\documentclass[preprint,1p,10pt]{elsarticle}
\usepackage{lineno,hyperref,latexsym,amsmath,amssymb,graphicx}
\usepackage{amsthm}
%\modulolinenumbers[5]

\journal{Linear Algebra and its Applications}
\usepackage[all,web]{xy}
\usepackage{color}
\usepackage{hyperref}
%\usepackage{changes}
%\definechangesauthor[name={Lea1}, color=orange]{Lea}

%\setremarkmarkup{(#2)}
\bibliographystyle{elsarticle-num}
%%%%%%%%%%%%%%%%%%%%% PER EVIDENZIARE TESI IN GIALLO

\usepackage{xcolor}

\def\xyellowspace{%
  \sbox0{\colorbox{yellow}{\strut\ }}%\mathbf{}
  \dimen0=\wd0\relax
  \hskip0pt\cleaders\box0\hskip\dimen0\hskip0pt}

\begingroup
\catcode`\ =\active%
\gdef\makeyellowspace{\let \xyellowspace\catcode`\ =\active}%
\endgroup

\def\?#1{\colorbox{yellow}{\strut#1}}

  %ESEMPIO \begin{evidenzia}
%\?{Testo} \?{evidenziato} %
%\end{evidenzia}

 %%%%%%%%%%%%%%%%%%%%%%%%%%%%%%%%%%%%%%

\def\urlfont{\DeclareFontFamily{OT1}{cmtt}{\hyphenchar\font='057}
              \normalfont\ttfamily \hyphenpenalty=10000}

\DeclareFontFamily{OT1}{rsfs10}{}
\DeclareFontShape{OT1}{rsfs10}{m}{n}{ <-> rsfs10 }{}
\DeclareMathAlphabet{\mathscript}{OT1}{rsfs10}{m}{n}

\DeclareMathOperator{\im}{Im}       % \im    = immagine (di una funzione)
       % \id    = funzione identita'
\DeclareMathOperator{\Spec}{Spec}   % \Spec  = Spec di un anello
   % \Proj  = Proj di un anello graduato
 % \Specm = Spec dei punti chiusi di un anello
\DeclareMathOperator{\Hom}{Hom}     % \Hom   = gruppo Hom (diritto)
\DeclareMathOperator{\Tors}{Tors}    % \Hom   = sottogruppo di torsione
     % \Ext   = gruppo Ext (diritto)
     % \Exc   = luogo eccezionale
 % \ext = fascio Ext (storto)
 % \hm = fascio Hom (storto)
\DeclareMathOperator{\Pic}{Pic}     % \Pic   = gruppo di Picard
\DeclareMathOperator{\Cl}{Cl}       % \Cl    = gruppo dei divisori di Weil mod. linear equivalence
   % \CaCl  = gruppo dei divisori di Cartier mod. linear equivalence
     % \Aut   = gruppo degli automorfismi
\DeclareMathOperator{\rk}{rk}       % \rk    = rango
   % \Sing  = luogo singolare
     % \Def   = spazio di Kuranishi
   % \Supp  = supporto
     % \Vol   = volume
\DeclareMathOperator{\Mov}{Mov}     % \Mov   = moving cone
     % \Nef   = cono dei divisori nef
  % \Relint   = interno di un cono nel suo span lineare
 % \codim = codimensione
 % \bideg = bigrado
\DeclareMathOperator{\diag}{diag}   % \diag  = matrice diagonale
  % \def   = dimensione dello spazio di Kuranishi
     % \cone  = cono su
\DeclareMathOperator{\fan}{fan}     % \fan   = fan generato da ...
   % \conv  = inviluppo convesso
     % \crk   = co-rank
   % \Hess  = Hessiam matrix
\DeclareMathOperator{\lcm}{lcm}     % \lcm   = minimo comun denominatore
\DeclareMathOperator{\HNF}{HNF}     % \HNF   = Hermite normal form
\DeclareMathOperator{\SNF}{SNF}     % \SNF   = Hermite normal form
     % \REF   = Hermite normal form
     % \Transv   = Transverse

\def \d{\delta }

\def \s{\sigma }

\def \Si{\Sigma }

\def \q{\mathbf{q}}

\def \u{\mathbf{u}}
\def \v{\mathbf{v}}
\def \n{\mathbf{n}}
\def \w{\mathbf{w}}
\def \t{\mathbf{t}}
\def \z{\mathbf{z}}
\def \1{\mathbf{1}}
\def \0{\mathbf{0}}

\def\P{{\mathbb{P}}}

\def\p2{\mathbb{P}^2}
\def\p3{\mathbb{P}^3}
\def\p4{\mathbb{P}^4}

\def\rk{\operatorname{rk}}

\def\GL{\operatorname{GL}}
\def\SL{\operatorname{SL}}

\def\Z{\mathbb{Z}}

\def\C{\mathbb{C}}
\def\R{\mathbb{R}}
\def\M{\mathbf{M}}
\def\Q{\mathbb{Q}}
\def\N{\mathbb{N}}

\def\SF{\mathcal{SF}}

\def\G{\mathcal{G}}

\begin{document}

\begin{frontmatter}

\title{$\Z$--linear Gale duality and\\ poly weighted spaces (PWS)}

\author[rvt]{Michele Rossi\corref{cor1}\fnref{fn1,fn2}}
 \ead{michele.rossi@unito.it}

 \author[rvt]{Lea Terracini\corref{cor1}\fnref{fn1}}
 \ead{lea.terracini@unito.it}

 \cortext[cor1]{Corresponding author}

 \address[rvt]{Dipartimento di Matematica, Universit\`a di Torino,
 via Carlo Alberto 10, 10123 Torino }

 \fntext[fn1]{The authors were partially supported by the MIUR-PRIN 2010-11 Research Funds ``Geo\-metria  delle Variet\`{a} Algebriche''.}
 \fntext[fn2]{The author is supported by the I.N.D.A.M. as a member of the G.N.S.A.G.A.}

\newtheorem{theorem}{Theorem}[section]
\newtheorem{proposition}[theorem]{Proposition}
\newtheorem{criterion}[theorem]{Criterion}
\newtheorem{thm-def}[theorem]{Theorem--Definition}
\newtheorem{corollary}[theorem]{Corollary}
\newtheorem{conjecture}[theorem]{Conjecture}
\newtheorem{lemma}[theorem]{Lemma}
\newtheorem{sublemma}[theorem]{Sub--lemma}
\newtheorem{remark}[theorem]{Remark}
\newtheorem{remarks}[theorem]{Remarks}
\newtheorem{example}[theorem]{Example}
\newtheorem{examples}[theorem]{Examples}
\newtheorem{definition}[theorem]{Definition}
\newtheorem{algorithm}[theorem]{Algorithm}

%\newtheorem*{step I}{Step I}
%\newtheorem*{step II}{Step II}
%\newtheorem*{step III}{Step III}
%\newtheorem*{step IV}{Step IV}
%\newtheorem*{conclusion}{Conclusion}
%\newtheorem*{acknowledgements}{Acknowledgements}

%%  spaziature   %%
%
\newcommand{\oneline}{\vskip12pt}
\newcommand{\halfline}{\vskip6pt}
%
%%%%%%%

\newcommand{\cy}{Ca\-la\-bi--Yau }
\newcommand{\ka}{K\"{a}hler }
\newcommand{\hookuparrow}{\cup\hskip-4.15pt{ }^{{}^{\textstyle\uparrow}}}
\newcommand{\mapsfrom}{\mathrel{\reflectbox{\ensuremath{\mapsto}}}}
\newcommand{\longmapsfrom}{\mathrel{\reflectbox{\ensuremath{\longmapsto}}}}

 %\pagestyle{empty}
 %\DefineParaStyle{Maple Heading 4}
 %\DefineParaStyle{Maple Heading 2}
 %\DefineParaStyle{Maple Text Output}
 %\DefineParaStyle{Maple Bullet Item}
 %\DefineParaStyle{Maple Warning}
 %\DefineParaStyle{Maple Error}
 %\DefineParaStyle{Maple Dash Item}
% \DefineParaStyle{Maple Heading 3}
 %\DefineParaStyle{Maple Heading 1}
 %\DefineParaStyle{Maple Title}
 %\DefineParaStyle{Maple Normal}
% \DefineCharStyle{Maple 2D Input}
% \DefineCharStyle{Maple Maple Input}
 %\DefineCharStyle{Maple 2D Output}
 %\DefineCharStyle{Maple 2D Math}
 %\DefineCharStyle{Maple Hyperlink}

\begin{abstract} The present paper is devoted to discussing Gale duality from the $\Z$--linear algebraic point of view. This allows us to isolate the class of $\Q$--factorial complete toric varieties whose class group is torsion free, here called \emph{poly weighted spaces} (PWS). It provides an interesting generalization of weighted projective spaces (WPS).
\end{abstract}

\begin{keyword} $\Q$-factorial complete toric varieties, Gale duality, weighted projective spaces, Hermite normal form, Smith normal form \\
\MSC[2010] 14M25, 06D50
\end{keyword}

\end{frontmatter}

%\linenumbers

\tableofcontents
\section*{Introduction}

The present paper is the first part of a longstanding tripartite
study aimed to realizing, for $\Q$--factorial
 projective toric varieties, a birational classification inspired by what V.~Batyrev did in 1991 for smooth complete toric varieties \cite{Batyrev91}. This paper is then devoted to giving the necessary algebraic and geometric background. In particular we generalize many properties of weighted projective spaces (WPS, for short), as studied in  \cite{RT-WPS}, to the case of $\Q$--factorial and complete toric varieties of Picard number (in the following called \emph{rank}) greater than 1.\\
 The main subject of this paper will be the  study of what we call \emph{$\Z$--linear Gale Duality} and its applications to the geometry of toric varieties. Gale duality (or \emph{Gale transform}) has the origin of its name to the article \cite{Gale}, by D.~Gale, devoted to the study of polytopes, although the geometry connected with Gale duality is actually much older. D.~Eisenbud and S.~Popescu in \cite[\S~1]{EP} so write: ``Perhaps the first result that belongs to the development of the Gale transform is the theorem of Pascal (from his ``Essay Pour Les Coniques'', from 1640,[...])''. Gale duality has found its main application in the classification of polytopes \cite[\S~4, \S~6]{Ewald} and probably for this reason it has been essentially developed on the real field $\R$. The pivotal paper in introducing Gale duality for toric varieties was \cite{Oda-Park}, where T.~Oda and H.S.~Park employed Gale duality to study the toric geometric implication of the \emph{Gel'fand--Kapranov--Zelevinsky decomposition} of the \emph{secondary polytope} of a convex polytope \cite{GKZ},\cite{GKZ2}. D.A.~Cox, J.B.~Little and H.K.~Schenck dedicated an entire chapter, of their comprehensive book on toric varieties, to the discussion of Gale duality and the secondary fan \cite[Ch. 15]{CLS}.\\
 \noindent Largely inspired by the latter, in this paper we consider Gale duality, as defined in \cite[\S~14.3]{CLS}, from the $\Z$--linear point of view (see \S~\ref{ssez:Gale}), which briefly is a duality, well defined up to left multiplication by a unimodular integer matrix, between a \emph{fan matrix} of a toric variety, defined by taking the primitive generators of all the fan's rays, and a \emph{weight matrix}: in the easiest case of a WPS, the latter is precisely the row vector of weights. In the more general case of a $\Q$--factorial complete toric variety, the weight matrix admits so many rows as the rank of the considered variety. Such a weight matrix should be compared with the \emph{combined weight systems} (CWS) of M.~Kreuzer and H.~Skarke \cite{KS}: we believe that the Gale duality gives an algebraic background in which a CWS could be thought of. \\
 For an extension of Gale duality to maps of finitely generated abelian groups see \cite{BCS}.\\

The core of this paper is  \S~\ref{sez:motivazioni}, which contains the motivations of our work and the main geometric results. It starts with a contextualization of the $\Z$--linear algebraic Gale duality in the toric geometric setup, allowing to associate to a  complete $\Q$-factorial toric variety $X$ a fan matrix $V$ and a weight matrix  $Q$. The first important geometric result is Theorem \ref{thm:pi11=Tors}, which gives a linear algebraic characterization of the torsion of the class group $\Cl(X)$ in terms of the fan matrix $V$ of $X$ (see Prop. \ref{prop:PWS}).  In particular we see that the class group $\Cl(X)$ is a free abelian group exactly when we can recover the fan matrix $V$ as a Gale dual of the weight matrix $Q$;
this gives a first motivation to isolate the class of complete $\Q$--factorial toric varieties $X$ such that $\Tors(\Cl(X))=0$, called \emph{poly weighted spaces} (PWS, for short) (see  Def.~\ref{def:PWS} and Prop. \ref{prop:PWS}). In Remark~\ref{rem:motivazioni} we list a number of motivations for such an appellation some of which can be briefly summarized as follows:
\begin{enumerate}
  \item[1.] for rank 1, a PWS is a WPS,
  \item[2.] the weight matrix of a PWS $X$ allows one to completely determine the action of $\Hom(\Cl(X),\C^*)$ describing $X$ as a Cox geometric quotient, just like the weight vector of a WPS does,
  \item[3.] Gale duality generalizes to higher rank the well known characterization of a fan matrix of a PWS by means of the weights, as described e.g. in \cite[Thm.~3]{RT-WPS}.
\end{enumerate}

Actually the main motivation to the study of PWS is given in \S~\ref{ssez:finale} and will be the subject of the forthcoming paper \cite{RT-QUOT}: namely the Batyrev--Cox and Conrads result \cite[Lemma~2.11]{BC}, \cite[Prop.~4.7]{Conrads} describing a $\Q$--factorial complete toric variety of rank 1 as a finite quotient of a WPS (in the literature called a \emph{fake WPS}), can be generalized to higher rank by replacing the covering WPS with a suitable PWS: therefore every $\Q$--factorial complete toric variety is a \emph{fake PWS} i.e. a finite quotient of a PWS.

\noindent An important result of \S~\ref{sez:motivazioni} is given by Theorem~\ref{thm:generazione}, which is the generalization, to higher rank, of \cite[Prop.~8]{RT-WPS} and exhibits the bases of the subgroup of Cartier divisors inside the free group of Weil divisors and of the Picard subgroup inside the class group, respectively, for every PWS. Example \ref{ex} gives an account of all the described techniques.\\
 Section \ref{sez:GaleDuality} is devoted to present the algebraic background.
\noindent When considered from the $\Z$--linear point of view, Gale duality turns out to relate properties of every submatrix of a fan matrix, defined by the choice of a subset of fan's generators, with properties of the \emph{complementary} submatrix, in the sense of (\ref{Gale_complementare}), of a weight matrix: here the main results are Theorem~\ref{thm:Gale2} and Corollary \ref{cor:Gale-on-det}. As an example of an application of these results, one can control the singularities of a given projective $\Q$--factorial toric variety $X$, by looking at a suitable class of simplicial cones in $\Cl(X)\otimes \R$ containing the K\"{a}hler cone of $X$ (this particular aspect will be deeply studied and applied in the forthcoming paper \cite{RTclass}). In section \ref{sez:GaleDuality}, $\Z$--linear Gale duality will be studied from the pure linear algebraic point of view, without any reference to the geometric properties of the underlying toric varieties, although speaking about \emph{$F$--matrices} and \emph{$W$--matrices} (see \S~\ref{ssez:F&W}) we are clearly referring to possible fan matrices and weight matrices, respectively. Further significant results of \S~\ref{sez:GaleDuality} are
\begin{itemize}
  \item Theorem \ref{thm:Wpositiveness} stating the equivalence, via Gale duality, between \emph{$F$--complete matrices} and \emph{$W$--positive matrices} (see Definition \ref{def:Fcompletezza&Wpositività}), geometrically meaning that a PWS is characterized by the  choice of a positive weight matrix,
  \item Theorem \ref{thm:riduzione} generalizing to any rank the well known reduction process for the weights of a WPS.  It gives an easy interpretation by Gale duality: briefly a weight matrix is \emph{reduced} if and only if a (and every) Gale dual matrix of it is a matrix whose columns admit only coprime entries;
  \item Theorem \ref{thm:REF} whose geometric meaning is that a weight matrix of a complete $\Q$--factorial toric variety can always be assumed to be a positive matrix in row echelon form.
\end{itemize}

Section \ref{sez:proofs} contains the proofs of results presented in Section \ref{sez:motivazioni}. It also provides some constructive techniques for dealing with fan and weight matrices. For example
 Proposition~\ref{prop:QdaVeviceversa} gives a procedural recipe for Gale duality: a Gale dual matrix $Q$ of a fan matrix $V$ can be recovered by the bottom rows of the switching matrix giving the \emph{Hermite Normal Form} ($\HNF$, for short) of the transpose matrix $V^T$. It is the generalization to higher rank of \cite[Prop.~5]{RT-WPS}. \\

\section{Preliminaries and notation}\label{sez:preliminari}

\subsection{Toric varieties}

 A \emph{$n$--dimensional toric variety} is an algebraic normal variety $X$ containing the \emph{torus} $T:=(\C^*)^n$ as a Zariski open subset such that the natural multiplicative self--action of the torus can be extended to an action $T\times X\rightarrow X$.

Let us quickly recall the classical approach to toric varieties by means of \emph{cones} and \emph{fans}. For proofs and details the interested reader is referred to the extensive treatments \cite{Danilov}, \cite{Fulton}, \cite{Oda} and the recent and quite comprehensive \cite{CLS}.

\noindent As usual $M$ denotes the \emph{group of characters} $\chi : T \to \C^*$ of $T$ and $N$ the \emph{group of 1--parameter subgroups} $\lambda : \C^* \to T$. It follows that $M$ and $N$ are $n$--dimensional dual lattices via the pairing
\begin{equation*}
\begin{array}{ccc}
M\times N & \longrightarrow & \Hom(\C^*,\C^*)\cong\C^*\\
 \left( \chi,\lambda \right) & \longmapsto
& \chi\circ\lambda
\end{array}
\end{equation*}
which translates into the standard paring $\langle u,v\rangle=\sum u_i v_i$ under the identifications $M\cong\Z^n\cong N$ obtained by setting $\chi(\t)=\t^{\u}:=\prod t_i^{u_i}$ and $\lambda(t)=t^{\v}:=(t^{v_1},\ldots,t^{v_n})$.

\subsubsection{Cones and affine toric varieties}

Define $N_{\R}:=N\otimes \R$ and $M_{\R}:=M\otimes\R\cong \Hom(N,\Z)\otimes\R \cong \Hom(N_{\R},\R)$.

\noindent A \emph{convex polyhedral cone} (or simply a \emph{cone}) $\sigma$ is the subset of $N_{\R}$ defined by
\begin{equation*}
    \sigma = \langle \v_1,\ldots,\v_s\rangle:=\{ r_1 \v_1 + \dots + r_s \v_s \in N_{\R} \mid r_i\in\R_{\geq 0} \}
\end{equation*}
The $s$ vectors $\v_1,\ldots,\v_s\in N_{\R}$ are said \emph{to generate} $\sigma$. A cone $\s=\langle \v_1,\ldots,\v_s\rangle$ is called \emph{rational} if $\v_1,\ldots,\v_s\in N$, \emph{simplicial} if $\v_1,\ldots,\v_s$ are $\R$--linear independent and \emph{non-singular} if $\v_1,\ldots,\v_s$ can be extended by $n-s$ further elements of $N$ to give a basis of the lattice $N$. A 1--generated cone $\rho=\langle\v\rangle$ is also called a \emph{ray}.

\noindent A cone $\s$ is called \emph{strictly convex} if it does not contain a linear subspace of positive dimension of $N_{\R}$. The linear span of a cone $\s$ will be denoted by $\mathcal{L}(\s)$ and by definition $\dim(\s):=\dim(\mathcal{L}(\s))$.  For a $n$--generated cone $\s=\langle \v_1,\ldots,\v_n\rangle$ we will set
$$ \det(\s):=|\det(\n_1,\ldots,\n_n)|$$
where $\n_i$ is the generator of the monoid $\langle\v_i\rangle\cap N$. A $n$--generated cone $\s$ will be called \emph{unimodular} when $\det(\s)=1$. Then a $n$--generated cone is simplicial if and only if its determinant is non zero and it is non-singular if and only if it is unimodular.

\noindent The \emph{dual cone $\s^{\vee}$ of $\s$} is the subset of $M_{\R}$ defined by
\begin{equation*}
    \sigma^{\vee} = \{ \u \in M_{\R} \mid \forall\ \v \in \sigma \quad \langle \u, \v \rangle \ge 0 \}
\end{equation*}
A \emph{face $\tau$ of $\s$} (denoted by $\tau <\s$) is the subset defined by
\begin{equation*}
    \tau = \sigma \cap \u^{\bot} = \{\v \in \sigma \mid \langle \u, \v \rangle = 0 \}
\end{equation*}
for some $\u\in \sigma ^{\vee}$. Observe that also $\tau$ is a cone.

\noindent Gordon's Lemma (see \cite{Fulton} \S~1.2, Proposition 1) ensures that the semigroup $S_{\s}:=\s^{\vee}\cap M$ is \emph{finitely generated}. Then also the associated $\C$--algebra $A_{\s}:=\C[S_{\s}]$ is finitely generated. A choice of $r$ generators gives a presentation of $A_{\s}$
\begin{equation*}
    A_{\s}\cong \C[X_1,\dots,X_r]/I_{\s}
\end{equation*}
where $I_{\s}$ is the ideal generated by the relations between generators. Then
\begin{equation*}
    U_{\s}:=\mathcal{V}(I_{\s})\subset\C^r
\end{equation*}
turns out to be an \emph{affine toric variety}. In other terms an affine toric variety is given by $U_{\s}:=\Spec(A_{\s})$. Since a closed point $x\in U_{\s}$ is an evaluation of elements in $\C[S_{\s}]$ satisfying the relations generating $I_{\s}$, then it can be identified with a semigroup morphism $x:S_{\s}\rightarrow\C$ assigned by thinking of $\C$ as a multiplicative semigroup. In particular the \emph{characteristic morphism}
\begin{equation}\label{caratteristico}
\begin{array}{cccc}
x_{\s}&:\s^{\vee}\cap M & \longrightarrow & \C\\
      &      \u & \longmapsto & \left\{\begin{array}{cc}
                                         1 & \text{if $\u\in\s^{\bot}$} \\
                                         0 & \text{otherwise}
                                       \end{array}
      \right.
\end{array}
\end{equation}
which is well defined since $\s^{\bot}<\s^{\vee}$, defines a \emph{characteristic point} $x_{\s}\in U_{\s}$ whose torus orbit $O_{\s}$ turns out to be a $(n-\dim(\s))$--dimensional torus embedded in $U_{\s}$ (see e.g. \cite{Fulton} \S~3).

\subsubsection{Fans and toric varieties}

A \emph{fan} $\Si$ is a finite set of cones $\s\subset N_{\R}$ such that
\begin{enumerate}
  \item for any cone $\s\in\Si$ and for any face $\tau<\s$ then $\tau\in\Si$,
  \item for any $\s,\tau\in\Si$ then $\s\cap\tau<\s$ and $\s\cap\tau<\tau$.
\end{enumerate}
For every $i$ with $0\leq i\leq n$ denote by $\Si(i)\subset \Si$ the subset of $i$--dimensional cones, called the \emph{$i$--skeleton of $\Si$}. A fan $\Si$ is called \emph{simplicial} if every cone $\s\in\Si$ is simplicial and \emph{non-singular} if every such cone is non-singular. The \emph{support} of a fan $\Si$ is the subset $|\Si|\subset N_{\R}$ obtained as the union of all of its cones i.e.
\begin{equation*}
    |\Si|:= \bigcup_{\s\in\Si} \s \subset N_{\R}\ .
\end{equation*}
If $|\Si|=N_{\R}$ then $\Si$ will be called \emph{complete} or \emph{compact}.

Since for any face $\tau <\s$ the semigroup $S_{\s}$ turns out to be a sub-semigroup of $S_{\tau}$, there is an induced immersion $U_{\tau}\hookrightarrow U_{\s}$ between the associated affine toric varieties which embeds $U_{\tau}$ as a principal open subset of $U_{\s}$. Given a fan $\Si$ one can construct \emph{an associated toric variety $X(\Si)$} by patching all the affine toric varieties $\{U_{\s}\ |\ \s\in\Si \}$ along the principal open subsets associated with any common face. Moreover \emph{for every toric variety $X$ there exists a fan $\Si$ such that $X\cong X(\Si)$} (see \cite{Oda} Theorem 1.5). It turns out that (\cite{Oda} Theorems 1.10 and 1.11; \cite{Fulton} \S~2):
\begin{itemize}
  \item \emph{$X(\Si)$ is non-singular if and only if the fan $\Si$ is non-singular,}
  \item \emph{$X(\Si)$ is complete if and only if the fan $\Si$ is complete.}
\end{itemize}

 Let us now introduce some non-standard notation.

  \begin{definition}\label{def:1-gen-fan} A \emph{$1$--generated rational fan} (\emph{$1$--fan} for short) $\Si$ is a fan whose 1--skeleton is given by a set $\Sigma(1)=\{\langle\v_1\rangle,\ldots,\langle\v_s\rangle\}\subset N_{\R}$ of rational rays and whose cones $\s\subset N\otimes\R$ are generated by \emph{any} proper subset of $\Sigma(1)$. When it makes sense, we will write
 \begin{equation}\label{def:genfan}
     \Si=\fan(\mathbf{v}_1,\ldots,\mathbf{v}_s)\ .
 \end{equation}
 \end{definition}

 \begin{examples}\label{ex:1-gen-fan} \begin{enumerate}
                    \item Given $$\v_1=\left(
                                         \begin{array}{c}
                                           1 \\
                                           0 \\
                                         \end{array}
                                       \right), \v_2=\left(
                                         \begin{array}{c}
                                           0 \\
                                           1 \\
                                         \end{array}
                                       \right),\v_3=\left(
                                         \begin{array}{c}
                                           -1 \\
                                           -1 \\
                                         \end{array}
                                       \right)\in N=\Z^2
                    $$then $\fan(\v_1,\v_2,\v_3)$ is well defined and gives a fan of $\P^2$.
                    \item Given $$\v_1=\left(
                                         \begin{array}{c}
                                           1 \\
                                           0 \\
                                         \end{array}
                                       \right), \v_2=\left(
                                         \begin{array}{c}
                                           0 \\
                                           1 \\
                                         \end{array}
                                       \right),\v_3=\left(
                                         \begin{array}{c}
                                           1 \\
                                           1 \\
                                         \end{array}
                                       \right)\in N=\Z^2
                    $$then $\fan(\v_1,\v_2,\v_3)$ is not well defined.
                  \end{enumerate}
 \end{examples}Let us now introduce some non-standard notation.

\begin{definition}\label{def:1-simp-gen-fan} A \emph{rational simplicial fan} $\Si$ is a fan whose 1--skeleton is given by a set $\Sigma(1)=\{\langle\v_1\rangle,\ldots,\langle\v_s\rangle\}\subset N_{\R}$ of rational rays and whose cones $\s\subset N\otimes\R$ are generated by \emph{a suitable choice of} proper subsets of $\Sigma(1)$, such that all the chosen subsets generate simplicial cones. In general, for a given set of generators these fans are not unique and $\mathcal{SF}(\v_1,\ldots,\v_s)$ will denote the set of rational simplicial fans whose 1--skeleton is given by $\Sigma(1)=\{\langle\v_1\rangle,\ldots,\langle\v_s\rangle\}\subset N_{\R}$ and whose support is $|\Si|=\langle\v_1,\ldots,\v_s\rangle$. The matrix $V=(\v_1,\ldots,\v_s)$ will be called \emph{a fan matrix} for every fan in $\SF(V):=\mathcal{SF}(\v_1,\ldots,\v_s)$: it is determined up to permutations of the $s$ generators. A fan matrix $V$ is called \emph{reduced} if any column $\v_i$ of $V$ is the generator of the monoid $\langle\v_i\rangle\cap N$.
\end{definition}

\begin{examples} \label{ex:SF}\begin{enumerate}
                   \item Consider $\{\v_1,\v_2,\v_3\}\subset N=\Z^2$ like in Example \ref{ex:1-gen-fan} (1). Then $\mathcal{SF}(\v_1,\v_2,\v_3)=\{\fan(\v_1,\v_2,\v_3)\}$.
                   \item Consider $\{\v_1,\v_2,\v_3\}\subset N=\Z^2$ like in Example \ref{ex:1-gen-fan} (2). Then $$\mathcal{SF}(\v_1,\v_2,\v_3)=\{\Si_1=\{\langle\v_1,\v_3\rangle,\langle\v_2,\v_3\rangle, \langle\v_1\rangle,\langle\v_2\rangle,\langle\v_3\rangle,\langle 0 \rangle\}\}\ .$$
                   \item Given $$\v_1=\left(
                                        \begin{array}{c}
                                          1 \\
                                          0 \\
                                          0 \\
                                        \end{array}
                                      \right), \v_2=\left(
                                        \begin{array}{c}
                                          0 \\
                                          1 \\
                                          0 \\
                                        \end{array}
                                      \right),\v_3=\left(
                                        \begin{array}{c}
                                          1 \\
                                          0 \\
                                          1 \\
                                        \end{array}
                                      \right), \v_4=\left(
                                        \begin{array}{c}
                                          0 \\
                                          1 \\
                                          1 \\
                                        \end{array}
                                      \right)\in N=\Z^3
                   \ .$$Then $\mathcal{SF}(\v_1,\v_2,\v_3,\v_4)=\{\Si_1,\Si_2\}$ with
                   \begin{eqnarray*}
                   % \nonumber to remove numbering (before each equation)
                     \Si_1 &=& \{\langle\v_1,\v_2,\v_3\rangle,\langle\v_2,\v_3,\v_4\rangle,\langle\v_1,\v_2\rangle,
                   \langle\v_1,\v_3\rangle,\langle\v_2,\v_3\rangle,\langle\v_2,\v_4\rangle,\langle\v_3,\v_4\rangle,\\
                   &&\langle\v_1\rangle,\langle\v_2\rangle,\langle\v_3\rangle,\langle\v_4\rangle,\langle 0\rangle\} \\
                     \Si_2 &=&  \{\langle\v_1,\v_2,\v_4\rangle,\langle\v_1,\v_3,\v_4\rangle,\langle\v_1,\v_2\rangle,
                   \langle\v_1,\v_3\rangle,\langle\v_1,\v_4\rangle,\langle\v_2,\v_4\rangle,\langle\v_3,\v_4\rangle,\\
                   &&\langle\v_1\rangle,\langle\v_2\rangle,\langle\v_3\rangle,\langle\v_4\rangle,\langle 0\rangle\}
                   \end{eqnarray*}
                   giving two simplicial fans obtained by putting a diagonal facet inside the non-simplicial cone $\langle\v_1,\v_2,\v_3,\v_4\rangle$.
                 \end{enumerate}
\end{examples}

\begin{definition}\label{def:dd} If the set of generators $\{\v_1,\ldots,\v_s\}\subset N$ is such that the cardinality $|\mathcal{SF}(\v_1,\ldots,\v_s)|=1$, then the unique element $\Si\in\mathcal{SF}(\v_1,\ldots,\v_s)$ is called a \emph{$1$--detected fan} and the associated normal toric variety $X(\Si)$ is said a \emph{divisorially detected variety} (or simply \emph{dd-variety}).
\end{definition}

\begin{remarks}\label{rems:SF} Some immediate observations are the following.
\begin{enumerate}
  \item If $s\leq n=\rk N$ and $\v_1,\ldots,\v_s$ are $\R$-linearly independent then $$|\mathcal{SF}(\v_1,\ldots,\v_s)|=1$$
  since its unique $1$--detected fan is given by the $1$--fan $\fan(\v_1,\ldots,\v_s)$.
  \item Clearly a $1$--fan is always a $1$--detected fan, but the converse is false: recall Example \ref{ex:SF} (2).
  \item If $n=2$ and the fan matrix $V=(\v_1,\ldots,\v_s)$ has maximum rank 2, then $$|\mathcal{SF}(\v_1,\ldots,\v_s)|=1 \hbox{ for any } s\geq 2.$$ In fact for $s=2$ we are in the previous case; for $s\geq 3$, construct the 1--detected fan $\Si\in\mathcal{SF}(\v_1,\ldots,\v_s)$ by considering all the rank 2 sub-matrices $(\v_i,\v_j)$ of $V$ such that every $\v_k$, with $k\neq i,j$, is not in the interior of the cone $\s_{ij}:=\langle\v_i,\v_j\rangle$. Assume all the $\s_{ij}$ to be the maximal cones of $\Si$ and the remaining cones of $\Si$ to be given by all their faces.
  \item As a consequence of the previous fact, we get the non surprising assertion that \emph{every 2-dimensional normal and $\Q$--factorial toric variety is a dd variety}.
\end{enumerate}
\end{remarks}

\subsection{Normal forms for matrices}

Let us firstly recall the following:
\begin{definition}
A matrix is in {\em row echelon form}  if
\begin{itemize}
\item All nonzero rows  are above any rows of all zeroes (all zero rows, if any, lying on the bottom of the matrix);
\item the first nonzero entry from the left of a nonzero row is always strictly on the right of the first nonzero entry of the previous row.
\end{itemize}
\end{definition}
\subsubsection{Hermite normal form}\label{HNF}
Hermite normal form is a particular case of row e\-che\-lon form.\\
It is well known that Hermite algorithm provides an effective way to determine a basis of a subgroup of $\mathbb{Z}^n$. We briefly recall the definition and the main properties.
For details, see for example \cite{Cohen}.

\begin{definition}\label{def:HNF}
An $m\times n$ matrix $M=(m_{ij})$ with integral coefficients is in \emph{Hermite normal form} (abbreviated $\HNF$) if there exists $r\leq m$ and a strictly increasing map $f:\{1,\ldots,r\} \to \{1,\ldots,n\}$ satisfying the following properties:
\begin{enumerate}
\item For $1\leq i\leq r$, $m_{i,f(i)}\geq 1$, $m_{ij}=0$ if $j<f(i)$ and $0\leq m_{i,f(k)}< m_{k,f(k)}$ if $i<k$.
\item The last $m-r$ rows of $M$ are equal to $0$.
\end{enumerate}
\end{definition}

\begin{theorem}[\cite{Cohen} Theorem 2.4.3]\label{thm:cohen}
Let $A$ be an $m\times n$ matrix with coefficients in $\mathbb{Z}$. Then there exists a unique $m\times n$ matrix $B=(b_{ij})$ in $\HNF$ of the form $B=U\cdot A$ where $U\in\GL_m(\mathbb{Z})$.
\end{theorem}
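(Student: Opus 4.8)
The plan is to prove existence and uniqueness separately; existence is an algorithmic (Euclidean) reduction, while uniqueness is the genuinely delicate point.

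For \emph{existence}, I would use that $\GL_m(\mathbb{Z})$ is generated by the elementary integer row operations --- swapping two rows, adding an integer multiple of one row to another, and changing the sign of a row --- so that producing $B=UA$ amounts to transforming $A$ into $\HNF$ by a finite sequence of such operations. First, a forward pass: let $f(1)$ be the leftmost nonzero column; applying the Euclidean algorithm to the entries of this column, realized entirely by adding integer multiples of one row to another together with swaps and sign changes, I can reduce that column to a single positive entry equal to the gcd of its entries, placed in the top row, with zeros below. Deleting the first row and the columns up to $f(1)$ and iterating produces a matrix with $m_{i,f(i)}\geq 1$, zeros to the left of each pivot, strictly increasing pivot columns $f(1)<\cdots<f(r)$, and the last $m-r$ rows equal to $0$ --- i.e. condition (2) and the first part of (1). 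A second, back-substitution pass then reduces the entries above the pivots: for each row $i$ and each $k>i$ I subtract a suitable integer multiple of row $k$ to force $0\leq m_{i,f(k)}<m_{k,f(k)}$, processing the pivot columns of row $i$ from left to right so that each correction only affects columns strictly to the right and never spoils an already-reduced pivot column.

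For \emph{uniqueness}, I would first reduce to a statement about lattices. If $B=UA$ and $B'=U'A$ are two $\HNF$ of $A$, then the nonzero rows of each generate the same subgroup $L\subseteq\mathbb{Z}^n$, namely the row lattice of $A$ (since $U,U'\in\GL_m(\mathbb{Z})$ identify the three row lattices); in particular both have the same number $r=\operatorname{rank}L$ of nonzero rows, and the $m-r$ zero rows agree trivially. It then suffices to show that the $\HNF$ is determined by $L$ alone.

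The crux is to recover the pivot data, and then the remaining entries, intrinsically from $L$. Using the coordinate filtration $F_j:=\langle e_j,\ldots,e_n\rangle$ and $L_j:=L\cap F_j$, the quotient $L_j/L_{j+1}$ embeds into $F_j/F_{j+1}\cong\mathbb{Z}$ via the $j$-th coordinate, hence is either $0$ or generated by a unique positive integer; the columns $j$ where it is nonzero are exactly the pivot columns $f(1)<\cdots<f(r)$ and the generators are exactly the pivot values $m_{i,f(i)}$, so these are invariants of $L$. One also checks that $L_j$ is generated by those rows $b_k$ with $f(k)\geq j$, since in any $\mathbb{Z}$-combination of the rows the smallest contributing index determines the leading position (the pivots being strictly increasing prevents cancellation there). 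Finally I would prove $b_i=b'_i$ by downward induction on $i$: the difference $d:=b_i-b'_i$ lies in $L_{f(i)+1}$, hence $d=\sum_{k>i}a_k b_k$; evaluating at the column $f(k_0)$ for the smallest index $k_0$ with $a_{k_0}\neq 0$, every $b_k$ with $k>k_0$ vanishes there while $b_{k_0}$ contributes its pivot value, giving $d_{f(k_0)}=a_{k_0}m_{k_0,f(k_0)}$, a nonzero multiple of the pivot; but the reduction inequalities force $d_{f(k_0)}=m_{i,f(k_0)}-m'_{i,f(k_0)}\in(-m_{k_0,f(k_0)},m_{k_0,f(k_0)})$, a contradiction, so $d=0$. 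I expect this last step --- extracting the off-pivot entries and seeing that the strict inequalities $0\leq m_{i,f(k)}<m_{k,f(k)}$ leave no freedom --- to be the main obstacle, exactly as the uniqueness of quotient and remainder is the heart of Euclidean division.
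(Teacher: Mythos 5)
The paper does not prove this statement: it is quoted verbatim from Cohen's book (Theorem 2.4.3 there), with existence delegated to \cite[Algorithm 2.4.4]{Cohen}, so there is no in-paper argument to compare against. Your proof is correct and is essentially the standard one: the Euclidean forward pass plus back-substitution for existence, and for uniqueness the reduction to the row lattice $L$, the recovery of the pivot columns and pivot values from the filtration $L_j=L\cap\langle e_j,\ldots,e_n\rangle$, and the final ``remainder'' argument showing that $b_i-b'_i\in L_{f(i)+1}$ forces $b_i=b'_i$ via the inequalities $0\leq m_{i,f(k)}<m_{k,f(k)}$. All the steps check out (in particular $L_j$ is indeed spanned by the rows with $f(k)\geq j$ because the strictly increasing pivots prevent cancellation at the leading position, and the pivot values of $B$ and $B'$ agree because they are invariants of $L$, which is what makes the final interval argument close). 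The only cosmetic remark is that your last step needs no downward induction --- it works for each $i$ independently once the pivot data are known to be invariants of $L$.
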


We will refer to matrix $B$ as the $\HNF$ of matrix $A$. The construction of $B$ and $U$ is effective, see \cite[Algorithm 2.4.4]{Cohen}, based on Eulid's algorithm for greatest common divisor.

\begin{proposition}[\cite{Cohen},\S~2.4.3]\label{prop:HNFuno}
\begin{enumerate}
\item Let $\mathcal{L}$ be a subgroup of $\mathbb{Z}^n$, $V=\{{\v}_1,\ldots,{\v}_m\}$ a set of generators, and let $A$ be the $m\times n$ matrix having ${\v}_1,\ldots,{\v}_m$ as rows. Let $B$ be the $\HNF$ of $A$. Then the non zero rows of $B$ are a basis of $\mathcal{L}$.
\item Let $A$ be a $m\times n$ matrix, and let $B=U\cdot A^T$ be the $\HNF$ of the transposed of $A$, and let $r$ such that the first $r$ rows of $B$ are non zero. Then a $\mathbb{Z}$-basis for the kernel of $A$ is given by the last $m-r$ rows of $U$.
\end{enumerate}
\end{proposition}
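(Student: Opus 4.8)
The plan is to derive both parts from one principle: left multiplication by a matrix $U\in\GL(\Z)$ is a $\Z$-linear automorphism of the relevant lattice of rows, so it preserves the subgroup they generate while merely replacing one set of generators by another. Part (1) is then the statement that the $\HNF$ exhibits a particularly good such set of generators, and part (2) is the dual computation applied to $A^T$, where the vanishing rows of the $\HNF$ read off the kernel.

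For part (1), first I would record that since $U\in\GL_m(\Z)$ is invertible over $\Z$, the rows of $B=UA$ are integral combinations of the rows of $A$, and conversely the rows of $A=U^{-1}B$ are integral combinations of the rows of $B$; hence the $\Z$-span of the rows of $B$ equals the $\Z$-span of the rows of $A$, namely $\mathcal{L}$. Discarding the zero rows changes nothing, so the $r$ nonzero rows $b_1,\dots,b_r$ already generate $\mathcal{L}$. It then remains to prove they are $\Z$-linearly independent, and here I would use the staircase shape imposed by Definition~\ref{def:HNF}: the leading entry of $b_i$ sits in column $f(i)$ with $m_{i,f(i)}\geq 1$, and $f$ is strictly increasing. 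Given a relation $\sum_{i=1}^r c_ib_i=0$, let $i_0$ be the least index with $c_{i_0}\neq 0$; reading off the entry in column $f(i_0)$ and using $m_{i,f(i_0)}=0$ for every $i>i_0$ (because $f(i)>f(i_0)$) leaves only $c_{i_0}m_{i_0,f(i_0)}=0$, forcing $c_{i_0}=0$, a contradiction. Thus $b_1,\dots,b_r$ are independent and form a basis of $\mathcal{L}$.

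For part (2) I would apply the same mechanism to $A^T$. Writing $B=UA^T$ with $U\in\GL_n(\Z)$ and denoting by $u_1,\dots,u_n$ the rows of $U$, the $i$-th row of $B$ is $u_iA^T$; for every index $i$ with $b_i=0$ one gets $u_iA^T=0$, i.e. $Au_i^T=0$, so each such $u_i^T$ lies in $\ker A=\{x\in\Z^n\mid Ax=0\}$. These vectors are automatically $\Z$-linearly independent, being part of the row set of the invertible matrix $U$. The key remaining point — and the one I expect to be the main obstacle — is to show that they span the whole kernel and not merely a finite-index sublattice. For this I would exploit that $x\mapsto (U^{-1})^Tx$ is a bijection of $\Z^n$: given $x\in\ker A$, write $x^T=w^TU$ with $w=(U^{-1})^Tx\in\Z^n$, so that $x^TA^T=w^TB$. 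Since $Ax=0$ is equivalent to $w^TB=0$, and the nonzero rows of $B$ are independent by exactly the staircase argument of part (1), $w^TB=0$ forces the coordinates of $w$ indexed by the pivot rows to vanish; hence $x^T=w^TU$ is an integral combination of precisely those rows of $U$ that correspond to the vanishing rows of $B$. This proves that these rows (transposed) form a $\Z$-basis of $\ker A$, their number matching $n-\rk A$.
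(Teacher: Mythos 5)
Your proof is correct and complete: part (1) via invariance of the row span under $\GL_m(\Z)$ plus the staircase independence argument, and part (2) by transporting an arbitrary kernel element through $x^T=w^TU$ and killing the pivot coordinates of $w$. Note that the paper itself gives no proof of this proposition --- it is quoted from Cohen, \S~2.4.3 --- so there is nothing to compare against; your argument is the standard one. One small point worth flagging: since $U\in\GL_n(\Z)$ here, the basis of $\ker A$ consists of the last $n-r$ rows of $U$ (as your final count $n-\rk A$ correctly shows), so the ``$m-r$'' in the statement is a slip of indices, harmless in the paper's applications where it is applied to $V^T$ with the roles of $m$ and $n$ interchanged.
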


\begin{remark}\label{rem:HNFestesa}
The Hermite Normal Form can be defined also for $m \times n$ matrices with rational coefficients. Given such a matrix, one simply multiplies it by a common multiple $D$ of denominators of the entries, compute the $\HNF$ and then divides by $D$.\\
\end{remark}
\subsubsection{Dual discrete subgroups in $\Q^n$}\hfill
\begin{definition}
Given a discrete subgroup $\mathcal{L}$ of $\Q^n$, let $\mathcal{L}_{\Q}$ be the $\Q$--span of $\mathcal{L}$ in $\Q^n$. The \emph{dual subgroup} of $\mathcal{L}$ is the discrete subgroup defined by
$${\mathcal{L}}^*=\{\mathbf{x}\in \mathcal{L}_{\Q}\ |\ \mathbf{x}\cdot\mathbf{y}\in\Z, \forall \mathbf{y}\in \mathcal{L}\}\subseteq \Q^n$$
\end{definition}
\begin{definition} If $A$ is a $m\times n$ matrix of rank $m$,  we will call the matrix
$$A^*:=(A\cdot A^T)^{-1}\cdot A$$
the \emph{transverse} of $A$; it is also called the \emph{contragredient matrix} by some authors. Notice that $A\cdot A^T$ is a non-singular square matrix if $A$ is a $m\times n$ matrix of rank $m$; therefore $A^*$ is well-defined.

\noindent In particular, when $A$ is a non-singular square matrix,  $A^*=(A^T)^{-1}$ (see \cite[\S~1.3]{RT-WPS}).
\end{definition}
The following facts are well-known:
\begin{proposition}\label{prop:duallattice}
Let $\mathcal{L}, \mathcal{L}_1,\mathcal{L}_2$ be discrete subgroup of $\Q^n$.
\begin{enumerate}
\item Let $m$ be the rank of $\mathcal{L}$, and $B$ be the $m\times n$ matrix having on the rows a basis of $\mathcal{L}$. Then a $\Z$-basis of ${\mathcal{L}}^*$ is given by the rows of the transverse matrix $B^*$ of $B$.
\item $\mathcal{L}^{**}=\mathcal{L}$.
\item $(\mathcal{L}_1\cap \mathcal{L}_2)^*$ is generated by $\mathcal{L}_1^*$ and $\mathcal{L}_2^*$.
\item $\mathcal{L}_1\cap \mathcal{L}_2$ is the dual of the group generated by $\mathcal{L}_1^*$ and  $\mathcal{L}_2^*$.
\end{enumerate}
\end{proposition}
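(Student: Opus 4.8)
The plan is to make Part 1 the workhorse and derive everything else from it. For Part 1, write $b_1,\dots,b_m$ for the rows of $B$ and set $S:=BB^{T}$, which is an invertible $m\times m$ matrix because $B$ has rank $m$; thus $B^{*}=S^{-1}B$ is well defined and its rows $b^{*}_1,\dots,b^{*}_m$ lie in $\mathcal{L}_{\Q}$, the row space of $B$. The crux is the dual-basis identity
\[
B^{*}B^{T}=S^{-1}BB^{T}=I_m ,
\]
i.e. $b^{*}_i\cdot b_j=\delta_{ij}$, so $\{b^{*}_i\}$ is the basis of $\mathcal{L}_{\Q}$ dual to $\{b_j\}$ for the standard pairing. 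From this I would read off both required facts: first, each $b^{*}_i$ pairs integrally with every generator $b_j$, hence with all of $\mathcal{L}$, so $b^{*}_i\in\mathcal{L}^{*}$; second, given an arbitrary $\mathbf{x}\in\mathcal{L}^{*}\subseteq\mathcal{L}_{\Q}$, I expand $\mathbf{x}=\sum_i c_i\,b^{*}_i$ with $c_i\in\Q$ and note that $c_j=\mathbf{x}\cdot b_j\in\Z$, forcing $\mathbf{x}$ to be an integral combination of the $b^{*}_i$. Hence the rows of $B^{*}$ form a $\Z$-basis of $\mathcal{L}^{*}$.

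For Part 2 I would simply iterate the transverse operation at the matrix level. Since $S=BB^{T}$ is symmetric one computes
\[
B^{*}(B^{*})^{T}=S^{-1}BB^{T}S^{-1}=S^{-1},
\]
so that $(B^{*})^{*}=\big(B^{*}(B^{*})^{T}\big)^{-1}B^{*}=S\,S^{-1}B=B$. By Part 1 a $\Z$-basis of $\mathcal{L}^{*}$ is given by the rows of $B^{*}$; applying Part 1 once more to $\mathcal{L}^{*}$ shows that $\mathcal{L}^{**}$ admits the rows of $(B^{*})^{*}=B$ as a $\Z$-basis, and as these are exactly a basis of $\mathcal{L}$ we conclude $\mathcal{L}^{**}=\mathcal{L}$.

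Parts 3 and 4 I would obtain together from the elementary duality lemma that, for lattices $A,B$ sharing the same $\Q$-span, $\langle A,B\rangle^{*}=A^{*}\cap B^{*}$: indeed $\mathbf{x}$ pairs integrally with every element of the subgroup generated by $A$ and $B$ precisely when it does so with every element of $A$ and of $B$ separately. Applying this to $A=\mathcal{L}_1^{*}$, $B=\mathcal{L}_2^{*}$ and using $\mathcal{L}_i^{**}=\mathcal{L}_i$ from Part 2 gives
\[
\langle\mathcal{L}_1^{*},\mathcal{L}_2^{*}\rangle^{*}=\mathcal{L}_1^{**}\cap\mathcal{L}_2^{**}=\mathcal{L}_1\cap\mathcal{L}_2 ,
\]
which is exactly Part 4. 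Dualizing this identity and invoking Part 2 once more, $(\mathcal{L}_1\cap\mathcal{L}_2)^{*}=\langle\mathcal{L}_1^{*},\mathcal{L}_2^{*}\rangle^{**}=\langle\mathcal{L}_1^{*},\mathcal{L}_2^{*}\rangle$, which is Part 3.

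The delicate point, and the one where the care must go, is precisely the hypothesis hidden in that duality lemma: the identity $\langle A,B\rangle^{*}=A^{*}\cap B^{*}$ forces $A_{\Q}+B_{\Q}=A_{\Q}\cap B_{\Q}$, so it can hold only when $A_{\Q}=B_{\Q}$. Consequently Parts 3 and 4 genuinely require $\mathcal{L}_1$ and $\mathcal{L}_2$ to have a common $\Q$-span (as in all the intended applications, where both are full-rank subgroups of $\Q^{n}$); without it a rank reason already defeats the claim, e.g. $\mathcal{L}_1=\Z(1,0)$ and $\mathcal{L}_2=\Z(0,1)$ give $(\mathcal{L}_1\cap\mathcal{L}_2)^{*}=0$ while $\langle\mathcal{L}_1^{*},\mathcal{L}_2^{*}\rangle=\Z^{2}$. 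I would therefore carry out Parts 3 and 4 under the standing assumption that the two lattices share their $\Q$-span.
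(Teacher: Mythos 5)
Your proofs of (1) and (2) are correct and follow the same route as the paper: the identity $B^*B^T=\mathbf{I}_m$ gives one inclusion, the expansion of an arbitrary $\mathbf{x}\in\mathcal{L}^*$ over the rows of $B^*$ with coefficients $\mathbf{x}\cdot b_j\in\Z$ gives the other, and $(B^*)^*=B$ settles double duality. For (3) and (4) you genuinely improve on the paper, which disposes of them with ``(3) is clear from the definition of duality'' and ``(4) follows from (2) and (3).'' The statement that is actually clear from the definition is your lemma $\langle A,B\rangle^*=A^*\cap B^*$ (for lattices with the same $\Q$-span); (3) itself is the harder direction and needs double duality, so your order --- prove (4) first via the lemma and $\mathcal{L}_i^{**}=\mathcal{L}_i$, then dualize to get (3) --- is the logically cleaner one. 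More importantly, your observation that (3) and (4) are false without a common-$\Q$-span hypothesis is a genuine catch: your counterexample $\mathcal{L}_1=\Z(1,0)$, $\mathcal{L}_2=\Z(0,1)$ is valid, since $(\mathcal{L}_1\cap\mathcal{L}_2)^*=0$ while $\langle\mathcal{L}_1^*,\mathcal{L}_2^*\rangle=\Z^2$, precisely because the paper's dual is constrained to live inside $\mathcal{L}_{\Q}$. The paper never states this hypothesis, but it holds in every place the proposition is used (the intersection algorithm of \S~2.2.3 and Theorem~2.13(2), where the lattices $\mathcal{L}_c(Q_I)$ are all of full rank $r$ in $\Z^r$), so your standing assumption is exactly the right repair rather than an artificial restriction.
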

\begin{proof} (1) Notice first of all that the spaces spanned over $\Q$ by the rows of $B^*$ and $B$ are the same.  We have
$B^*\cdot B^T=\mathbf{I}_m$, so that the rows of $B^*$ belong to ${\mathcal{L}}^*$. Conversely, let $\mathbf{x} $ be a (row)  vector in ${\mathcal{L}}^*$; then $\mathbf{x}\cdot B^T=\mathbf{y}\in\Z^m$ and since $\mathbf{x}\in \mathcal{L}_{\Q}$ there exists $\mathbf{w}\in \Q^m$ such that $\mathbf{w}\cdot B=\mathbf{x}$. Then $\mathbf{y}=\mathbf{w}\cdot(B\cdot B^T)$, so that
$$\mathbf{x}=\mathbf{w}\cdot B=\mathbf{y}\cdot(B\cdot B^T)^{-1}\cdot B=\mathbf{y}\cdot B^*\in\mathcal{L}_r(B^*).$$
(2) Let $B, B^*$ as above; then it is easily seen that  $(B^*\cdot (B^*)^T)^{-1}\cdot B^*=B$, proving the claim.\\
(3) is clear from the definition of duality.\\
(4) follows from (2) and (3). \end{proof}

\subsubsection{Intersection of subgroups of $\Z^n$}\label{sssez:intersezione}
Let $\mathcal{L}_1$ and $\mathcal{L}_2$ be subgroups of $\Z^n$. The previous Proposition~\ref{prop:duallattice} provides a constructive method to compute a basis of the intersection $\mathcal{L}_1\cap\mathcal{L}_2$:
\begin{itemize}
\item compute matrices $B_1$ and $B_2$  having on the rows bases of  $\mathcal{L}_1$ and $\mathcal{L}_2$ respectively;
\item for $i=1,2$ compute $B^*_i$; define
$$M=\begin{pmatrix} B^*_1\\
B^*_2
\end{pmatrix}
$$
\item  compute $A'=\HNF(M)$ and extract from $A'$ the submatrix $A$ of all non-zero rows of $A'$.
\item compute $B=A^*$; then the rows of $B$ form a basis for $\mathcal{L}_1\cap\mathcal{L}_2$.
\end{itemize}
Of course, the procedure can be adapted in order to calculate the intersection of more than two subgroups.
\subsubsection{Smith Normal Form ($\SNF$)}\par
%\noindent\todo[inline]{!!! inserire qui una sottosezione sul divisori elementari + Smith Normal Form}

We recall the well-known Elementary Divisor Theorem, which will be often used in the sequel. For details see for example \cite[\S~2.4]{Cohen}
\begin{theorem}[Elementary Divisor Theorem]\label{teo:EDT}
Let $\mathcal{L}$ be a subgroup of $\Z^m$. There exist a basis $\mathbf{f}_1,\ldots,\mathbf{f}_m$ of $\Z^m$, a positive integer $k\leq m$ (called the \emph{rank} of $\mathcal{L}$) and integers $c_1,\ldots,c_k>0$ such that $c_i$ divides $c_{i+1}$ for $i=1,\ldots,k-1$, and $c_1\mathbf{f}_1,\ldots,c_k\mathbf{f}_k$ is a basis of $\mathcal{L}$.
\end{theorem}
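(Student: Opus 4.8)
The plan is to deduce the statement from the Smith reduction of an integer matrix. I would first observe that $\mathcal{L}$, being a subgroup of the free abelian group $\Z^m$, is itself free of finite rank, hence admits a finite generating set $\mathbf{v}_1,\dots,\mathbf{v}_n$. Collecting these as the columns of an $m\times n$ integer matrix $A$, the whole problem becomes the search for a diagonal normal form of $A$ under left multiplication by $\GL_m(\Z)$ (a change of basis in the ambient $\Z^m$) and right multiplication by $\GL_n(\Z)$ (a change of generating set of $\mathcal{L}$).

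The core step I would carry out is to show that there exist $U\in\GL_m(\Z)$ and $P\in\GL_n(\Z)$ with
$$UAP=\diag(c_1,\dots,c_k,0,\dots,0),\qquad c_1\mid c_2\mid\cdots\mid c_k,\ c_i>0.$$
I would prove this by induction on $\min(m,n)$ using the classical minimal-entry argument: among all matrices in the orbit of $A$ under the two $\GL$-actions (equivalently, reachable by elementary integer row and column operations), pick one whose nonzero entry of smallest absolute value is as small as possible, and move it to the $(1,1)$ position. A gcd/division argument then shows that this entry $c_1$ divides every entry in its row and in its column; otherwise a single row or column operation would produce a strictly smaller nonzero entry, contradicting minimality. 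Clearing the first row and column reduces $A$ to the block form $c_1\oplus A'$, and a further check shows that $c_1$ divides every entry of $A'$, which yields the divisibility $c_1\mid c_2$; I would then apply the inductive hypothesis to $A'$.

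Having the normal form, I would read off the conclusion. Writing $\mathbf{f}_i:=U^{-1}\mathbf{e}_i$ gives a basis of $\Z^m$, since $U^{-1}\in\GL_m(\Z)$. Because $P\in\GL_n(\Z)$ is invertible over $\Z$, the columns of $AP$ generate the same subgroup $\mathcal{L}$ as the columns of $A$; and the $i$-th column of $AP=U^{-1}\diag(c_1,\dots,c_k,0,\dots,0)$ is exactly $c_i\mathbf{f}_i$ for $i\le k$ and $0$ otherwise. Hence $\mathcal{L}$ is generated by $c_1\mathbf{f}_1,\dots,c_k\mathbf{f}_k$, and since the $\mathbf{f}_i$ are linearly independent and each $c_i>0$, these form a basis of $\mathcal{L}$; in particular $k=\rk(\mathcal{L})$.

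The main obstacle is precisely the divisibility chain $c_i\mid c_{i+1}$: producing \emph{some} diagonal form is routine Gaussian-style elimination over $\Z$, but guaranteeing the divisibility requires the minimality choice of the pivot (or, alternatively, an argument comparing the greatest common divisors of all $i\times i$ minors, which are invariant under the two $\GL$-actions and whose successive ratios recover the $c_i$). I would make sure the inductive hypothesis is applied to the block $A'$ only after verifying that $c_1$ divides all of its entries, so that the divisibility propagates down the diagonal.
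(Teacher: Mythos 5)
Your argument is correct and follows exactly the route the paper itself relies on: the paper gives no proof of Theorem~\ref{teo:EDT} but refers to Cohen's treatment via the Smith Normal Form (cf.\ Theorem~\ref{teo:SNF}), which is precisely the diagonalization-with-divisibility argument you carry out. The one step worth making explicit is that $\mathcal{L}$ is finitely generated (e.g.\ because $\Z^m$ is a Noetherian $\Z$-module), which you need before forming the matrix $A$; everything else, including the minimal-pivot argument for the divisibility chain and the reading-off of the basis from $AP=U^{-1}\diag(c_1,\dots,c_k,0,\dots,0)$, is sound.
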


An effective version of Theorem \ref{teo:EDT} is given by the Smith Normal Form algorithm:

\begin{definition}
\label{def:SNF} Let $A=(a_{ij})$ be a $d\times m$ matrix with integer entries. Then $A$ is said in \emph{Smith Normal Form} (abbreviated $\SNF$) if there exist a positive integer $k\leq d$  and integers $c_1,\ldots,c_k>0$ such that $c_i$ divides $c_{i+1}$ for $i=1,\ldots,k-1$ such that  $a_{ii}=c_i$ if $1\leq i\leq k$ and $a_{ij}=0$ if either $i\not=j$ or $i>k$.
\end{definition}

\begin{theorem}[\cite{Cohen}, Algorithm 2.4.14]\label{teo:SNF} Let $A=(a_{ij})$ be a $d\times m$ matrix with integer entries. Then it is possible to compute matrices $\alpha\in\GL_d(\Z),\beta\in\GL_m(\Z)$ such that $\alpha\cdot A\cdot \beta$ is in $SNF$.
\end{theorem}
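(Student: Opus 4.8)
The plan is to give the standard constructive diagonalization, built entirely from the three elementary integer operations together with Euclidean division in $\Z$. First I would record that each admissible operation --- interchanging two rows (resp.\ columns), adding an integer multiple of one row (resp.\ column) to another, and multiplying a row (resp.\ column) by $-1$ --- is realized by left (resp.\ right) multiplication by an elementary matrix lying in $\GL_d(\Z)$ (resp.\ $\GL_m(\Z)$); since products of unimodular matrices are unimodular, accumulating all the operations performed yields the required $\alpha$ and $\beta$. The argument then proceeds by induction on $\min(d,m)$, with an inner termination argument based on the absolute value of a distinguished pivot entry.

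For the inductive step, assuming $A\neq 0$, I would bring a nonzero entry of minimal absolute value into the $(1,1)$ position by a pair of interchanges; call it the pivot. Using Euclidean division I subtract suitable integer multiples of the first row from the others and of the first column from the others, replacing each entry $a_{i1}$ (resp.\ $a_{1j}$) by its remainder modulo the pivot. If some remainder is nonzero it has strictly smaller absolute value than the pivot, so I reinstate it in the $(1,1)$ slot and repeat; as the pivot's absolute value strictly decreases at each such round, this subprocess terminates, leaving the first row and first column zero apart from a positive entry $c_1=a_{11}$.

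The delicate point --- the main obstacle --- is not the diagonalization itself but enforcing the divisibility chain $c_1\mid c_2\mid\cdots$. After clearing the first row and column, $c_1$ need not divide every entry of the remaining $(d-1)\times(m-1)$ block. If $c_1\nmid a_{ij}$ for some such entry, I add the $i$-th row to the first row (so that $a_{ij}$ now appears in the first row) and rerun the reduction above; by Euclidean division this produces a new pivot strictly smaller than $c_1$, so after finitely many repetitions $c_1$ divides every entry of the matrix. At this stage I recurse on the bottom-right block: because $c_1$ divides all of its entries, each subsequent operation preserves this property, and so the next pivot $c_2$ produced by the recursion is a multiple of $c_1$. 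Iterating yields diagonal entries $c_1\mid c_2\mid\cdots\mid c_k$ with the remaining rows and columns zero, that is, the $\SNF$ of Definition~\ref{def:SNF}, together with the accumulated unimodular matrices $\alpha$ and $\beta$.
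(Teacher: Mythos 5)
Your proof is correct and is essentially the standard constructive argument that the paper's citation (Cohen, Algorithm 2.4.14) refers to; the paper itself offers no proof beyond that reference. Your handling of the two delicate points --- termination of the pivot-reduction via strict decrease of $|a_{11}|$, and the row-addition trick to force $c_1$ to divide the remaining block before recursing --- is exactly right and matches the textbook algorithm.
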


Let $\mathcal{L}$ be a subgroup of $\Z^m$, and suppose to know generators $\mathbf{g}_1,\ldots,\mathbf{g}_d$ of $\mathcal{L}$. Let $A$ be the matrix having   $\mathbf{g}_1,\ldots,\mathbf{g}_d$ as rows, and let $\alpha\in\GL_d(\Z),\beta\in\GL_m(\Z)$ such that $\alpha A\beta$ is in $\SNF$, with diagonal coefficients $c_i, i=1,\ldots,k$, as in Theorem \ref{teo:SNF}. Then the rows of $\beta^{-1}$ provide a basis $\mathbf{f}_1,\ldots,\mathbf{f}_m$ of $\Z_m$ such that $c_1\mathbf{f}_1,\ldots,c_k\mathbf{f}_k$ is a basis of $\mathcal{L}$, according to Theorem \ref{teo:EDT}.\\

Let us fix some notation. If $A$ is a $d\times m$ matrix with integer coefficients, ${\mathcal L}_r(A)$ (${\mathcal L}_c(A)$ respectively) denotes the lattice spanned by the rows (resp. columns) of $A$.\\
A matrix, and in particular a vector, is said \emph{positive} (resp. \emph{strictly positive}) if each entry is $\geq 0$ (resp. $>0$).\\
We write $A_1\sim A_2$ if $A_1$ and $A_2$ are in the same orbit by left multiplication by $\GL_d(\Z)$.\\

\section{Poly weighted spaces: motivations and main geometrical results}\label{sez:motivazioni}
\subsection{Divisors, fan matrices and weight matrices}

Consider the normal toric variety $X=X(\Si)$.
If $\mathcal{W}(X)$ denotes the group of Weil divisors of $X$ then its subgroup of \emph{torus--invariant Weil divisors} is given by
\begin{equation*}
    \mathcal{W}_T(X)=\left\langle D_{\rho } \mid \rho \in \Sigma (1)\right\rangle_{\Z} =
    \bigoplus_{\rho \in \Sigma (1)}\Z\cdot D_{\rho }
\end{equation*}
where $D_{\rho}=\overline{O}_{\rho}$ is the closure of the torus orbit of the ray $\rho$. Let $\mathcal{P}(X)\subset\mathcal{W}(X)$ be the subgroup of \emph{principal divisors} and $\v_{\rho}$ be the minimal integral vector in $\rho$ i.e. the generator of the monoid $\rho\cap N$. Then the morphism
\begin{equation}\label{div}
\begin{array}{llll}
div : & M & \longrightarrow & \mathcal{P}(X)\cap \mathcal{W}_{T}(X)=:
\mathcal{P}_{T}(X) \\
& \u & \longmapsto & div(\u):=\sum_{\rho \in \Sigma (1)}\langle \u,\v_{\rho }\rangle
D_{\rho }
\end{array}
\end{equation}
is surjective. Let $\Pic(X)$ be the group of line bundles modulo isomorphism. It is well known that for an \emph{irreducible} variety $X$ the map $D\mapsto\mathcal{O}_X(D)$ induces an isomorphism $\mathcal{C}(X)/\mathcal{P}(X)\cong\Pic(X)$, where $\mathcal{C}(X)\subset\mathcal{W}(X)$ denotes the subgroup of Cartier divisors. The divisor class group is defined as the group of Weil divisors modulo rational (hence linear) equivalence, i.e. $\Cl(X):=\mathcal{W}(X)/\mathcal{P}(X)$. Then the inclusion $\mathcal{C}(X)\subset\mathcal{W}(X)$ passes through the quotient giving an immersion $\Pic(X)\hookrightarrow \Cl(X)$. One of main results on divisors of toric varieties is then the following

\begin{theorem}[\cite{Fulton} \S~3.4, \cite{CLS} Proposition 4.2.5]\label{thm:divisors} For a toric variety $X=X(\Sigma )$ the following sequence is exact
\begin{equation}
\def\objectstyle{\displaystyle}
\xymatrix@1{M \ar[r]^-{div} & *!U(.45){\bigoplus\limits_{\rho \in \Sigma (1)} \Z \cdot D_{\rho}}
\ar[r]^-d & \Cl (X) \ar[r] & 0 }
\label{deg sequence}
\end{equation}
Moreover if $\Sigma (1)$ generates $N_{\R}$ then the morphism $div$ is injective giving the following exact sequences
\begin{equation}
\def\objectstyle{\displaystyle}
\xymatrix{
& 0 \ar[d] & 0 \ar[d] & 0 \ar[d] & \\
0 \ar[r] & M \ar[r]^{div}\ar[d]^{\parallel} &
\mathcal{C}_T (X) \ar[r]\ar[d] & {\Pic(X)} \ar[r]\ar[d] & 0 \\
0 \ar[r] & M \ar[r]^-{div}\ar[d] & *!U(.40){\bigoplus\limits_{\rho \in \Sigma (1)} \Z \cdot D_{\rho}}
\ar[r]^-d & \Cl (X) \ar[r] & 0 \\
 & 0 &  &  & }
\label{div-diagram}
\end{equation}
where $\mathcal{C}_T (X)=\mathcal{C} (X)\cap \mathcal{W}_{T}(X)$. In particular $\Pic(X)$ and $\Cl(X)$ turn out to be completely described by means of torus invariant divisors and
\[
\rk \left( \Pic \left( X\right) \right) \leq \rk \left( \Cl\left( X\right)
\right) =\left| \Sigma (1)\right| -n\ .
\]
Moreover if $\Si$ contains a $n$-dimensional cone then the first sequence in $(\ref{div-diagram})$ splits implying that $\Pic(X)$ is a free abelian group.
\end{theorem}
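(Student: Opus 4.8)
The plan is to deduce everything from the bottom row of (\ref{div-diagram}), i.e. from the exactness of (\ref{deg sequence}), and then bootstrap upward. First I would prove surjectivity of $d$ together with the identification $\ker(d)=\im(div)$, which jointly say that $\Cl(X)$ is the cokernel of $div$. Surjectivity is an excision statement: the complement of the $T$-invariant prime divisors is the torus $T=(\C^*)^n$, which has $\Cl(T)=0$ (it is an open subset of $\C^n$ obtained by inverting coordinates, hence has trivial class group), so the localization sequence $\bigoplus_{\rho}\Z\cdot D_\rho\to\Cl(X)\to\Cl(T)\to 0$ forces every class to be represented by $T$-invariant divisors. For the kernel I would take $\sum a_\rho D_\rho=div(f)$ principal, restrict to $T$ to get $div(f)|_T=0$, and use that the units of $\mathcal{O}(T)$ are exactly the monomials $c\cdot\chi^{\u}$; then $f=c\cdot\chi^{\u}$ and $\sum a_\rho D_\rho=div(\chi^{\u})=div(\u)$. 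I expect this identification of $T$-invariant principal divisors with characters to be the main obstacle, since it is the step where the link between monomial functions and the combinatorics of the $\v_\rho$ genuinely enters.

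Once the bottom row is exact, injectivity of $div$ under the spanning hypothesis is immediate: $div(\u)=\0$ gives $\langle\u,\v_\rho\rangle=0$ for every $\rho$, so $\u=\0$ as soon as the $\v_\rho$ span $N_\R$. For the top row $0\to M\to\mathcal{C}_T(X)\to\Pic(X)\to 0$, I would show that the inclusion $\mathcal{C}_T(X)\hookrightarrow\mathcal{W}_T(X)$ induces an isomorphism $\mathcal{C}_T(X)/\mathcal{P}_T(X)\cong\Pic(X)$. Surjectivity: any Cartier class has a $T$-invariant Weil representative by the bottom row, and that representative is still Cartier because it differs from the original by a principal, hence Cartier, divisor. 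Injectivity: a $T$-invariant Cartier divisor that is principal lies in $\mathcal{P}(X)\cap\mathcal{W}_T(X)=\mathcal{P}_T(X)=\im(div)$. The three vertical maps are the inclusions $M=M$, $\mathcal{C}_T(X)\hookrightarrow\mathcal{W}_T(X)$ and $\Pic(X)\hookrightarrow\Cl(X)$; commutativity is a direct check and exactness of the columns then follows.

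The rank statement drops out of the short exact sequence at the bottom: tensoring with $\Q$ gives $\rk\Cl(X)=|\Sigma(1)|-\rk M=|\Sigma(1)|-n$, while $\Pic(X)\hookrightarrow\Cl(X)$ forces $\rk\Pic(X)\leq\rk\Cl(X)$. For the final assertion I would use the defining property of a $T$-invariant Cartier divisor $D=\sum a_\rho D_\rho$: on each maximal affine chart $U_\sigma$ it is linearized by some $\u_\sigma\in M$ with $\langle\u_\sigma,\v_\rho\rangle=-a_\rho$ for all $\rho\in\sigma(1)$. Choosing an $n$-dimensional cone $\sigma$, its rays span $N_\R$, so $\u_\sigma$ is uniquely determined by $D$; the assignment $D\mapsto-\u_\sigma$ is then a homomorphism $\mathcal{C}_T(X)\to M$ sending $div(\u)$ to $\u$ (the support function of $div(\u)$ being the global linear form $-\u$), hence a retraction of $div$. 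Therefore the top row splits, $\mathcal{C}_T(X)\cong M\oplus\Pic(X)$, and since $\mathcal{C}_T(X)\subseteq\mathcal{W}_T(X)$ is free abelian, its direct summand $\Pic(X)$ is free as well. The only subtle point here is the well-definedness of the retraction, which is precisely what the existence of a top-dimensional cone guarantees.
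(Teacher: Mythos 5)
Your argument is correct, but note that the paper offers no proof of this statement at all: it is quoted verbatim from the cited sources (Fulton \S~3.4 and \cite[Prop.~4.2.5]{CLS}), and your proposal reproduces exactly the standard argument given there (localization to the torus with $\Cl(T)=0$, units of $\C[M]$ being $c\cdot\chi^{\u}$, and the retraction $\mathcal{C}_T(X)\to M$ via the Cartier data $\u_\sigma$ on a full-dimensional cone). The one step you flag but do not carry out, namely $\mathrm{ord}_{D_\rho}(\chi^{\u})=\langle\u,\v_\rho\rangle$, is indeed the only place requiring genuine toric input (the DVR at the generic point of $D_\rho$), and is correctly identified as such.
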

In the following $X=X(\Si)$ will be a $\Q$--factorial and complete $n$--dimensional toric variety, meaning that $\Si$ is simplicial and its support $|\Si|=N_{\R}$, respectively. In particular completeness of $X$ implies that it cannot admit torus factors, or equivalently that $\Si(1)$ generates the whole $N_{\R}$. Then diagram  (\ref{div-diagram}) in Theorem \ref{thm:divisors} holds and $|\Si(1)|=n+r$, where $r:=\rk \Pic(X)\geq 1$ is the \emph{Picard number of} $X$, simply called \emph{the rank of $X$}. Recalling Definitions \ref{def:1-gen-fan} and \ref{def:1-simp-gen-fan}, let $V=(\v_1,\ldots,\v_{n+r})$ be the $n\times (n+r)$ \emph{reduced fan matrix of} $\Si$ i.e. $\v_i$ is the unique generator of the monoid $\langle \v_i \rangle\cap N$.\\ Observe that the transposed matrix $V^T$ is a representative matrix of the $\Z$-linear morphism $div$ defined in (\ref{div}) and appearing in diagram (\ref{div-diagram}).

\begin{definition}[\emph{(Reduced) Weight matrix} of a $\Q$--factorial complete toric variety]\label{def:Q} Let $X$ be a $\Q$--factorial complete $n$--dimensional toric variety of rank $r$ and consider the exact sequence given by the second row of the diagram (\ref{div-diagram}) in Theorem \ref{thm:divisors}, which is
\begin{equation}\label{div-sequence}
    \xymatrix{0 \ar[r] & M\ar[r]^-{div}& \mathcal{W}_T(X)=\Z^{n+r}
\ar[r]^-{d} & \Cl(X) \ar[r]& 0}\ .
\end{equation}
Dualizing this sequence, one gets the following exact sequence of free abelian groups
\begin{equation}\label{HomZ-div-sequence}
                    \xymatrix{0 \ar[r] & \Hom(\Cl(X),\Z) \ar[r]^-{d^{\vee}} & \Hom(\mathcal{W}_T(X),\Z)= \Z^{n+r}
 \ar[r]^-{div^{\vee}} & N }
\end{equation}
The transposed matrix $Q$ of a $(n+r)\times r$ representative matrix $Q^T$  of the $\Z$-linear morphism $d^{\vee}$ is called a \emph{weight matrix of $X$}.

A weight matrix $Q$ is called \emph{reduced} if for every $i=1,\ldots,n+r$, $\mathcal{L}_r(Q^i)$ has no cotorsion in $\Z^{n+r-1}$, where $Q^i$ denotes the  matrix obtained by removing from $Q$ the $i$-th column and $\mathcal{L}_r(Q^i)$ the lattice spanned by the rows of $Q^i$, as defined at the end of \S\ref{sez:preliminari}.
\end{definition}

\begin{remark}\label{rem:} Tensoring the short exact sequence (\ref{div-sequence}) by the field $\Q$, the torsion subgroup $\Tors(\Cl(X))$ of the class group $\Cl(X)$ is killed and one can think of a weight matrix $Q$ as a representative matrix of the $\Q$-linear morphism $d$. This is clearly no more true by the $\Z$-linear point of view, so giving the motivation for defining a weight matrix as the transposed of a representative matrix of the dual $\Z$-linear morphism $d^{\vee}$.

As observed \cite[(9.5.8)]{CLS}, recalling the definition of Gale duality originally given by Oda and Park in \cite{Oda-Park}, a (not necessarily reduced) weight matrix $Q$ is a Gale dual matrix of the transposed $V^T$ of a reduced fan matrix $V$. By abuse of notation we will say that $V$ and $Q$ are \emph{Gale dual matrices}. In \S\,\ref{sez:GaleDuality} we will give a linear algebraic characterization of fan matrices, weight matrices and Gale duality, in terms of $F$--matrices, $W$--matrices and their reduction (see Definitions \ref{def:Fmatrix}, \ref{def:Wmatrix}, \ref{def:F-red} and \ref{def:W-red}).

Let us finally notice that the reduction of a weight matrix as defined in the last part of Def.~\ref{def:Q} is actually the generalization, on the rank $r$, of the standard definition of a reduced weight system of a weighted projective space, as observed in Remark~\ref{rem:WPS}. Nevertheless it is cumbersome and we think that the more natural definition of a reduced weight matrix is that given in Def.~\ref{def:W-red}. Their equivalence is proved in the Reduction Theorem~\ref{thm:riduzione}.
\end{remark}

One of the main geometric results of this paper is the following:

\begin{theorem}\label{thm:pi11=Tors}  Let $X=X(\Si)$ be a $\Q$--factorial complete toric variety. For any ray $\rho\in\Si(1)$ let $\v_{\rho}$ be the minimal integral vector of $\rho$ and consider the $\Z$-module
\begin{equation}\label{NSigma}
    N_{\Si(1)}:=\langle\v_{\rho}\ |\ \rho\in\Si(1)\rangle_{\Z}
\end{equation}
as a subgroup of the lattice $N$. Let $V$ be a fan matrix of $\Si$, and let $T_n$ be the upper $n\times n$ part of the $\HNF$ of $V^T$. Then
 \begin{equation}\label{TorsdiChow}
   N/N_{\Sigma(1)}\cong \Tors(\Cl(X))\cong \Z^n/ \mathcal{L}_r(T_n)\,,
\end{equation}
where $\mathcal{L}_r(T_n)$ is the lattice spanned by the rows of $T_n$ (as defined at the end of \S~\ref{sez:preliminari}).
\end{theorem}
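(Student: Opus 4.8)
The plan is to read both isomorphisms off the exact sequence (\ref{div-sequence}), exploiting that $V^T$ is a representative matrix of $div$. Taking $V$ to be the \emph{reduced} fan matrix fixed in the running conventions, its columns are exactly the minimal generators $\v_\rho$, $\rho\in\Si(1)$, so the rows of $V^T$ are these same vectors and $\mathcal{L}_r(V^T)=N_{\Si(1)}$. Viewing $V$ as the linear map $\Z^{n+r}\to N=\Z^n$, $\e_i\mapsto\v_i$, its image is $N_{\Si(1)}$, whence $N/N_{\Si(1)}=\operatorname{coker}(V)$; dually, (\ref{div-sequence}) gives $\Cl(X)=\operatorname{coker}(div)=\operatorname{coker}(V^T)$. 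Completeness forces $\Si(1)$ to span $N_\R$, so $V$ and $V^T$ have maximal rank $n$; in particular $\operatorname{coker}(V)=N/N_{\Si(1)}$ is finite, i.e. pure torsion.

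First I would establish the central isomorphism $N/N_{\Si(1)}\cong\Tors(\Cl(X))$. Applying the Elementary Divisor Theorem and $\SNF$ (Theorems \ref{teo:EDT}, \ref{teo:SNF}) to $V^T$, there are $\alpha\in\GL_{n+r}(\Z)$ and $\beta\in\GL_n(\Z)$ with $\alpha V^T\beta$ diagonal, carrying invariant factors $c_1\mid\cdots\mid c_n$. Then $\operatorname{coker}(V^T)\cong\bigoplus_i\Z/c_i\Z\oplus\Z^r$, so $\Tors(\Cl(X))\cong\bigoplus_i\Z/c_i\Z$. Transposing the same relation gives $\beta^T V\alpha^T$ diagonal with the \emph{identical} invariant factors, whence $\operatorname{coker}(V)\cong\bigoplus_i\Z/c_i\Z$ (no free summand, since $V$ has full row rank $n$). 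Comparing the two computations yields $N/N_{\Si(1)}=\operatorname{coker}(V)\cong\Tors(\Cl(X))$.

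For the right-hand isomorphism I would invoke the $\HNF$. Since $\rk V^T=n$, Theorem \ref{thm:cohen} produces $U\in\GL_{n+r}(\Z)$ with $UV^T$ in $\HNF$ having exactly $n$ nonzero rows (the bottom $r$ vanish), and these nonzero rows constitute precisely $T_n$, upper triangular with positive diagonal. By Proposition \ref{prop:HNFuno}(1) the nonzero rows of the $\HNF$ of $V^T$ form a $\Z$-basis of the row lattice $\mathcal{L}_r(V^T)=N_{\Si(1)}$; hence $\mathcal{L}_r(T_n)=N_{\Si(1)}$ as sublattices of $\Z^n=N$, and therefore $\Z^n/\mathcal{L}_r(T_n)=N/N_{\Si(1)}$. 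Chaining this with the previous paragraph closes the chain (\ref{TorsdiChow}).

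The only substantive point is the middle isomorphism; the rest is the observation that the $\HNF$ basis of the row lattice of $V^T$ is literally $T_n$. The crux is thus the transpose-invariance of the invariant factors, i.e. that $\operatorname{coker}(V)$ and $\operatorname{coker}(V^T)$ have isomorphic torsion subgroups, and this is exactly where the $\SNF$ does the real work. A minor point worth flagging is that the argument genuinely uses $V$ \emph{reduced}, so that $\mathcal{L}_r(V^T)$ coincides with $N_{\Si(1)}$ rather than with a proper finite-index subgroup of it.
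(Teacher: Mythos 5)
Your proof is correct and follows essentially the same route as the paper: identify $\Cl(X)$ with $\Z^{n+r}/\mathcal{L}_r(V)$ from the sequence (\ref{div-sequence}), transfer the torsion to $\Z^n/\mathcal{L}_c(V)=N/N_{\Si(1)}$ via the transpose-invariance of the Smith invariant factors (the paper packages this as Proposition \ref{prop:Gale1}(2)), and recognize the rows of $T_n$ as an $\HNF$ basis of $\mathcal{L}_r(V^T)=N_{\Si(1)}$ via Proposition \ref{prop:HNFuno}(1). Your remark that reducedness of $V$ is needed for $\mathcal{L}_c(V)=N_{\Si(1)}$, and that completeness (rather than simpliciality) is what gives full rank, is accurate and consistent with the paper's running conventions.
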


A proof is given in \S\ref{ssez:dimostrazione di pi11=Tors}.

\begin{remark} The group (\ref{TorsdiChow}) turns out to be isomorphic to the fundamental group in codimension 1 $\pi_1^1(X)$, as defined in \cite{Buczynska}.
\end{remark}

\subsection{Poly weighted spaces (PWS)}\label{ssez:PWS}

A first immediate consequence of previous Proposition \ref{prop:HNFuno}  and Theorem \ref{thm:pi11=Tors} is that

\begin{proposition}\label{prop:PWS} Let $X=X(\Si)$ be a $\Q$--factorial complete $n$--dimensional toric variety of rank $r$. Let $V$ be a fan matrix of $\Sigma$. Then the following are equivalent:
\begin{enumerate}
  \item the group $N/N_{\Si(1)}$ is trivial,

   \item the $\HNF$ of the transposed matrix $V^T$ is given by $\left(
                                                                   \begin{array}{c}
                                                                     \mathbf{I}_n \\
                                                                     \mathbf{0}_{r,n} \\
                                                                   \end{array}
                                                                 \right)
    $,
  \item the lattice $\mathcal{L}_c(V)$ spanned by the columns of $V$ coincides with the whole $\Z^n$,

   \item the  fan matrix $V$ has coprime $n\times n$ minors.
\end{enumerate}
\end{proposition}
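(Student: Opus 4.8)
The plan is to route all four conditions through Theorem~\ref{thm:pi11=Tors} and Proposition~\ref{prop:HNFuno}, exploiting the fact that the columns of the reduced fan matrix $V$ are precisely the minimal generators $\v_\rho$, so that $N_{\Si(1)}=\mathcal{L}_c(V)$ while $N\cong\Z^n$. The very first thing I would record is the structural description of $T_n$. Since $X$ is complete, the rays span $N_\R$, hence $V$ has rank $n$ and so does $V^T$; therefore the $\HNF$ of the $(n+r)\times n$ matrix $V^T$ has exactly $n$ nonzero rows, namely the rows of $T_n$, and the bottom $r$ rows vanish. By Proposition~\ref{prop:HNFuno}(1) applied to $A=V^T$, those nonzero rows form a $\Z$-basis of $\mathcal{L}_c(V)=N_{\Si(1)}$, so $N_{\Si(1)}=\mathcal{L}_r(T_n)$, in agreement with Theorem~\ref{thm:pi11=Tors}. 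Moreover, by Definition~\ref{def:HNF}, $T_n$ is upper triangular with strictly positive diagonal pivots $m_{11},\ldots,m_{nn}$ and off-diagonal entries reduced modulo the pivots.

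With these facts in hand, the equivalences become short. Equivalence $(1)\Leftrightarrow(3)$ is a tautology once one unwinds $N_{\Si(1)}=\mathcal{L}_c(V)$ and $N\cong\Z^n$: the quotient $N/N_{\Si(1)}$ is trivial exactly when $\mathcal{L}_c(V)=\Z^n$. For $(1)\Leftrightarrow(2)$ I would invoke Theorem~\ref{thm:pi11=Tors}, which gives $N/N_{\Si(1)}\cong\Z^n/\mathcal{L}_r(T_n)$; this vanishes iff $\det T_n=\prod_i m_{ii}=1$, which forces each pivot $m_{ii}=1$, whereupon the reduction conditions of Definition~\ref{def:HNF} force all off-diagonal entries to be $0$, i.e. $T_n=\mathbf{I}_n$. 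Together with the vanishing bottom block this is precisely condition $(2)$, and the converse direction is immediate.

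The only step with genuine content is $(3)\Leftrightarrow(4)$, and I expect the determinantal identity to be the main obstacle. The claim is that $[\Z^n:\mathcal{L}_c(V)]$ equals the greatest common divisor of the $n\times n$ minors of $V$. I would establish this either via the Elementary Divisor Theorem~\ref{teo:EDT} and the Smith Normal Form (Theorem~\ref{teo:SNF}), writing $V=\alpha\,D\,\beta$ with $\alpha,\beta$ unimodular and reading off the index as $\prod_i c_i$, the $n$-th determinantal divisor, which is exactly the gcd of the maximal minors; or, to stay within the $\HNF$ framework already built, by observing that left multiplication by the unimodular $U$ of Theorem~\ref{thm:cohen} leaves the gcd of the maximal minors of $V^T$ unchanged, while the only nonvanishing maximal minor of $\HNF(V^T)$ is $\det T_n$ (any other selection of $n$ rows meets the zero block). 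Since the $n\times n$ minors of $V^T$ coincide with those of $V$, the gcd of the latter equals $|\det T_n|=[\Z^n:\mathcal{L}_c(V)]$.

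Putting the pieces together, the columns of $V$ generate $\Z^n$ (that is, the index is $1$) if and only if these minors are coprime, giving $(3)\Leftrightarrow(4)$ and closing the cycle of equivalences. The whole argument is \emph{effective}, in that $T_n$ and hence all the stated conditions can be read off directly from $\HNF(V^T)$.
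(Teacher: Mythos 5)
Your proof is correct and follows essentially the same route as the paper: the equivalences $(1)\Leftrightarrow(2)\Leftrightarrow(3)$ are drawn from Theorem~\ref{thm:pi11=Tors} exactly as in \S~\ref{ssez:dimostrazione di PWS}, and your identification of $\gcd$ of the $n\times n$ minors with $|\det T_n|=[\Z^n:\mathcal{L}_c(V)]$ via the $\HNF$ is precisely the paper's Corollary~\ref{cor:minoricoprimi}. The only (immaterial) difference is that the paper justifies the invariance of the $\gcd$ of maximal minors under unimodular multiplication by an explicit column-operation argument (Lemma~\ref{prop:mcdminori}), whereas you invoke the Smith Normal Form / determinantal-divisor fact; both are sound.
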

A proof is given in \ref{ssez:dimostrazione di PWS}.

\begin{definition}\label{def:PWS} A \emph{poly weighted space} (PWS) is a $\Q$--factorial complete toric variety satisfying the equivalent conditions of Proposition \ref{prop:PWS}.
\end{definition}

\begin{remarks}\label{rem:motivazioni} Let us list an amount of motivating reasons supporting the previous definition.
\begin{itemize}
  \item[1.] Let $X$ be a $n$--dimensional PWS of rank 1. Then $X$ is a weighted projective space (WPS) of weights given by the entries of the $1\times(n+1)$ weight matrix $Q=(q_1,\ldots,q_{n+1})$. In particular, the previous Remark \ref{rem:} clarifies that $q_1,\ldots,q_{n+1}$ give a reduced weight system of the WPS $X=\P(q_1,\ldots,q_{n+1})$ if and only if $Q$ is a reduced weight matrix.
  \item[2.] Let $Q=(\q_1,\ldots,\q_{n+r})$ be a $r\times(n+r)$ reduced weight matrix of a PWS $X=X(\Si)$. The Cox presentation of $X$ as a geometric quotient \cite{Cox} shows that
      $$
      X\cong \left(\C^{n+r}\setminus Z_{\Si}\right)/\left(\C^*\right)^r
$$
where the action of $(\C^*)^r$ is completely described by the weight matrix  $Q$ as follows
\begin{equation*}
\quad\quad\quad\xymatrix{(\t,\z)\in (\C^*)^r\times\C^{n+r}\ar@{|->}[r]&\left(\prod_{i=1}^r t_i^{q_{i,1}}z_1,\ldots,\prod_{i=1}^r t_{i}^{q_{i,n+r}}z_{n+r}\right)\in\C^{n+r}}.
\end{equation*}
Notice that if $r=1$ this gives the usual quotient $\left(\C^{n+1}\setminus\{0\}\right)/\C^*$ by the weighted action
$(t,\z)\mapsto(t^{q_1}z_1,\ldots,t^{q_{n+1}}z_{n+1})$
defining $\P(q_1,\ldots,q_{n+1})$.
  \item[3.] Given a $d\times m $ matrix $A$, let us introduce the following notation: for every subset $I\subseteq\{1,\ldots,m\}$  let $A_I$ (resp. $A^I$)  be the submatrix of $A$ given by
    the columns indexed by $I$ (resp. indexed by the complementary subset $\{1,\ldots,m\}\backslash I$). Recalling \cite[Thm.~3]{RT-WPS}, a characterization of a fan matrix $V=(\v_1,\ldots,\v_{n+1})$ of $\P(q_1.\ldots,q_{n+1})$ is that:
  \begin{itemize}
    \item[i.] $\sum_{i=1}^{n+1}q_i\v_i=0$\,,
    \item[ii.] $\forall\,i=1,\ldots,n+1\quad\left|\det\left(V^{\{i\}}\right)\right|=q_i$\,.
  \end{itemize}
  The generalization of these conditions to the case $r>1$ is given by
  \begin{itemize}
    \item[I.] $Q\cdot V^T=0$, by the definition of Gale duality (see \S~\ref{ssez:Gale}),
    \item[II.] $\left|\det(V^I)\right | = \left|\det(Q_I)\right|$, for every  $I\subset\{1,\ldots,n+r\}$ such that $|I|=r$ (by Corollary \ref{cor:Gale-on-det} below).

  \end{itemize}
  \item[4.] Recalling \cite[Lemma~2.11]{BC} and \cite[Prop.~4.7]{Conrads}, a $\Q$--factorial complete toric variety $X$ of rank 1 is a WPS if and only if its divisor class group $\Cl(X)$ is torsion free, hence $\Cl(X)\cong\Z$ and $\Pic(X)$ is a free subgroup of $\Cl(X)$.

      \noindent Analogously, by Proposition \ref{prop:PWS}, a $\Q$--factorial complete toric variety $X$ of rank $r$ is a PWS if and only if its divisor class group $\Cl(X)$ is torsion free, hence $\Cl(X)\cong\Z^r$ and $\Pic(X)$ is a free subgroup, of maximum rank $r$, of $\Cl(X)$.
\end{itemize}
\end{remarks}

The last Remark \ref{rem:motivazioni}.4 deserves further study.
For a WPS $X=\P(q_1,\ldots,q_{n+1})$ with reduced weights,  a generator $L$ of $\Cl(X)\cong \Z$ can be found by taking any solution $(b_1,\ldots,b_{n+r})$ of the diophantine equation $\sum_{j=1}^{n+r}q_j x_j=1$ and setting $L:=\sum_{j=1}^{n+r}b_j D_j$, where $D_j$ is the torus invariant divisor giving the closure of the torus orbit of the ray $\langle\v_j\rangle$. Then a generator of $\Pic(X)$ is given by the line bundle $\mathcal{O}_X\left(\d L\right)\cong \mathcal{O}_X\left((\d/q_j)D_j\right)$, for every $1\leq j\leq n+1$, with $\d:=\lcm(q_1,\ldots,q_{n+1})$ \cite[Prop.~8]{RT-WPS}. In particular the inclusion $\Pic(X)\hookrightarrow \Cl(X)$ is described by the multiplication by $\d$.

\noindent In the general case of a PWS of rank $r$ these considerations admit the following ge\-ne\-ra\-li\-zation:

\begin{theorem}\label{thm:generazione} Let $X=X(\Si)$ be a $n$--dimensional PWS of rank $r$ admitting a reduced weight matrix $Q=(\q_1,\ldots,\q_{n+r})$.
\begin{enumerate}
\item Consider the matrix
$$U_Q=(u_{ij})\in\GL_{n+r}(\Z)\ :\ U_Q\cdot Q^T\ \text{ is a HNF matrix.}$$
Let us denote by $^rU_Q$ the submatrix of $U_Q$ given by the upper $r$ rows of $U_Q$. Then the rows of $^rU_Q$ describe the following set of generators of $\Cl(X)$
\begin{equation}\label{generazione}
    \forall\,1\leq i\leq r\quad L_i:=\sum_{j=1}^{n+r} u_{ij} D_j\in \mathcal{W}_T(X)\quad\text{and}\quad \Cl(X)=\bigoplus_{i=1}^r \Z[d(L_i)]
\end{equation}
where $d:\mathcal{W}_T(X)\to \Cl(X)$ is the morphism appearing in the exact sequence (\ref{div-sequence}).
\item
Let $V$ be a fan matrix of $X$. Define
  $$\mathcal{I}_\Si=\{I\subset\{1,\ldots,n+r\}:\left\langle V^I\right\rangle\in\Si(n)\}.$$
  Through the identification of $\Cl(X)$ with $\Z^r$, just fixed in (\ref{generazione}),
  $$\Pic(X)=\bigcap_{I\in\mathcal{I}_\Si} \mathcal{L}_c(Q_I).$$
  Therefore a basis of $\Pic(X)\subseteq \Cl(X)\simeq \Z^r$ can be computed by applying the procedure described in \S~\ref{sssez:intersezione}.
\item Let $\mathbf{b}_1,...,\mathbf{b}_r$ be a basis of $\Pic(X)$ in $\Z^r\simeq \Cl(X)$, and let $B$ be the $r\times r$ matrix  having $\mathbf{b}_1,...,\mathbf{b}_r$ on the rows.   Then a basis of $\mathcal{C}_T(X)\subseteq \Z^{n+r}\simeq \mathcal{W}_T(X)$ is given by the rows of the matrix
$$C=\begin{pmatrix}B & \mathbf{0}_{r,n}\\  \mathbf{0}_{n,r}& \mathbf{I}_{n}\end{pmatrix}\cdot U_Q= \begin{pmatrix}B\cdot\,^rU_Q\\  V\end{pmatrix}$$ where $V$ is the fan matrix of $X$ given by the lower $n$ rows of $U_Q$ (see Proposition \ref{prop:QdaVeviceversa}).
\item Setting $\d_{\Si}:=\lcm\left(\det(Q_I):I\in\mathcal{I}_\Sigma\right)$
then
$$\d_{\Si}\mathcal{W}_T(X)\subseteq \mathcal{C}_T(X)\ \text{and}$$
$$\d_{\Si}\ \text{divides the index}\ [\Cl(X):\Pic(X)]=[\mathcal{W}_T(X):\mathcal{C}_T(X)]\,.$$
\end{enumerate}
\end{theorem}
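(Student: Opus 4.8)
The plan is to reduce every assertion to linear algebra over $\Z$ carried out on the degree sequence (\ref{div-sequence}) and read through the unimodular matrix $U_Q$.

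\emph{Setup and (1).} Since $X$ is a PWS, $\Cl(X)$ is free of rank $r$ and (\ref{div-sequence}) shows that $d:\mathcal{W}_T(X)=\Z^{n+r}\to\Cl(X)\cong\Z^r$ is surjective and, by Remark~\ref{rem:}, represented by the weight matrix $Q$. Surjectivity of $Q$ forces the gcd of its $r\times r$ minors to be $1$, equivalently the product of the (positive) diagonal entries of the $\HNF$ of $Q^T$ to be $1$; hence each diagonal entry is $1$, and the $\HNF$ reduction kills the remaining entries, so $U_Q\cdot Q^T=\left(\begin{smallmatrix}\mathbf{I}_r\\ \mathbf{0}_{n,r}\end{smallmatrix}\right)$. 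By Proposition~\ref{prop:HNFuno}(2) the lower $n$ rows of $U_Q$ are a $\Z$-basis of $\ker d=\im(div)=V^T\Z^n$ and, by Proposition~\ref{prop:QdaVeviceversa}, may be taken to be the fan matrix $V$. Now the rows of the unimodular $U_Q$ are a $\Z$-basis of $\mathcal{W}_T(X)$; the lower $n$ generate $\ker d$, so the images $d(L_i)$ of the upper $r$ rows generate $\Cl(X)$, and being $r$ generators of a free group of rank $r$ they form a basis. Finally $d(L_i)$ is the $i$-th column of $Q\,U_Q^T=(U_Q Q^T)^T=(\mathbf{I}_r\ \mathbf{0}_{r,n})$, i.e. $d(L_i)=\mathbf{e}_i$; this fixes the identification $\Cl(X)\cong\Z^r$ and makes it agree with the one under which $d=Q$, so all four parts may be read in the same coordinates.

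\emph{(2).} Here I would use that a torus-invariant $D$ is Cartier iff it is principal on each affine chart $U_\sigma$, $\sigma\in\Si(n)$, i.e.
\[
\Pic(X)=\bigcap_{\sigma\in\Si(n)}\ker\bigl(\Cl(X)\longrightarrow\Cl(U_\sigma)\bigr).
\]
Fix $\sigma=\langle V^I\rangle$ with $I\in\mathcal{I}_\Si$; then $|I|=r$, the complementary set $S$ indexes the $n$ rays of $\sigma$, and $V_S:=V^I$ is nonsingular with $\Cl(U_\sigma)\cong\Z^n/\mathcal{L}_r(V_S^T)$. The restriction map is induced by the projection $\Z^{n+r}\to\Z^S$ followed by the quotient; writing a class as $Q\mathbf{a}$ and exploiting $\ker d=V^T\Z^n$, one adjusts $\mathbf{a}$ by an element of $\ker d$ so that its $S$-entries vanish exactly when the class restricts to zero, obtaining $\ker(\Cl(X)\to\Cl(U_\sigma))=\mathcal{L}_c(Q_I)$. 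Intersecting over $I\in\mathcal{I}_\Si$ gives the stated description of $\Pic(X)$, and the basis is then extracted with the intersection algorithm of \S~\ref{sssez:intersezione}. This local Cartier computation, together with keeping the two complementary index sets $I$ and $S$ consistent, is the step I expect to be the main obstacle.

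\emph{(3) and (4).} From the first row of diagram (\ref{div-diagram}) one has $\mathcal{C}_T(X)=d^{-1}(\Pic(X))$. In the $\Z$-basis of $\mathcal{W}_T(X)$ given by the rows of $U_Q$ the map $d$ is the projection onto the first $r$ coordinates, so $d^{-1}(\Pic(X))$, with $\Pic(X)=\mathcal{L}_r(B)$, is spanned in those coordinates by the rows of $\left(\begin{smallmatrix}B&\mathbf{0}_{r,n}\\ \mathbf{0}_{n,r}&\mathbf{I}_n\end{smallmatrix}\right)$; passing back to the standard basis multiplies on the right by $U_Q$, which yields $C$ and its block form $\left(\begin{smallmatrix}B\cdot{}^rU_Q\\ V\end{smallmatrix}\right)$. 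For (4), each $Q_I$ is a nonsingular $r\times r$ integral matrix, so $|\det Q_I|\,\Z^r\subseteq\mathcal{L}_c(Q_I)$ because $\det(Q_I)Q_I^{-1}=\operatorname{adj}(Q_I)$ is integral; taking the $\lcm$ over $I$ gives $\d_{\Si}\Z^r\subseteq\bigcap_I\mathcal{L}_c(Q_I)=\Pic(X)$, and pulling back along the surjection $d$ gives $\d_{\Si}\mathcal{W}_T(X)\subseteq\mathcal{C}_T(X)$. Conversely $\Pic(X)\subseteq\mathcal{L}_c(Q_I)$ forces $|\det Q_I|=[\Z^r:\mathcal{L}_c(Q_I)]$ to divide $[\Z^r:\Pic(X)]=[\Cl(X):\Pic(X)]$ for every $I$, whence $\d_{\Si}$ divides this index; the equality $[\Cl(X):\Pic(X)]=[\mathcal{W}_T(X):\mathcal{C}_T(X)]$ is the isomorphism $\mathcal{W}_T(X)/\mathcal{C}_T(X)\cong\Cl(X)/\Pic(X)$ induced by $d$.
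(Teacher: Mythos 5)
Your argument is correct and follows the paper's own proof in all essentials: part (1) via $U_Q\cdot Q^T=\left(\begin{smallmatrix}\mathbf{I}_r\\ \mathbf{0}_{n,r}\end{smallmatrix}\right)$ and hence $Q\cdot U_Q^T=(\mathbf{I}_r\,|\,\mathbf{0}_{r,n})$, part (2) by reducing local principality on each maximal cone $\left\langle V^I\right\rangle$ to the sublattice $\mathcal{L}_c(Q_I)$ (the paper phrases this with the subgroups $\mathcal{P}^I$ and $\mathcal{B}_I=d(\mathcal{P}^I)$ rather than kernels of restriction maps $\Cl(X)\to\Cl(U_\sigma)$, but it is the same computation), and part (4) by the same adjugate/index bookkeeping. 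The only cosmetic difference is in (3), where you compute $d^{-1}(\Pic(X))$ directly in the basis of $\mathcal{W}_T(X)$ given by the rows of $U_Q$, while the paper first checks $\mathcal{L}_r(C)\subseteq\mathcal{C}_T(X)$ from $Q\cdot C^T=(B^T\,|\,\mathbf{0}_{r,n})$ and then concludes by comparing the indices $[\mathcal{W}_T(X):\mathcal{L}_r(C)]=\det(B)=[\Cl(X):\Pic(X)]=[\mathcal{W}_T(X):\mathcal{C}_T(X)]$; both variants rest on the same identity.
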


\begin{remark} Let us underline that the HNF of a matrix and the associated switching matrix are obtained by a well known algorithm, based on Euclid's algorithm for greatest common divisor (see e.g. \cite[Algorithm 2.4.4]{Cohen}). This algorithm is implemented in many computer algebra procedures. Due to the following Proposition \ref{prop:QdaVeviceversa} and part (1) of  Theorem \ref{thm:generazione} the switching matrix $U_Q\in\GL_{n+r}(\Z)$ turns out to encode both the generators of the divisor class group (given by the upper $r$ rows) and the fan matrix (given by the lower $n$ rows).\\
  Parts (2) and (3) of Theorem \ref{thm:generazione} provide effective methods to compute generators of the subgroup $\Pic(X)$ in $\Cl(X)$ and the subgroup $\mathcal{C}_T(X)$ of torus invariant Cartier divisor in $\mathcal{W}_T(X)$. Consequently, Theorem \ref{thm:generazione} allows us to determine the torsion group $\mathcal{T}:=\mathcal{W}_T(X)/\mathcal{C}_T(X)\cong \Cl(X)/\Pic(X)$ of a PWS.

  Moreover, Theorem \ref{thm:generazione} enables us to explicitly describe all the morphisms appearing in the commutative diagram (\ref{div-diagram}), by giving all the representative matrices over suitable fixed bases. Namely, let us fix
  \begin{itemize}
    \item the basis of $\mathcal{W}_T(X)$ given by the torus invariant divisors $\{D_1,\ldots,D_{n+r}\}$,
    \item the basis of $\Cl(X)$ given by $\{d(L_1),\ldots,d(L_r)\}$, as in (\ref{generazione}),
  \end{itemize}
  then the matrices $Q,V,C,B$ defined in Theorem \ref{thm:generazione} completely describe diagram (\ref{div-diagram}) as follows:
\begin{equation}
\def\objectstyle{\displaystyle}
\xymatrix{
& 0 \ar[d] && 0 \ar[d] && 0 \ar[d] & \\
0 \ar[r] & M \ar[rr]^-{\left(
                                         \begin{array}{c}
                                           \mathbf{0}_{n,r}\,|\,\mathbf{I}_n  \\
                                         \end{array}
                                       \right)}\ar[d]^{\parallel} &&
\mathcal{C}_T(X)\cong\Pic(X)\oplus M \ar[rr]^-{\left(
                                         \begin{array}{c}
                                           \mathbf{I}_r\,|\,\mathbf{0}_{r,n}  \\
                                         \end{array}
                                       \right)}\ar[d]^-{C^T} && {\Pic(X)} \ar[r]\ar[d]^-{B^T} & 0 \\
0 \ar[r] & M \ar[rr]^-{div}_-{V^T}\ar[d] && \mathcal{W}_T(X)=\bigoplus_{j=1}^{n+r} \Z \cdot D_{j}\ar[d]
\ar[rr]^-d_-Q && \Cl(X)\ar[d] \ar[r] & 0 \\
 & 0\ar[rr] && \mathcal{T}\ar[d]\ar[rr]^-= && \mathcal{T}\ar[r]\ar[d]&0\\
 & &&0&&0& }
\end{equation}

\end{remark}

\begin{example}\label{ex} Let us give an account of what observed until now by considering the following  matrix in row echelon form: $Q=\left(
      \begin{array}{cccc}
        1 & 1 & 0 & 0 \\
        0 & 1 & 1 & 2 \\
      \end{array}
    \right)\,.
$

\noindent One easily check that $Q$ is reduced. The HNF of $Q^T$ is given by
\begin{equation*}
    \left(
       \begin{array}{cc}
         1 & 0 \\
         0 & 1 \\
         0 & 0 \\
         0 & 0 \\
       \end{array}
     \right)= \left(
                \begin{array}{cccc}
                  1 & 0 & 0 & 0 \\
                  -1 & 1 & 0 & 0 \\
                  1 & -1 & 1 & 0 \\
                  0 & 0 & 2 & -1 \\
                \end{array}
              \right)\cdot Q^T= U_Q\cdot Q^T\,.
\end{equation*}
The matrix $V=\left(
                                              \begin{array}{cccc}
                                                1 & -1 & 1 & 0 \\
                  0 & 0 & 2 & -1 \\
                                              \end{array}
                                            \right)$ is a Gale dual of $Q$: it is a fan matrix satisfying the equivalent conditions of Proposition~\ref{prop:PWS}. As observed in Remark \ref{rems:SF}(3), $|\SF(V)|=1$, which is that $V$ determines a unique associated 2--dimensional PWS $X=X(\Si)$ of rank 2, whose fan $\Si$ is described by all the faces of the following maximal cones:
\begin{equation*}
    \Si(2)=\left\{\s_1=\left\langle\begin{array}{cc}
                                     1 & 1 \\
                                     0 & 2
                                   \end{array}
    \right\rangle, \s_2=\left\langle\begin{array}{cc}
                                     1 & -1 \\
                                     2 & 0
                                   \end{array}
    \right\rangle, \s_3=\left\langle\begin{array}{cc}
                                     -1 & 0 \\
                                     0 & -1
                                   \end{array}
    \right\rangle, \s_4=\left\langle\begin{array}{cc}
                                     0 & 1 \\
                                     -1 & 0
                                   \end{array}
    \right\rangle\right\}
\end{equation*}
Theorem \ref{thm:generazione} guarantees that the upper two rows of $U_Q$ describe the generators of the divisor class group in terms of the torus invariant divisors $D_1,\ldots,D_4$, which is
\begin{equation*}
    \Cl(X)=\Z[d(L_1)]\oplus\Z[d(L_2)]\quad\text{with}\quad L_1:=D_1\ ,\ L_2:=-D_1+D_2\,.
\end{equation*}
Notice that the column $\q_j$ of $Q$ gives the components of the torus invariant $D_j$ in terms of the generators $L_i$, namely:
\begin{equation}\label{componenti}
    D_1=L_1\,,\,D_2=L_1+L_2\,,\,D_3=L_2\,,\,D_4=2L_2\,.
\end{equation}
Since $\d_{\Si}=\lcm\left(\det(\s_i)\,|\,1\leq i\leq 4\right)=2$, the second part of Theorem \ref{thm:generazione} shows that $2L_1, 2L_2$ are certainly Cartier divisors. Actually one can say more, by solving easy $2\times 2$ systems of linear equations and obtaining the \emph{Cartier index} $c_{\Si}(D_j)$ of all the divisors $D_j$, which is \emph{the least positive integer giving a Cartier multiple of $D_j$ in $X(\Si)$}, namely:
\begin{equation*}
    c_{\Si}(D_1)=2\,,c_{\Si}(D_2)=2\,,c_{\Si}(D_3)=2\,,c_{\Si}(D_2)=1\,.
\end{equation*}
Now we want to  explicitly exhibit the immersion $\Pic(X)\hookrightarrow \Cl(X)$ by using the method presented in \S~\ref{sssez:intersezione}. With the notations of Theorem \ref{thm:generazione} we have
$$\mathcal{I}_\Sigma=\{ I_1,I_2,I_3,I_4\}, \hbox{ with }$$
$$I_1=[2,4],\quad I_2=[1,4] , \quad I_3=[1,3],\quad I_4=[2,3]   .$$
For $j=1,\ldots 4$ we compute the transverse $Q_{I_j}^{T*}=Q_{I_j}^{-1}$:
$$Q_{I_1}^{-1}=\begin{pmatrix} 1 & 0\\ -\frac 1 2  & \frac 1 2\end{pmatrix},\quad
Q_{I_2}^{-1}=\begin{pmatrix} 1 & 0 \\ 0 & \frac 1 2\end{pmatrix},\quad
Q_{I_3}^{-1}=\begin{pmatrix} 1 & 0 \\ 0 & 1\end{pmatrix},\quad
Q_{I_4}^{-1}=\begin{pmatrix} 1 & 0\\ -1  & 1\end{pmatrix},\quad$$
We obtain
$$\HNF\left(\begin{pmatrix}
Q_{I_1}^{-1}\\Q_{I_2}^{-1}\\Q_{I_3}^{-1}\\Q_{I_4}^{-1}
\end{pmatrix}\right)= \begin{pmatrix} A\\ \mathbf{0}_2\end{pmatrix}, \hbox{ with } A=\begin{pmatrix} \frac 1 2 & 0\\ 0 & \frac 1 2\end{pmatrix} $$
so that a basis of $\Pic(X)$ in $\Cl(X)\cong \Z^2$ is given by the rows of the matrix
$A^*=\begin{pmatrix} 2&0\\ 0 & 2\end{pmatrix}$. Therefore $\Pic(X)=2 \Cl(X)$. \\
A basis of the subgroup of Cartier divisors $\mathcal{C}(X)\subseteq \mathcal{W}_T(X) \cong \Z^{n+r}$ is given by the rows of the matrix
$$C=\begin{pmatrix} 2 & 0 & 0 & 0\\ 0 & 2 & 0 & 0\\
0 & 0 & 1 & 0\\ 0 & 0 & 0 & 1\end{pmatrix}\cdot U_Q=
\begin{pmatrix}
                  2 & 0 & 0 & 0 \\
                  -2 & 2 & 0 & 0 \\
                  1 & -1 & 1 & 0 \\
                  0 & 0 & 2 & -1\end{pmatrix}.$$
\end{example}

\subsection{Further remarks and future purposes}\label{ssez:finale}

In Remark \ref{rem:motivazioni}(4) we already mentioned results \cite[Lemma~2.11]{BC} and \cite[Prop.~4.7]{Conrads} showing that a $\Q$--factorial complete toric variety of rank 1 is always a quotient of WPS. Actually this fact can be generalized to every rank $r$, by replacing the role of a WPS with that of a PWS i.e.
\begin{itemize}
  \item \emph{every $\Q$--factorial complete toric variety is always a quotient of a PWS.}
\end{itemize}
Clearly when $r=1$ this fact just gives the already mentioned results of Batyrev--Cox and Conrads, since a PWS of rank 1 is a WPS. The proof of the above stated result will be one of the main purposes of our next paper \cite{RT-QUOT} to which the interested reader is referred. Probably this is the main motivation for calling a $\Q$--factorial complete toric variety satisfying one of the equivalent conditions of Prop.~\ref{prop:PWS}, a \emph{poly weighted space}.

A final observation is that
\begin{itemize}
  \item \emph{on the contrary of a WPS, a general PWS is no more the Proj of a graded algebra.}
\end{itemize}
In fact, a PWS can be even a \emph{non--projective} variety, as the following example \ref{ex:noproj} will show. Anyway, in the forthcoming paper \cite{RTclass} we will prove that, under suitable conditions on the weight matrix, a projective PWS will be birational equivalent and isomorphic in codimension 1 to a toric cover of the Proj of a suitable weighted graded algebra, by means of a finite chain of \emph{wall crossings} in the secondary fan \cite[\S~15.3]{CLS}.

\begin{example}\label{ex:noproj} Consider the following reduced weight matrix $Q$ and its Gale dual $V$
\begin{equation*}
    Q=\left(
    \begin{array}{cccccc}
      1 & 1 & 0 & 0 & 1 & 0 \\
      0 & 1 & 1 & 1 & 0 & 0 \\
      0 & 0 & 0 & 1 & 1 & 1 \\
    \end{array}
  \right),\,\quad\quad V=\left(
                            \begin{array}{cccccc}
                              1 & 0 & 0 & 0 & -1 & 1 \\
                              0 & 1 & 0 & -1 & -1 & 2 \\
                              0 & 0 & 1 & -1 & 0 & 1 \\
                            \end{array}
                          \right)
\end{equation*}
$V$ satisfies  conditions of Proposition \ref{prop:PWS}. Then $X=X(\Si)$ is a PWS for every $\Si\in\SF(V)$. Then the reader can check that:
\begin{itemize}
  \item $\left|\SF(V)\right|=8$,
  \item only 6 among those 8 distinct fans give a projective PWS: in fact the \emph{Moving Cone} $\Mov(X)$ in the secondary fan \cite[(15.1.5)]{CLS} admits only 6 chambers,
  \item in particular the fan $\Si$ defined by taking the following maximal cones
  \begin{eqnarray*}
    \Si(3)&:=&\left\{\langle\v_2,\v_4,\v_5\rangle,\langle\v_2,\v_3,\v_5\rangle,\langle\v_1,\v_4,\v_5\rangle,\langle\v_1,\v_3,\v_5\rangle,\right.\\
    &&\left.\langle\v_2,\v_4,\v_6\rangle,\langle\v_2,\v_3,\v_6\rangle,\langle\v_1,\v_4,\v_6\rangle,\langle\v_1,\v_3,\v_6\rangle\right\}
  \end{eqnarray*}
and all their faces, defines a non--projective PWS, since it does not correspond to any chamber in $\Mov(X)$.
\end{itemize}
\end{example}

\section{Linear Algebra and Gale duality}\label{sez:GaleDuality}

\subsection{$\Z$-linear Gale duality}\label{ssez:Gale}

Let $V$ be a $n\times (n+r)$ integer matrix of rank $n$. If we think $V$ as a linear application from $\Z^{n+ r}$ to $\Z^n$ then $\ker(V)$ is a lattice in $\Z^{n+ r}$, of rank $r$ and without cotorsion. We shall denote ${\mathcal G}(V)$ the \emph{Gale dual matrix of $A$}, which is an integral $r\times (n+ r)$ matrix $Q$ such that ${\mathcal L}_r(Q)=\ker(V)$; it is well-defined up to left multiplication by $\GL_r(\Z)$.
Notice that ${\mathcal L}_r(Q)={\mathcal L}_r(V)^\perp$ w.r.t. the standard inner product in $\R^{n+r}$, and that $Q\cdot V^T=0$; moreover $Q$ can be characterized by the following
%\todo{ho aggiunto la propriet� universale}
universal property:
\begin{equation*} \hbox{if $A\in \mathbf{M}_r(\Z)$ is such that $A\cdot V^T=0$ then $A=\alpha\cdot Q$ for some matrix $\alpha\in\mathbf{M}_r(\Z)$.}\end{equation*}

\begin{proposition}\label{prop:Gale1} With the notation just introduced:
\begin{enumerate}
  \item ${\mathcal L}_r({\mathcal G}(V))$ does not have cotorsion in $\Z^{n+ r}$,
  \item %!!!Lea
  $\left(\Z^{n+ r}/{\mathcal L}_r(V)\right)_{\rm tors}\cong \left(\Z^n/{\mathcal L}_c(V)\right)_{\rm tors}$; in particular ${\mathcal L}_c({\mathcal G}(V))$ does not have cotorsion in $\Z^r$,
    \item there exists a matrix $\alpha\in \mathbf{M}_n(\Z)\cap \GL_n(\Q)$ such that  $V=\alpha\cdot {\mathcal G}({\mathcal G}(V)) $,
  \item ${\mathcal G}({\mathcal G}(V))\sim V$ if and only if ${\mathcal L}_r(V)$ does not have cotorsion in $\Z^{n+r}$.
\end{enumerate}
\end{proposition}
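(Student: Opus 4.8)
The plan is to treat the four assertions in sequence, since each later part draws on the earlier ones; the only genuinely new ingredient beyond bookkeeping is a double orthogonal complement identity needed for parts (3) and (4).

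For part (1), I would argue directly that $\mathcal{L}_r(\mathcal{G}(V))=\ker(V)$ is a saturated sublattice of $\Z^{n+r}$: if $k\mathbf{x}\in\ker(V)$ for some $\mathbf{x}\in\Z^{n+r}$ and some nonzero $k\in\Z$, then $0=V(k\mathbf{x})=k\,V(\mathbf{x})$ in $\Z^n$, whence $V(\mathbf{x})=0$ because $\Z^n$ is torsion-free; thus $\mathbf{x}\in\ker(V)$ and $\Z^{n+r}/\ker(V)$ has no torsion. This is essentially the observation already recorded when $\ker(V)$ was introduced.

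For part (2), the tool is the Smith Normal Form (Theorem \ref{teo:SNF}). I would choose $\alpha\in\GL_n(\Z)$ and $\beta\in\GL_{n+r}(\Z)$ with $\alpha\,V\,\beta=(D\mid\mathbf{0})$, $D=\diag(c_1,\ldots,c_n)$. Left multiplication by $\alpha$ leaves the row lattice of $V$ unchanged and only performs a coordinate change on $\Z^n$ (hence preserves $\Z^n/\mathcal{L}_c(V)$ up to isomorphism), while right multiplication by $\beta$ leaves the column lattice unchanged and realizes an automorphism of $\Z^{n+r}$ (hence preserves $\Z^{n+r}/\mathcal{L}_r(V)$ up to isomorphism). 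Reading off both quotients from the normal form $(D\mid\mathbf{0})$ gives $(\Z^{n+r}/\mathcal{L}_r(V))_{\rm tors}\cong\bigoplus_i\Z/c_i\Z\cong\Z^n/\mathcal{L}_c(V)$, the last group being already finite since $\mathcal{L}_c(V)$ has full rank $n$. The ``in particular'' clause is then obtained by applying this very isomorphism to the matrix $Q=\mathcal{G}(V)$ in place of $V$ (it too has full row rank $r$): the left-hand side $(\Z^{n+r}/\mathcal{L}_r(Q))_{\rm tors}$ vanishes by part (1), forcing $\Z^r/\mathcal{L}_c(Q)$ to be torsion-free.

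Parts (3) and (4) rest on identifying the row lattice of $\mathcal{G}(\mathcal{G}(V))=\mathcal{G}(Q)$. By definition $\mathcal{L}_r(\mathcal{G}(Q))=\ker(Q)=\mathcal{L}_r(Q)^{\perp}$, and since $\mathcal{L}_r(Q)=\mathcal{L}_r(V)^{\perp}$, this equals $(\mathcal{L}_r(V)^{\perp})^{\perp}$. The key step I would establish is that for any sublattice $\mathcal{L}\subseteq\Z^{n+r}$ one has $(\mathcal{L}^{\perp})^{\perp}=(\mathcal{L}\otimes\Q)\cap\Z^{n+r}$, i.e. the saturation of $\mathcal{L}$; this is where the real content lies, and proving it (or quoting Proposition \ref{prop:duallattice}) is the main obstacle, the rest being formal. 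Granting it, $\mathcal{L}_r(\mathcal{G}(Q))$ is the saturation of $\mathcal{L}_r(V)$, so the rows of $V$ lie in it and, both lattices having rank $n$, I can write $V=\alpha\,\mathcal{G}(Q)$ with $\alpha\in\mathbf{M}_n(\Z)\cap\GL_n(\Q)$, which proves (3); moreover $\alpha$ is the unique such matrix, the rows of $\mathcal{G}(Q)$ being $\Z$-independent, and $|\det\alpha|$ equals the index of $\mathcal{L}_r(V)$ in its saturation. For part (4), $\mathcal{G}(\mathcal{G}(V))\sim V$ means $V=U\,\mathcal{G}(Q)$ for some $U\in\GL_n(\Z)$, which by uniqueness forces $U=\alpha$; hence $\mathcal{G}(\mathcal{G}(V))\sim V$ holds exactly when $\alpha\in\GL_n(\Z)$, i.e. when $|\det\alpha|=1$, i.e. when $\mathcal{L}_r(V)$ coincides with its saturation, which is precisely the condition that $\mathcal{L}_r(V)$ have no cotorsion in $\Z^{n+r}$.
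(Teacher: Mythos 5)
Your proposal is correct and follows essentially the same route as the paper: part (1) from $\ker(V)$ being saturated, part (2) via the Smith Normal Form of $V$ and its transpose, part (3) from $\mathcal{L}_r(V)$ sitting with finite index inside $\mathcal{L}_r(\mathcal{G}(\mathcal{G}(V)))$, and part (4) by combining the earlier parts. You merely make explicit what the paper leaves as an assertion, namely that $\mathcal{L}_r(\mathcal{G}(\mathcal{G}(V)))$ is the saturation of $\mathcal{L}_r(V)$ via the double orthogonal complement identity.
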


\begin{proof} ($1$) follows from the fact that ${\mathcal L}_r({\mathcal G}(V))=\ker(V)$ is a kernel; ($2$): by Theorem \ref{teo:SNF} there exist invertible matrices $\alpha\in\GL_n(\Z)$, $\beta\in\GL_{n+r}(\Z)$ such that $S=\alpha V \beta$ is in $SNF$; if $c_1,\ldots,c_k$ are the non zero integers on the diagonal of $S$ then $\left(\Z^{n+ r}/{\mathcal L}_r(V)\right)_{\rm tors}\cong \Z/c_1\Z\oplus\ldots\oplus \Z/c_k\Z$; on the other hand the $SNF$ of $V^T$ is $S^T$; so that the cotorsion of ${\mathcal L}_c(V)$ and that of  ${\mathcal L}_c(V^T)$ are the same. ($3$) follows from the fact that ${\mathcal L}_r(V)$ is a subgroup of ${\mathcal L}_r({\mathcal G}({\mathcal G}(V)))$ of finite index. ($4$) follows from  ($1$) and ($3$).
\end{proof}

The following results probably give the core of $\Z$-linear Gale duality:
\begin{theorem}\label{thm:Gale2} Let $V=(\v_1,\ldots,\v_{n+r})$ be a $n \times (n+r)$ matrix of rank $n$, and $Q=\mathcal{G}(V)=(\w_1,\dots,\w_{n+r})$. Setting $I:=\{i_1,\ldots,i_k\}\subseteq\{1,\ldots,n+r\}$ let $V_I$ be the submatrix of $V$ given by the columns indexed by $I$ and $Q^I$ be the submatrix of $Q$ given by the columns indexed by $\{1,\ldots,n+r\}\backslash I$, i.e.
\begin{equation}\label{Gale_complementare}
 V_I:=\left(\v_i\ |\ i\in I\right)\quad ,\quad Q^I:= \left(\w_j\ |\ j\in\{1,\ldots,n+r\}\backslash I\right)\,.
\end{equation}
Then there is a natural isomorphism
$$ \Z^{n+r-k}/\mathcal{L}_r(Q^I)\cong\mathcal{L}_c(V)/\mathcal{L}_c(V_I) \ .$$
\end{theorem}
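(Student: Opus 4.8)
The plan is to exhibit an explicit surjection from $\Z^{n+r-k}$ onto the target quotient whose kernel is precisely $\mathcal{L}_r(Q^I)$, and then to conclude by the first isomorphism theorem. Throughout, write $J:=\{1,\ldots,n+r\}\backslash I$, so that $|J|=n+r-k$ and $Q^I=Q_J$ is the submatrix of $Q$ on the columns indexed by $J$. I index the standard basis of $\Z^{n+r-k}$ by $J$, writing it $\{\e_j\}_{j\in J}$, and define
$$\phi:\Z^{n+r-k}\longrightarrow \mathcal{L}_c(V)/\mathcal{L}_c(V_I),\qquad \phi(\e_j)=[\v_j],$$
the class of the $j$-th column of $V$ modulo $\mathcal{L}_c(V_I)$. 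Since $\mathcal{L}_c(V)$ is generated by all the columns $\v_1,\ldots,\v_{n+r}$ and those indexed by $I$ die in the quotient, the classes $[\v_j]$ with $j\in J$ already generate $\mathcal{L}_c(V)/\mathcal{L}_c(V_I)$, so $\phi$ is surjective.

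Next I would compute $\ker\phi$. A vector $(a_j)_{j\in J}$ lies in $\ker\phi$ if and only if $\sum_{j\in J}a_j\v_j\in\mathcal{L}_c(V_I)$, i.e. there exist integers $b_i$ ($i\in I$) with $\sum_{j\in J}a_j\v_j=\sum_{i\in I}b_i\v_i$. Assembling the vector $\mathbf{c}\in\Z^{n+r}$ with entries $c_j=a_j$ for $j\in J$ and $c_i=-b_i$ for $i\in I$, this relation reads exactly $V\mathbf{c}=0$, while the $J$-coordinates of $\mathbf{c}$ recover $(a_j)$. Hence $\ker\phi$ is the image of $\ker(V)$ under the coordinate projection $\pi_J:\Z^{n+r}\to\Z^{n+r-k}$ onto the entries indexed by $J$.

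The decisive input is Gale duality: by the defining property of $\mathcal{G}(V)$ one has $\ker(V)=\mathcal{L}_r(Q)$. Since $\pi_J$ carries each row of $Q$ to the corresponding row of $Q^I=Q_J$, it carries $\mathcal{L}_r(Q)$ onto $\mathcal{L}_r(Q^I)$, whence $\ker\phi=\mathcal{L}_r(Q^I)$ and the first isomorphism theorem gives the claim. I expect the only point needing genuine care to be this last identification of the projected kernel with $\mathcal{L}_r(Q^I)$ \emph{on the nose}, rather than with a finite-index subgroup of it; but because $\mathcal{L}_r(Q)$ is the \emph{full} row lattice of $Q$ (equal to $\ker V$, with no cotorsion by Proposition~\ref{prop:Gale1}(1)), every element of $\mathcal{L}_r(Q^I)$ is literally the $J$-restriction of an integral combination of rows of $Q$, so $\pi_J(\mathcal{L}_r(Q))=\mathcal{L}_r(Q^I)$ exactly. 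Everything else in the argument is formal bookkeeping with the exact sequence $0\to M\xrightarrow{V^T}\Z^{n+r}\to\Cl(X)\to0$ read columnwise.
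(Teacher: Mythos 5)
Your proof is correct and is essentially the same argument as the paper's: both define the surjection $\Z^{n+r-k}\to\mathcal{L}_c(V)/\mathcal{L}_c(V_I)$ sending the standard basis vectors to the classes of the columns $\v_j$, $j\notin I$, identify its kernel with the $J$-projection of $\ker(V)=\mathcal{L}_r(Q)$, and observe that this projection is exactly $\mathcal{L}_r(Q^I)$. Your explicit remark that the projection hits $\mathcal{L}_r(Q^I)$ on the nose (since rows of $Q$ project to rows of $Q^I$) is the same point the paper makes, just spelled out slightly more carefully.
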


\begin{proof}
We may assume $i_j=j$ for $j=1,\ldots,k$.   Notice that $ {\mathcal L}_r(Q^I)=\pi({\mathcal L}_r(Q))$
where $\pi$ is the projection of $\Z^{n+r}$ on the $k+1,\ldots,n+r$ components. Consider the  map $F: \Z^{n+r-k}\to {\mathcal L}_c(V)$ defined by $F(b_{k+1},\ldots,b_{n+r})=b_{k+1}{\bf v}_{k+1}+\ldots+b_{n+r}{\bf v}_{n+r}$. Since  ${\mathcal L}_c(V)/{\mathcal L}_c(V_I)$ is generated by the images of ${\bf v}_{k+1},\ldots,{\bf v}_{n+r}$, $F$ induces a surjective map $ \Z^{n+r-k}\to {\mathcal L}_c(V)/{\mathcal L}_c(V_I)$. Moreover
we have
$F(b_{k+1},\ldots,b_{n+r})\in  {\mathcal L}_c(V_I)$ if and only if there is a vector of the form ${\bf b}=(b_1,\ldots,b_k,b_{k+1},\ldots,b_n+r)\in \mathcal{L}_r(Q)$ and this happens if and only if
 $(b_{k+1},\ldots,b_{n+r})\in {\mathcal L}_r(Q^I)$;
thus $F$ induces the required isomorphism.
\end{proof}

\begin{corollary}\label{cor:Gale-on-det} Let $V$ be a $n\times(n+r)$ matrix of maximal rank $n$ and $Q=\mathcal{G}(V)$. If $|I|=n$ then $V_I$ and $Q^I$ are square matrices of order $n$ and $r$, respectively. Then
$$ [\Z^{n}\ :\ \mathcal{L}_c(V)]\ \left|\det(Q^I)\right| = \left|\det (V_I)\right|\ .$$
In particular if $\mathcal{L}_c(V)$ does not have cotorsion in $\Z^n$ then $\left|\det(Q^I)\right| = \left|\det(V_I)\right |$.
\end{corollary}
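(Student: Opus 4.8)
The plan is to read the Corollary off directly from Theorem~\ref{thm:Gale2} by specializing to $k=|I|=n$, and then to convert the resulting isomorphism of finite abelian groups into the determinantal identity through multiplicativity of lattice indices. The genuine content is already packaged in Theorem~\ref{thm:Gale2}; what remains is essentially bookkeeping with indices and the standard fact that a full-rank sublattice of $\Z^d$ presented as the row (or column) lattice of a square integer matrix $M$ has index $|\det M|$ in $\Z^d$ (this is the Elementary Divisor Theorem~\ref{teo:EDT}, equivalently the $\SNF$).

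First I would observe that when $|I|=n$ one has $n+r-k=r$, so $Q^I$ is an $r\times r$ matrix and $V_I$ an $n\times n$ matrix, and Theorem~\ref{thm:Gale2} specializes to the isomorphism
$$\Z^r/\mathcal{L}_r(Q^I)\cong \mathcal{L}_c(V)/\mathcal{L}_c(V_I)\,.$$
By the index-equals-determinant fact, the order of the left-hand group is $\left|\det(Q^I)\right|$ (when $Q^I$ is non-singular). For the right-hand group I would use the inclusion tower $\mathcal{L}_c(V_I)\subseteq \mathcal{L}_c(V)\subseteq \Z^n$, which holds because the columns of $V_I$ form a subset of those of $V$. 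Multiplicativity of indices along this tower, together with $[\Z^n:\mathcal{L}_c(V_I)]=\left|\det(V_I)\right|$, gives
$$\left|\det(V_I)\right|=[\Z^n:\mathcal{L}_c(V)]\cdot[\mathcal{L}_c(V):\mathcal{L}_c(V_I)]\,.$$
Substituting $[\mathcal{L}_c(V):\mathcal{L}_c(V_I)]=\left|\mathcal{L}_c(V)/\mathcal{L}_c(V_I)\right|=\left|\det(Q^I)\right|$ from the isomorphism above yields exactly the claimed formula $[\Z^n:\mathcal{L}_c(V)]\,\left|\det(Q^I)\right|=\left|\det(V_I)\right|$.

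The only point requiring a little care is the degenerate case, which I would dispatch first so that the index manipulations above are legitimate. If $V_I$ is singular, then $\mathcal{L}_c(V_I)$ has rank strictly less than $n$, so $\mathcal{L}_c(V)/\mathcal{L}_c(V_I)$ is infinite; by the isomorphism of Theorem~\ref{thm:Gale2} so is $\Z^r/\mathcal{L}_r(Q^I)$, forcing $Q^I$ singular as well, and both sides of the asserted equality vanish. If $V_I$ is non-singular the quotients are finite, $Q^I$ is non-singular, and the computation proceeds as above. Finally, for the ``in particular'' clause I would note that $\mathcal{L}_c(V)$ is a rank-$n$ sublattice of $\Z^n$, so having no cotorsion forces $\mathcal{L}_c(V)=\Z^n$, whence $[\Z^n:\mathcal{L}_c(V)]=1$ and $\left|\det(Q^I)\right|=\left|\det(V_I)\right|$. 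I do not anticipate a serious obstacle here: the whole argument is a one-line specialization of Theorem~\ref{thm:Gale2} followed by the multiplicative index identity, with the singular case handled by noting both determinants vanish.
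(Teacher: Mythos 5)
Your proposal is correct and follows essentially the same route as the paper: specialize Theorem~\ref{thm:Gale2} to $|I|=n$, apply multiplicativity of indices along $\mathcal{L}_c(V_I)\subseteq\mathcal{L}_c(V)\subseteq\Z^n$, and identify the indices with $\left|\det(V_I)\right|$ and $\left|\det(Q^I)\right|$, with the singular case handled by noting both determinants vanish simultaneously. Your treatment of the degenerate case and of the ``in particular'' clause is slightly more explicit than the paper's, but the argument is the same.
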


\begin{proof} We have
\begin{eqnarray*}
[\Z^n:\mathcal{L}_c(V_I)] &=& [\Z^n:\mathcal{L}_c(V)]\cdot [\mathcal{L}_c(V):\mathcal{L}_c(V_I)]\\
&=& [\Z^n:\mathcal{L}_c(V)]\cdot [\Z^{r}:\mathcal{L}_r(Q^I)],
\end{eqnarray*}
by Theorem \ref{thm:Gale2}. Since $V$ has maximal rank, $[\Z^n:\mathcal{L}_c(V)]<\infty$; then we see that $\det(Q^I)=0$ if and only if $\det(V_I)=0$; if both are non zero then the theorem follows from the fact that $[\Z^{r}:\mathcal{L}_r(Q^I)]=\left|\det(Q^I)\right|$ and $[\Z^n :\mathcal{L}_c(V_I)]=|\det(V_I)|$.
\end{proof}

\subsection{$F$--matrices and $W$--matrices}\label{ssez:F&W}\par
In this section we investigate some characterizing properties of fan matrices and their Gale duals.\\
Let $d,m,n,r $ be positive integers.

\begin{definition}\label{def:Fcompletezza&Wpositività} Let $A$ be a $d\times m$ matrix with integer entries.\\
We say that $A$ is \emph{$W$--positive} if ${\mathcal L}_r(A)$ has a basis consisting of positive vectors. \\
We say that $A$ is \emph{$F$--complete} if the cone generated by its columns is $\R^d$.
\end{definition}

Notice that $W$--positiveness and $F$--completeness are both invariant by the left action of $\GL_d(\Z)$.\\

Easy criteria for $F$--completeness and $W$--positiveness are given by the following propositions:
\begin{proposition}\cite[Theorem 3.6, ii)]{Davis}\label{prop:Scompleteness} Let $A$ be a $d\times m$ matrix with integer entries. Then $A$ is $F$--complete if and only if $\rk(A)=d$ and for each vector column ${\bf v}$ of $A$, $-{\bf v}$ is in the cone generated by the columns of $A$ different from $\v$.
\end{proposition}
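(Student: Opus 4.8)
The plan is to prove both implications directly from the definition of $F$--completeness, the key observation being that the hypothesis ``$-\v$ lies in the cone spanned by the remaining columns'' is precisely what allows one to reverse the sign of a coefficient in a linear combination. Throughout write $\v_1,\dots,\v_m$ for the columns of $A$ and let $C$ denote the cone they generate.

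First I would treat the forward implication. Assume $A$ is $F$--complete, so $C=\R^d$. Then the columns in particular span $\R^d$ as a vector space, whence $\rk(A)=d$. Fix a column $\v_i$. Since $-\v_i\in\R^d=C$, we may write $-\v_i=\sum_{j=1}^m r_j\v_j$ with all $r_j\ge 0$. Moving the $j=i$ term to the left gives $-(1+r_i)\v_i=\sum_{j\ne i}r_j\v_j$, and dividing by $1+r_i>0$ exhibits $-\v_i=\sum_{j\ne i}\frac{r_j}{1+r_i}\v_j$ as a nonnegative combination of the columns different from $\v_i$. This is the required property.

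Next the converse. Assume $\rk(A)=d$ and that each $-\v_i$ lies in the cone generated by $\{\v_j:j\ne i\}$. Because $\rk(A)=d$ the columns span $\R^d$ linearly, so an arbitrary $x\in\R^d$ can be written $x=\sum_{i}a_i\v_i$ with $a_i\in\R$. For each index with $a_i<0$ use the hypothesis to write $-\v_i=\sum_{j\ne i}c_{ij}\v_j$ with $c_{ij}\ge0$, so that $a_i\v_i=|a_i|\sum_{j\ne i}c_{ij}\v_j$ is a nonnegative combination of the remaining columns. Substituting these expressions for every negative--coefficient term turns the representation of $x$ into a nonnegative linear combination of $\v_1,\dots,\v_m$; hence $x\in C$. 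As $x$ was arbitrary, $C=\R^d$ and $A$ is $F$--complete.

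The argument is elementary and I do not expect a genuine obstacle; the only point requiring care is the bookkeeping in the forward direction, where one must absorb the self--term $r_i\v_i$ to guarantee that $-\v_i$ is expressed using only the \emph{other} columns (the coefficient $1+r_i$ is automatically positive, so no degeneracy arises). One could alternatively phrase everything in terms of supporting hyperplanes and the fact that a cone equals $\R^d$ iff it has no proper supporting halfspace, but the direct manipulation above is shorter and self--contained.
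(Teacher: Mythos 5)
Your argument is correct in both directions: the absorption of the self-term $r_i\v_i$ via division by $1+r_i>0$ in the forward implication is exactly the right move, and the substitution of $a_i\v_i=|a_i|\sum_{j\ne i}c_{ij}\v_j$ for negative coefficients in the converse cleanly produces a nonnegative representation of an arbitrary $x\in\R^d$. Note, however, that the paper itself offers no proof of this proposition --- it is quoted verbatim from Davis \cite[Theorem 3.6, ii)]{Davis} --- so there is no ``paper's approach'' to compare against; your elementary, self-contained derivation is a perfectly adequate substitute for the citation, and nothing in it needs repair.
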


\begin{proposition}\label{prop:pos} Let $A$ be a $d\times m$ matrix with integer entries such that  each column of $A$ is non zero.
Then $A$ is $W$-positive if and only if there exists a strictly positive vector in ${\mathcal L}_r(A)$.
\end{proposition}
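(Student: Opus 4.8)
The statement being a biconditional, the plan is to prove the two implications separately; the forward implication ($W$--positive $\Rightarrow$ existence of a strictly positive vector) is quick, whereas the converse is where the real work lies.

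For the direction ``$W$--positive $\Rightarrow$ strictly positive vector'', I would start from a basis $\mathbf{b}_1,\ldots,\mathbf{b}_k$ of $\mathcal{L}_r(A)$ consisting of positive vectors and consider their sum $\mathbf{s}:=\mathbf{b}_1+\cdots+\mathbf{b}_k\in\mathcal{L}_r(A)$, which is clearly positive. The point is to upgrade ``positive'' to ``strictly positive'', and this is exactly where the hypothesis that no column of $A$ vanishes enters. Fix a coordinate index $j$; since the $j$-th column of $A$ is nonzero, some row of $A$, hence some element of $\mathcal{L}_r(A)$, has a nonzero $j$-th entry, so the $\mathbf{b}_i$ cannot all vanish in the $j$-th coordinate. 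Thus some $\mathbf{b}_i$ has a nonzero $j$-th entry, which, being positive, is strictly positive; as the remaining summands contribute nonnegatively, the $j$-th entry of $\mathbf{s}$ is $>0$. Letting $j$ range over all coordinates shows $\mathbf{s}$ is strictly positive.

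For the converse, suppose $\mathbf{w}\in\mathcal{L}_r(A)$ is strictly positive; the plan is to build a strictly positive basis in three moves. First I would pass to a primitive witness: writing $\mathbf{w}=\sum_i c_i\mathbf{a}_i$ in an arbitrary basis $\mathbf{a}_1,\ldots,\mathbf{a}_k$ of $\mathcal{L}_r(A)$ and setting $g:=\gcd(c_1,\ldots,c_k)$, the vector $\mathbf{w}':=\mathbf{w}/g$ lies in $\mathcal{L}_r(A)$, is again strictly positive, and is primitive. Second, because $\mathbf{w}'$ is primitive it extends to a basis $\mathbf{w}',\mathbf{u}_2,\ldots,\mathbf{u}_k$ of $\mathcal{L}_r(A)$. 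Third, I would replace each $\mathbf{u}_i$ by $\mathbf{u}_i+N_i\mathbf{w}'$ for sufficiently large positive integers $N_i$: since $\mathbf{w}'$ is strictly positive, for $N_i\gg 0$ every entry of $\mathbf{u}_i+N_i\mathbf{w}'$ becomes positive, while the change-of-basis matrix from $(\mathbf{w}',\mathbf{u}_2,\ldots,\mathbf{u}_k)$ to $(\mathbf{w}',\mathbf{u}_2+N_2\mathbf{w}',\ldots,\mathbf{u}_k+N_k\mathbf{w}')$ is unipotent (the identity with the extra entries $N_i$ sitting only in the first column), hence unimodular. The resulting vectors therefore form a basis of $\mathcal{L}_r(A)$ consisting of strictly positive, in particular positive, vectors, establishing $W$--positivity.

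The step I expect to be the crux is this converse, and specifically the need to isolate a single primitive strictly positive vector and shift only the \emph{other} basis vectors by multiples of it. A naive attempt to add a common multiple of $\mathbf{w}$ to every basis vector at once does not preserve the determinant: the change-of-basis matrix would be $\mathbf{I}+N\1\mathbf{c}^{T}$, whose determinant equals $1+N\sum_i c_i$, rarely $\pm1$. Thus the asymmetric, triangular shift is essential, and the primitivity reduction is precisely what guarantees that $\mathbf{w}'$ can serve as one member of a basis in the first place.
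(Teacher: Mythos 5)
Your proposal is correct and follows essentially the same route as the paper: sum a positive basis and use the nonvanishing of columns for the forward direction, and for the converse extract a primitive strictly positive vector (the paper does this via the elementary divisor theorem, you via dividing by the gcd of coordinates and extending to a basis — the same maneuver), then add large multiples of it to the remaining basis vectors. Your explicit check that the triangular shift is unimodular is a detail the paper leaves implicit, but there is no substantive difference.
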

\begin{proof} If $A$ is $W$-positive then ${\mathcal L}_r(A)$ is generated over $\Z$ by a finite set of positive vectors; by summing up all of them and using the fact that each column of $A$ is non zero we get a strictly positive vector. Conversely, let ${\bf a}$ be a strictly positive vector in ${\mathcal L}_r(A)$. By the elementary divisor theorem there exists a $\Z$-basis $({\bf a_1},{\bf a}_2,\ldots,{\bf a}_r)$ of ${\mathcal L}_r(A)$ such that ${\bf a}$ is a positive multiple of ${\bf a}_1$. Take $k\in \N$ such that ${\bf a'}_i={\bf a}_i+k{\bf a}_1$ is positive for $i=2,\ldots,r$. Then $({\bf a}_1,{\bf a'}_2,\ldots,{\bf a'}_r)$ is a positive basis of ${\mathcal L}_r(A)$.
\end{proof}

\begin{corollary}\label{cor:AGGA}
Let $A$ be a $n\times (n+r)$ matrix with integer entries of rank $n$. Then
\begin{itemize}
\item[i)] $A$ is $F$-complete if and only if ${\mathcal G}({\mathcal G}(A))$ is $F$-complete;
\item[ii)] $A$ is $W$-positive if and only if ${\mathcal G}({\mathcal G}(A))$ is $W$-positive.
\end{itemize}
\end{corollary}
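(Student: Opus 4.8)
The plan is to run everything off the single relation furnished by Proposition \ref{prop:Gale1}(3): there is a matrix $\alpha\in\mathbf{M}_n(\Z)\cap\GL_n(\Q)$ with $A=\alpha\cdot\mathcal{G}(\mathcal{G}(A))$. Write $B:=\mathcal{G}(\mathcal{G}(A))$; then $A$ and $B$ are both $n\times(n+r)$ matrices, and $B$ has rank $n$ since $A=\alpha B$ has rank $n$ and $\alpha$ is invertible over $\Q$. This relation has two immediate consequences I would record first. Because $\alpha$ is integral, each row of $A=\alpha B$ is a $\Z$-combination of the rows of $B$, so $\mathcal{L}_r(A)\subseteq\mathcal{L}_r(B)$; and because $\alpha$ is invertible over $\Q$, this inclusion has finite index (equal to $|\det\alpha|$, the rows of $B$ forming a $\Z$-basis of $\mathcal{L}_r(B)$). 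Dually, since $\alpha$ is invertible over $\R$, the $j$-th column of $A$ is $\alpha$ applied to the $j$-th column of $B$, so $A$ and $B$ have vanishing columns in exactly the same positions.

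For part (i) I would argue entirely at the level of cones. Viewing $\alpha$ as a linear automorphism of $\R^n$, the columns of $A$ are the $\alpha$-images of the columns of $B$, hence the cone generated by the columns of $A$ is the $\alpha$-image of the cone generated by the columns of $B$. Since $\alpha(\R^n)=\R^n$, one of these cones equals $\R^n$ precisely when the other does; by Definition \ref{def:Fcompletezza&Wpositivit�} this is exactly the asserted equivalence (the rank-$n$ requirement underlying Proposition \ref{prop:Scompleteness} holding on both sides by the remark above).

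For part (ii) I would reduce $W$-positivity to the strict-positivity criterion of Proposition \ref{prop:pos}. First I handle possible zero columns: if the columns indexed by some set $J$ vanish (the same $J$ for $A$ and $B$ by the observation above), I delete them to obtain $A',B'$ still satisfying $A'=\alpha B'$. Deleting coordinates that are identically zero on a row lattice does not affect whether that lattice has a positive basis, so $A$ (resp.\ $B$) is $W$-positive iff $A'$ (resp.\ $B'$) is. Now $A'$ and $B'$ have no zero column, so Proposition \ref{prop:pos} replaces $W$-positivity by the existence of a strictly positive vector in the row lattice. Since $\mathcal{L}_r(A')\subseteq\mathcal{L}_r(B')$ with finite index, a strictly positive vector of $\mathcal{L}_r(A')$ automatically lies in $\mathcal{L}_r(B')$, while a strictly positive $\mathbf{b}\in\mathcal{L}_r(B')$ has an integer multiple $m\mathbf{b}\in\mathcal{L}_r(A')$ which is again strictly positive; hence the two row lattices contain strictly positive vectors simultaneously, giving (ii).

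The one genuinely delicate point is the zero-column bookkeeping in (ii): Proposition \ref{prop:pos} is stated only for matrices with nonzero columns, so the reduction deleting the vanishing coordinates must be set up carefully (and the matching of zero-column positions across $A$ and $B$ is what makes $A'=\alpha B'$ hold). Everything else follows formally from the relation $A=\alpha\cdot\mathcal{G}(\mathcal{G}(A))$ together with the elementary fact that a finite-index inclusion of lattices is transparent to both the cone generated by the columns and the existence of strictly positive row vectors.
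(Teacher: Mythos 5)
Your argument is correct and is essentially the paper's proof, which simply cites Propositions \ref{prop:Scompleteness} and \ref{prop:pos} together with the fact that $\mathcal{L}_r(A)$ is a finite-index sublattice of $\mathcal{L}_r(\mathcal{G}(\mathcal{G}(A)))$. The only (harmless) deviations are that in part (i) you argue directly that the column cone of $A$ is the $\alpha$-image of that of $\mathcal{G}(\mathcal{G}(A))$ rather than invoking the criterion of Proposition \ref{prop:Scompleteness}, and that you spell out the zero-column reduction needed to apply Proposition \ref{prop:pos}.
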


\begin{proof} It follows directly from Propositions \ref{prop:Scompleteness} and \ref{prop:pos} by recalling  that ${\mathcal L}_r(A)$ is a sublattice of ${\mathcal L}_r({\mathcal G}({\mathcal G}(A)))$ of finite index.
\end{proof}

The following proposition establishes a duality between the notions of  $F$--com\-ple\-te\-ness and $W$-positiveness.

\begin{theorem}\label{thm:Wpositiveness} Let $V$ be a $n\times (n+r)$ matrix of maximal rank $n$ such that  each column of $V$ is non zero. Let $Q={\mathcal G}(V)$, so that $Q$ is an $r\times (n+r)$ integral matrix.
Then $V$ is $F$--complete ($W$--positive) if and only if $Q$ is $W$--positive ($F$--complete).
\end{theorem}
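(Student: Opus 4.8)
The plan is to reduce both equivalences to a single observation. For any $d\times m$ integer matrix $A=(\mathbf{a}_1,\dots,\mathbf{a}_m)$ of maximal rank $d$, I claim that $A$ is $F$--complete if and only if $\ker(A)=\{\mathbf{x}\in\Z^m\mid A\mathbf{x}^T=\mathbf{0}\}$ contains a strictly positive vector. In one direction, $F$--completeness means the cone spanned by the columns is all of $\R^d$, so $-(\mathbf{a}_1+\dots+\mathbf{a}_m)=\sum_i\mu_i\mathbf{a}_i$ with every $\mu_i\geq 0$, giving the strictly positive relation $\sum_i(1+\mu_i)\mathbf{a}_i=\mathbf{0}$; in the other direction a strictly positive relation $\sum_i a_i\mathbf{a}_i=\mathbf{0}$ exhibits, for each $j$, the vector $-\mathbf{a}_j=\sum_{i\neq j}(a_i/a_j)\mathbf{a}_i$ inside the cone generated by the remaining columns, so $A$ is $F$--complete by Proposition \ref{prop:Scompleteness}. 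This strictly positive vector is precisely the datum that Proposition \ref{prop:pos} ties to $W$--positiveness, and it is the hinge of the whole argument.

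Granting this bridge, I would first establish ``$V$ is $W$--positive $\Leftrightarrow$ $Q$ is $F$--complete''. Since every column of $V$ is non zero, Proposition \ref{prop:pos} gives that $V$ is $W$--positive exactly when $\mathcal{L}_r(V)$ contains a strictly positive vector. As $Q\cdot V^T=0$ we have $\mathcal{L}_r(V)\subseteq\ker(Q)=\mathcal{L}_r(\mathcal{G}(Q))$, an inclusion of finite index by Proposition \ref{prop:Gale1}; hence $\mathcal{L}_r(V)$ contains a strictly positive vector if and only if $\ker(Q)$ does (any such vector of $\ker(Q)$ has a positive integer multiple lying in $\mathcal{L}_r(V)$, and the reverse inclusion is trivial). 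Finally the bridge applied to the rank $r$ matrix $Q$ turns the last condition into $F$--completeness of $Q$. Chaining the three equivalences proves the statement using only the hypothesis that the columns of $V$ are non zero.

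For ``$V$ is $F$--complete $\Leftrightarrow$ $Q$ is $W$--positive'' I would argue dually from $\ker(V)=\mathcal{L}_r(Q)$: the bridge applied to $V$ says that $V$ is $F$--complete exactly when $\mathcal{L}_r(Q)$ contains a strictly positive vector, and Proposition \ref{prop:pos} applied to $Q$ rephrases this as $W$--positiveness of $Q$. Alternatively this equivalence can be deduced from the previous one applied to the pair $(\mathcal{G}(Q),Q)=(\mathcal{G}(\mathcal{G}(V)),Q)$, invoking Corollary \ref{cor:AGGA} to replace $F$--completeness of $\mathcal{G}(\mathcal{G}(V))$ by that of $V$.

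The hard part, and the only place where the hypotheses must be watched, is that Proposition \ref{prop:pos} requires the matrix to which it is applied to have non zero columns, a condition imposed only on $V$. This is harmless except in the implication $Q$ $W$--positive $\Rightarrow$ $V$ $F$--complete, where one applies it to $Q$ and must therefore know that $Q$ has no zero column. A column $\w_j$ of $Q$ vanishes exactly when $\v_j$ is linearly independent of the remaining columns of $V$; in that case $-\v_j$ lies outside the linear span of the other columns, a fortiori outside their cone, so $V$ is not $F$--complete. Hence such ``rigid'' columns of $V$ must be excluded --- either checked absent in the case at hand or incorporated as a standing assumption that $Q$, too, has non zero columns --- and once this is known the four implications follow formally from the bridge together with Propositions \ref{prop:Scompleteness} and \ref{prop:pos}.
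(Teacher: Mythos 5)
Your proof is correct, and at bottom it runs on the same fuel as the paper's: Propositions \ref{prop:Scompleteness} and \ref{prop:pos} together with the two lattice identities $\mathcal{L}_r(Q)=\ker(V)$ and $\mathcal{L}_r(V)\subseteq\ker(Q)=\mathcal{L}_r(\mathcal{G}(Q))$ (finite index). The packaging differs: you isolate one ``bridge'' ($A$ is $F$--complete iff $\ker(A)$ contains a strictly positive vector) and prove all four implications directly, whereas the paper proves the two implications starting from a hypothesis on $V$ and then disposes of the converses by appealing to Corollary \ref{cor:AGGA}. Your organization is arguably cleaner; the only step you should spell out in the bridge is the passage from the real non-negative relation $\sum_i(1+\mu_i)\mathbf{a}_i=\mathbf{0}$ to an integral strictly positive element of $\ker(A)$ (the strictly positive locus is open and non-empty in $\ker_{\R}(A)$, and $\ker_{\Q}(A)$ is dense there, so scale a rational point) --- but the paper's own proof elides exactly the same step.

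The genuinely valuable part of your write-up is the last paragraph, and you should not soften it: the implication ``$Q$ $W$--positive $\Rightarrow$ $V$ $F$--complete'' really does fail under the stated hypotheses, so the theorem as written needs the extra assumption that $Q$ has no zero column (equivalently, that no column of $V$ lies outside the span of the remaining ones). Concretely, take
$V=\left(\begin{smallmatrix}1&-1&0\\0&0&1\end{smallmatrix}\right)$, which has rank $2$ and non-zero columns; then $\ker(V)=\Z\cdot(1,1,0)$, so $Q=(1,1,0)$ is $W$--positive, yet the columns of $V$ span only a half-plane. The paper's proof does not escape this: the sentence ``the converse assertions then follow from Corollary \ref{cor:AGGA}'' tacitly applies the already-proved implication ``$W$--positive $\Rightarrow$ Gale dual is $F$--complete'' to the matrix $Q$, and that implication was proved under the hypothesis that every column of the input matrix is non-zero --- a hypothesis imposed on $V$ but never on $Q$. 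In the paper's actual uses of the theorem the issue is invisible, since there $Q$ is a $W$--matrix (condition (d) of Definition \ref{def:Wmatrix}) or $V$ is an $F$--matrix, but as a free-standing linear-algebra statement it needs your correction. Well spotted.
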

\begin{proof}
Let ${\bf v}_1,\ldots,{\bf v}_{n+r}\in \Z^n$ be the columns of $V$; then
$${\mathcal L}_r(Q)=\{(b_1,\ldots,b_{n+r})\in \Z^{n+r}\ |\  b_1{\bf v}_1+\cdots+b_{n+r}{\bf v}_{n+r}=0\}.$$
Suppose that $V$ is $F$--complete: then, by Proposition \ref{prop:Scompleteness}, for  $i=1,\ldots,n+r$ there is a positive vector ${\mathbf u_i}\in {\mathcal L}_r(Q)$ such that its $i$-th entry is not zero. By summing up over all $i$  we get a strictly positive vector  ${\mathbf b}\in{\mathcal L}_r(Q) $, so that $Q$ is $W$--positive by Proposition \ref{prop:pos}.\\
Suppose now that $V$ is $W$--positive. By multiplying by a suitable matrix in $\GL_r(\Z)$ we can suppose that all the coefficients of $V$ are in $\N$.
Let ${\bf w}_1,\ldots,{\bf w}_{n+r}\in \Z^r$ be the columns of $Q$. Since each column of $V$ is non zero, for $i=1,\ldots,n+r$ there is a row in $V$ having a non zero coefficient at place $i$; moreover ${\mathcal L}_r(V)\subseteq {\mathcal L}_r({\mathcal G}(Q))$. This means that $-{\bf w}_i$
 is in the cone generated by the vectors ${\bf w}_j, j\not =i$ and therefore $Q$ is $F$--complete.\\
 The converse assertions then follow from Corollary \ref{cor:AGGA}.
\end{proof}

\begin{definition}\label{def:Wmatrix} A \emph{$W$--matrix} is an $r\times (n+r)$ matrix $Q$  with integer entries, satisfying the following conditions:
\begin{itemize}
\item[a)] $\rk(Q)=r$;
\item[b)] ${\mathcal L}_r(Q)$ does not have cotorsion in $\Z^{n+r}$;
\item[c)] $Q$ is $W$--positive
\item[d)] Every column of $Q$ is non-zero.
\item[e)] ${\mathcal L}_r(Q)$   does not contain vectors of the form $(0,\ldots,0,1,0,\ldots,0)$.
\item[f)]  ${\mathcal L}_r(Q)$ does not contain vectors of the form $(0,a,0,\ldots,0,b,0,\ldots,0)$, with $ab<0$.
\end{itemize}
\end{definition}

Conditions equivalent to the holdness of both conditions (a) and (b), which are useful in applications,  are
\begin{itemize}
\item $\HNF(Q^T)=\left (\begin{array}{l} {\bf I}_r\\ {\bf 0}_{n,r} \end{array}\right),$
where $ {\bf I}_r$ is the identity matrix in $\M_r(\R)$ and  ${\bf 0}_{n,r}$ is the $n\times r$ zero-matrix.\\
\item $Q\sim {\mathcal G}({\mathcal G}(Q))$.
\end{itemize}

\begin{definition}\label{def:Fmatrix} An \emph{$F$--matrix} is a $n\times (n+r)$ matrix  $V$ with integer entries, satisfying the conditions:
\begin{itemize}
\item[a)] $\rk(V)=n$;
\item[b)] $V$ is $F$--complete;
\item[c)] all the columns of $V$ are non zero;
\item[d)] if ${\bf  v}$ is a column of $V$, then $V$ does not contain another column of the form $\lambda  {\bf  v}$ where $\lambda>0$ is real number.
\end{itemize}
A \emph{$CF$--matrix} is a $F$-matrix satisfying the further requirement
\begin{itemize}
\item[e)] ${\mathcal L}_c(V)=\Z^n$.
\end{itemize}
\end{definition}

\begin{proposition}\label{prop:quisopra}
$V$ is an $F$--matrix if and only if ${\mathcal G}({\mathcal G}(V))$ is a $CF$--matrix.
\end{proposition}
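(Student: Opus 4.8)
The plan is to set $W:={\mathcal G}({\mathcal G}(V))$ and to exploit the relation $V=\alpha\cdot W$ with $\alpha\in\M_n(\Z)\cap\GL_n(\Q)$ furnished by Proposition \ref{prop:Gale1}(3). Since ${\mathcal G}(V)$ is an $r\times(n+r)$ matrix of rank $r$, its Gale dual $W$ is again an $n\times(n+r)$ matrix of rank $n$; thus both $V$ and $W$ satisfy condition (a) of Definition \ref{def:Fmatrix} automatically, and the whole content lies in conditions (b)--(e). I would first establish the auxiliary claim that $V$ is an $F$--matrix if and only if $W$ is an $F$--matrix, and then show that $W$ \emph{always} satisfies the extra condition (e); together these give the proposition, since for $W$ being a $CF$--matrix is the same as being an $F$--matrix once (e) is known to hold for free.

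For the auxiliary claim I would transfer conditions (b), (c), (d) across the equality $V=\alpha W$. Condition (b) ($F$--completeness) passes through Corollary \ref{cor:AGGA}(i), which states exactly that $V$ is $F$--complete if and only if $W={\mathcal G}({\mathcal G}(V))$ is. For (c) and (d) the key point is that $\alpha\in\GL_n(\Q)$, so the map $x\mapsto\alpha x$ is injective: the $i$--th column of $V$ is $\alpha$ times the $i$--th column of $W$, whence a column of $V$ vanishes exactly when the corresponding column of $W$ does (this is (c)), and $v_j=\lambda v_i$ with $\lambda>0$ holds exactly when $w_j=\lambda w_i$ (this is (d)). Hence (b), (c), (d) hold for $V$ precisely when they hold for $W$, proving the auxiliary claim.

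The heart of the argument is showing that $W$ automatically satisfies (e), i.e. ${\mathcal L}_c(W)=\Z^n$. Here I would use that $W$ is itself a Gale dual: by Proposition \ref{prop:Gale1}(1) the row lattice ${\mathcal L}_r(W)={\mathcal L}_r({\mathcal G}({\mathcal G}(V)))$ has no cotorsion in $\Z^{n+r}$, so $\bigl(\Z^{n+r}/{\mathcal L}_r(W)\bigr)_{\rm tors}=0$. Applying Proposition \ref{prop:Gale1}(2) to the matrix $W$ yields $\bigl(\Z^n/{\mathcal L}_c(W)\bigr)_{\rm tors}\cong\bigl(\Z^{n+r}/{\mathcal L}_r(W)\bigr)_{\rm tors}=0$. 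Since $\rk(W)=n$, the quotient $\Z^n/{\mathcal L}_c(W)$ is finite, hence equals its own torsion subgroup, and being torsion free it is trivial, which is exactly ${\mathcal L}_c(W)=\Z^n$. I expect this to be the main (if modest) obstacle, as it is the only step where the fine structure of Gale duality—rather than mere $\Q$--invertibility of $\alpha$—enters, and it is precisely what forces the double dual into the smaller class of $CF$--matrices.

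Finally I would assemble the two halves: if $V$ is an $F$--matrix then $W$ satisfies (b)--(d) by the auxiliary claim and (e) by the previous paragraph, so $W$ is a $CF$--matrix; conversely, if $W$ is a $CF$--matrix then in particular it is an $F$--matrix, whence $V$ is an $F$--matrix.
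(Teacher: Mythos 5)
Your proposal is correct and follows essentially the same route as the paper, which simply declares the verifications immediate from Proposition \ref{prop:Gale1}(3) (giving $V=\alpha\cdot{\mathcal G}({\mathcal G}(V))$ with $\alpha\in\GL_n(\Q)$) together with the $F$--completeness invariance under the double Gale dual; your write-up just makes the transfer of conditions (b)--(d) along $\alpha$ and the cotorsion argument for condition (e) explicit.
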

\begin{proof} All verifications are immediate,  applying Proposition \ref{prop:Gale1} ($3$) and Theorem \ref{thm:Wpositiveness}.
\end{proof}

 \begin{proposition}\label{prop:W-F-Gale} Let $A$ be an $n\times (n+r)$ integer matrix of rank $n$. Then
 \begin{enumerate}
 \item If $A$ is a $W$--matrix then ${\mathcal G}(A)$ is a $CF$--matrix;
 \item   $A$ is an $F$--matrix if and only if ${\mathcal G}(A)$ is a $W$--matrix.
\end{enumerate}
 \end{proposition}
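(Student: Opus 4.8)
The plan is to verify the defining conditions one at a time, writing $Q:={\mathcal G}(A)$ throughout (an $r\times(n+r)$ matrix of rank $r$ with ${\mathcal L}_r(Q)=\ker(A)$), and to obtain the backward implication of (2) for free from part (1) together with Proposition~\ref{prop:quisopra}. The statements (a), (b) (rank and no cotorsion of the row lattice) and the $W$-positivity/$F$-completeness conditions are essentially immediate transcriptions of Proposition~\ref{prop:Gale1}(1),(2) and Theorem~\ref{thm:Wpositiveness}; the real content is the bookkeeping for the columnwise conditions.

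For part (1), assume $A$ is a $W$-matrix. Conditions (a) and (e) of the $CF$-matrix come straight from the construction of the Gale dual and from Proposition~\ref{prop:Gale1}(2) (no cotorsion of ${\mathcal L}_c(Q)$ in $\Z^r$). Condition (b), the $F$-completeness of $Q$, follows from Theorem~\ref{thm:Wpositiveness}: $A$ is $W$-positive by condition (c) of the $W$-matrix and has non-zero columns by condition (d), so its Gale dual is $F$-complete. The substance is in conditions (c) and (d), and here I would isolate a short \emph{dictionary}: the $j$-th column of $Q={\mathcal G}(A)$ vanishes if and only if $\mathbf{e}_j\perp\ker(A)$, i.e. $\mathbf{e}_j\in({\mathcal L}_r(A)\otimes\Q)$, the $\Q$-row-span of $A$. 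Since a $W$-matrix has no cotorsion (condition (b)), $\mathbf{e}_j\in{\mathcal L}_r(A)\otimes\Q$ together with $\mathbf{e}_j\in\Z^{n+r}$ forces $\mathbf{e}_j\in{\mathcal L}_r(A)$, which is excluded by condition (e); this gives (c). For (d), if one column of $Q$ equals $\lambda>0$ times another, then (the chosen column being non-zero, $\lambda$ is rational) clearing denominators produces a two-entry vector $b\,\mathbf{e}_i-a\,\mathbf{e}_j$ with $a,b>0$ lying in $\ker(Q)={\mathcal L}_r({\mathcal G}({\mathcal G}(A)))={\mathcal L}_r(A)$, the last equality by Proposition~\ref{prop:Gale1}(4) since $A$ has no cotorsion; this sign-opposite vector is forbidden by condition (f). Hence $Q$ is a $CF$-matrix.

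For part (2), the forward implication is a direct check of the $W$-matrix conditions for $Q$, and here the three lattice conditions are even cleaner because they concern ${\mathcal L}_r(Q)=\ker(A)$ directly: a vector $\mathbf{e}_j\in\ker(A)$ would say the $j$-th column of $A$ is zero (against $F$-matrix condition (c)), giving (e), and a sign-opposite two-entry vector in $\ker(A)$ would exhibit one column of $A$ as a positive multiple of another (against condition (d)), giving (f). Conditions (a), (b) are Proposition~\ref{prop:Gale1}(1), and (c), the $W$-positivity of $Q$, is Theorem~\ref{thm:Wpositiveness} applied to the $F$-complete matrix $A$. For the backward implication I would not repeat any computation: if $Q={\mathcal G}(A)$ is a $W$-matrix, then part (1), read with $Q$ in the role of $A$, shows ${\mathcal G}(Q)={\mathcal G}({\mathcal G}(A))$ is a $CF$-matrix, so $A$ is an $F$-matrix by Proposition~\ref{prop:quisopra}.

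The step I expect to be the main obstacle is the dictionary tying the lattice-theoretic conditions (e), (f) of Definition~\ref{def:Wmatrix} to the geometric conditions (c), (d) of Definition~\ref{def:Fmatrix}: one must move cleanly between ``$\ker$ contains a prescribed small vector'', ``a column of the Gale dual is zero or proportional to another'', and membership in the rational versus the integral row lattice. It is precisely the no-cotorsion hypothesis (b) of the $W$-matrix that licenses the upgrade from the $\Q$-span to the $\Z$-lattice in part (1); once this is pinned down, the remaining verifications are routine consequences of Proposition~\ref{prop:Gale1} and Theorem~\ref{thm:Wpositiveness}.
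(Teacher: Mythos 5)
Your proof is correct and follows essentially the same route as the paper's: conditions on rank, cotorsion, $W$-positivity and $F$-completeness are delegated to Proposition~\ref{prop:Gale1} and Theorem~\ref{thm:Wpositiveness}, the columnwise conditions are translated through the kernel/orthogonality dictionary (where, as you rightly stress, the no-cotorsion hypothesis is what upgrades membership in the $\Q$-row-span to membership in ${\mathcal L}_r(A)$ — a step the paper leaves implicit), and the backward implication of (2) is obtained from part (1) together with Proposition~\ref{prop:quisopra}. No gaps; your write-up is in fact more explicit than the paper's on the translation between conditions (e), (f) of Definition~\ref{def:Wmatrix} and (c), (d) of Definition~\ref{def:Fmatrix}.
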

 \begin{proof} $1$): suppose that $A$ is a $W$--matrix; then ${\mathcal G}(A)$ is a $r\times (n+r)$ matrix of rank $r$: moreover it is $F$--com\-ple\-te by Theorem \ref{thm:Wpositiveness}. If ${\mathcal G}(A)$ had a zero column, then ${\mathcal L}_r(A)$ should contain a vector of the form $(0,\ldots 0,1,0,\ldots,0)$ and this would contradict condition (e) of the definition of $W$--matrix. If ${\mathcal G}(A)$ had two proportional columns, then $A$ should violate condition (f).  Thus ${\mathcal G}(A)$ is an $F$-matrix. Moreover ${\mathcal L}_r({\mathcal G}(A))$ does not have cotorsion in $\Z^{n+r}$ (because so happens for every matrix in the image of ${\mathcal G}$), therefore it is a $CF$-matrix.\\
 \noindent $2$) suppose that $A$ is a $F$--matrix; then ${\mathcal G}(A)$ is a $r\times (n+r)$ matrix of rank $r$:  it is $W$--positive by Theorem \ref{thm:Wpositiveness}; it cannot contain a zero column, because $A$ is $F$--complete; since $A$ is in the image of $\mathcal{G}$, $\mathcal{L}_r(\mathcal{G}(A))$ does not have cotorsion in $\Z^{n+r}$. Moreover $\mathcal{L}_r(\mathcal{G}(A))$ must satisfy conditions e) and f) of Definition \ref{def:Wmatrix} because $A$ satisfies conditions c) and d) of Definition \ref{def:Fmatrix}. Then  $\mathcal{G}(A)$ is a $W$-matrix. For the converse apply part $1$) of this proposition and Proposition \ref{prop:quisopra}.
\end{proof}

\subsection{$W$-positive and positive matrices}\label{ssez:positiva}
Let $Q$ be a $r\times(n+r)$ $W$-matrix. We exhibit an effective procedure allowing to calculate a positive  matrix $Q'\sim Q$. Put $V=\mathcal{G}(Q)$; it is $F$-complete by Theorem \ref{thm:Wpositiveness}; therefore for $i=1,\ldots,n+r$ there is a relation
$\sum_{j=1}^{n+r}c_{ij}\mathbf{v}_j=\mathbf{0}$ such that the $c_{ij}$ are non negative integers and $c_{ii}>0$.
Such relations  are computable by finding the components of $-\mathbf{v}_i$ w.r.t. each maximal system of linearly independent columns of $V$ different from $\mathbf{v}_i$, and looking for non negative solutions. By summing up over all $i$'s and dividing by the $\gcd$ of the resulting coefficients if necessary, we get a relation $\sum_{i=1}^{n+r}c_i\mathbf{v}_i$ such that $c_i>0$ for every $i$ and $\gcd(c_1,\ldots,c_{n+r})=1$. Since $\mathcal{L}_r(Q)$ has no cotorsion,
the vector $\mathbf{c}=(c_1,\ldots,c_{n+r})\in\mathcal{L}_r(Q)$. Let $\mathbf{r}_1,\ldots,\mathbf{r}_{r}$ be the rows of $Q$. Then we can find coprime $\lambda_1,\ldots,\lambda_r\in\Z$ such that $\sum_i\lambda_i\mathbf{r}_i=\mathbf{c}$. The $\HNF$ algorithm gives a matrix $\alpha\in\GL_r(\Z)$ such that $\alpha\cdot\begin{pmatrix}
\lambda_1\\ \vdots \\\lambda_r\end{pmatrix}=\begin{pmatrix}
1\\0\\\vdots \\0\end{pmatrix}$. Then
$$\mathbf{c}=(\lambda_1,\ldots,\lambda_r)\cdot Q=(1,0,\ldots,0)\cdot (\alpha^{-1})^T\cdot Q,$$
so that $Q''=(\alpha^{-1})^T\cdot Q$ is a matrix having $\mathbf{c}$ at the first row and such that $Q''\sim Q$. By adding a suitable multiple of $\mathbf{c}$ to the other rows of $Q''$ we get a positive matrix $Q'\sim Q$.
\subsection{Reduced $F$ and $W$--matrices}

\begin{definition}\label{def:F-red}
Let $V$ be a $F$-matrix, and let $d_i$ be the $\gcd$ of the elements on the $i$-th column, for $i=1,\ldots,n+r$.
\begin{itemize}
\item We say that $V$ is $F$-\emph{reduced} if $d_i=1$ for $i=1,..,n+r$;
\item The $F$-matrix $V^{F\text{-red}}=V\cdot \diag(d_1,\ldots,d_{n+r})^{-1}$ is called the $F$--\emph{reduction} of $V$.
\end{itemize}
\end{definition}

It is clear that the cones spanned by columns of $V$ and $V^{red}$ coincide.

\begin{definition}\label{def:W-red}\hfill
\begin{itemize}
\item  A $W$-matrix $Q$ is said to be \emph{$W$-reduced} if $\mathcal{G}(Q)$ is $F$-reduced.
\item The $W$-matrix $Q^{W\text{-red}}=\mathcal{G}(\mathcal{G}(Q)^{F\text{-red}})$ is called the $W$--\emph{reduction} of $Q$.
\end{itemize}
\end{definition}

 We shall omit the superscripts $F,W$ in reductions, when they will be clear from the context.\\
 The next results provides an intrinsic criterion to verify if a $W$-matrix is reduced, and, if it is not the case, to obtain its reduction.
 \begin{theorem}[Reduction Theorem]\label{thm:riduzione}
 Let $Q$ be a $r\times (n+r)$ $W$-matrix and $V=\mathcal{G}(Q)$. For $i=1,\ldots,n+r$ denote by $Q^i$ the  matrix obtained by removing from $Q$ the $i$-th column, by $d_i$ the $\gcd$ of the elements on the $i$-th column of $V$, and by $V_{(i)}$ the matrix obtained from $V$ by dividing by $d_i$ the $i$-th column. Then
 \begin{enumerate}
 \item $Q$ is reduced if and only if for every $i=1,\ldots,n+r$, $\mathcal{L}_r(Q^i)$ has no cotorsion in $\Z^{n+r-1}$.
 \item  $\Z^{n+r-1}/\mathcal{L}_r(Q^i)$ is a cyclic group of order $d_i$.
 \item Let $\tilde{Q}^i$ be the $\SNF$ of $Q^i$, and $\alpha_i\in\GL_r(\Z)$, $\beta_i\in \GL_{n+r-1}(\Z)$ be such that $\alpha_i Q^i\beta_i=\tilde{Q}^i$.
 Let $Q_{(i)}$ be the matrix obtained from $\alpha_iQ$ by multiplying the $i$-th column by $d_i$ and  dividing the $r$-th row by $d_i$.
 Then $Q_{(i)}=\mathcal{G}(V_{(i)})$.
 \item %\todo{!!! si capisce?}
 Call the matrix $Q_{(i)}$ obtained in $(3)$ the $i$-reduction of $Q$. Then $Q^{W\text{\rm{-red}}}$ can be obtained by iterating $i$-reductions, for $i=1,\ldots,n+r$.
 \end{enumerate}
 \end{theorem}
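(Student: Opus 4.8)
The plan is to obtain (1) and (2) as direct consequences of the $\Z$-linear Gale duality already packaged in Theorem~\ref{thm:Gale2}, then to build (3) on top of them through a double index computation, and finally to deduce (4) from (3) by induction. Throughout I use that, since $Q$ is a $W$-matrix, Proposition~\ref{prop:W-F-Gale}(1) makes $V=\mathcal{G}(Q)$ a $CF$-matrix (so $\mathcal{L}_c(V)=\Z^n$) and Proposition~\ref{prop:Gale1}(4) gives $\mathcal{G}(\mathcal{G}(Q))\sim Q$.

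I would treat (1) and (2) together. Applying Theorem~\ref{thm:Gale2} to the pair $(V,\mathcal{G}(V))$ with the singleton $I=\{i\}$, and noting that $\mathcal{G}(V)\sim Q$ forces $\mathcal{L}_r((\mathcal{G}(V))^{\{i\}})=\mathcal{L}_r(Q^i)$ while $V_{\{i\}}$ is the single column $\v_i$, I get the natural isomorphism
\[
\Z^{n+r-1}/\mathcal{L}_r(Q^i)\;\cong\;\mathcal{L}_c(V)/\mathcal{L}_c(V_{\{i\}})\;=\;\Z^n/\Z\,\v_i .
\]
Writing $\v_i=d_i\v_i'$ with $\v_i'$ primitive (the meaning of $d_i=\gcd$ of the $i$-th column of $V$) and completing $\v_i'$ to a basis of $\Z^n$ shows this group is $(\Z/d_i\Z)\oplus\Z^{n-1}$, whose torsion subgroup — i.e. the cotorsion of $\mathcal{L}_r(Q^i)$ — is cyclic of order $d_i$; this is (2). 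Statement (1) is then immediate: by Definition~\ref{def:W-red}, $Q$ is reduced iff $V=\mathcal{G}(Q)$ is $F$-reduced, i.e. iff all $d_i=1$, which by (2) is exactly the vanishing of every cotorsion.

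The core is (3). Since the cotorsion above is cyclic of order $d_i$, the invariant factors of $Q^i$ are $1,\dots,1,d_i$, so $\tilde Q^i=\SNF(Q^i)=\diag(1,\dots,1,d_i)$ padded with zero columns; hence the $r$-th row of $\alpha_i Q^i=\tilde Q^i\beta_i^{-1}$ is $d_i$ times a primitive vector, which is why after scaling column $i$ of $\alpha_i Q$ by $d_i$ the whole $r$-th row becomes divisible by $d_i$ and $Q_{(i)}$ is integral. Encoding the two operations as $Q_{(i)}=E\cdot\alpha_i\cdot Q\cdot D_i$, with $D_i=\diag(1,\dots,1,d_i,1,\dots,1)$ scaling column $i$ and $E=\diag(1,\dots,1,1/d_i)$ scaling row $r$, and using $V_{(i)}^T=D_i^{-1}V^T$ together with $QV^T=0$, I get $Q_{(i)}V_{(i)}^T=E\alpha_i QV^T=0$, so $\mathcal{L}_r(Q_{(i)})\subseteq\ker_{\Z}(V_{(i)})$ with both of rank $r$. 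To force equality I compare both against $D_i\mathcal{L}_r(Q)=\mathcal{L}_r(QD_i)$. Replacing the $r$-th basis vector of $\mathcal{L}_r(\alpha_iQD_i)=D_i\mathcal{L}_r(Q)$ by its $1/d_i$-multiple gives $[\mathcal{L}_r(Q_{(i)}):D_i\mathcal{L}_r(Q)]=d_i$. On the other hand $\ker_{\Z}(V_{(i)})$ is exactly the saturation of $D_i\mathcal{L}_r(Q)$, so $[\ker_{\Z}(V_{(i)}):D_i\mathcal{L}_r(Q)]=|\Tors(\Z^{n+r}/\mathcal{L}_r(QD_i))|$, which by the Elementary Divisor Theorem equals the $\gcd$ of the maximal minors of $QD_i$; since those minors are the $r\times r$ minors $m_J$ of $Q$ for $i\notin J$ and $d_im_J$ for $i\in J$, and the first family is precisely the maximal minors of $Q^i$ with $\gcd$ equal to $d_i$ by (2), this second index is $d_i$ as well. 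Comparing the two indices yields $\mathcal{L}_r(Q_{(i)})=\ker_{\Z}(V_{(i)})$, i.e. $Q_{(i)}=\mathcal{G}(V_{(i)})$.

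Finally, for (4) I would induct on the number of reduced columns. Each $V_{(i)}$ is again a $CF$-matrix (dividing one column only enlarges $\mathcal{L}_c$, already $\Z^n$), so $\mathcal{G}(Q_{(i)})\sim V_{(i)}$, and a subsequent $j$-reduction with $j\neq i$ sees the same column $\gcd$ $d_j$, the column divisions being independent. Iterating the $i$-reduction for $i=1,\dots,n+r$ thus yields $\mathcal{G}$ of the matrix obtained from $V$ by dividing every column by its $\gcd$, namely $\mathcal{G}(V^{F\text{-red}})=Q^{W\text{-red}}$. The main obstacle is the computation in (3): pinning down the saturation index $[\ker_{\Z}(V_{(i)}):D_i\mathcal{L}_r(Q)]$, whose clean value $d_i$ only surfaces after rewriting it as a $\gcd$ of maximal minors and feeding in the cotorsion order from (2); the accompanying bookkeeping that makes $Q_{(i)}$ integral — certifying that the $r$-th row is the correct one to divide — is the other delicate point.
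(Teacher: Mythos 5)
Your proposal is correct and follows the paper's skeleton almost exactly: (2) via Theorem~\ref{thm:Gale2} with $I=\{i\}$ giving $\Z^{n+r-1}/\mathcal{L}_r(Q^i)\cong\Z^n/\Z\v_i\cong\Z/d_i\Z\oplus\Z^{n-1}$, (1) as an immediate corollary, the factorization $Q_{(i)}=\gamma_i^{-1}\alpha_i Q\delta_i$ with $V_{(i)}=V\delta_i^{-1}$ forcing $Q_{(i)}V_{(i)}^T=0$ and the integrality of $Q_{(i)}$ from the invariant factors $1,\dots,1,d_i$ of $Q^i$, and (4) by iteration. The one place you genuinely diverge is the equality step $\mathcal{L}_r(Q_{(i)})=\ker(V_{(i)})$ in (3): you compare the two indices $[\mathcal{L}_r(Q_{(i)}):\mathcal{L}_r(QD_i)]$ and $[\ker(V_{(i)}):\mathcal{L}_r(QD_i)]$, evaluating the latter as the gcd of the maximal minors of $QD_i$ and feeding in part (2) to get $d_i$ on both sides. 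The paper instead observes in one line that the columns of $Q_{(i)}$ other than the $i$-th are exactly those of $\gamma_i^{-1}\alpha_iQ^i$, whose SNF is $(\mathbf{I}_r\,|\,\mathbf{0})$, so $\mathcal{L}_c(Q_{(i)})=\Z^r$ and hence $\mathcal{L}_r(Q_{(i)})$ has no cotorsion by Proposition~\ref{prop:Gale1}, which already identifies it with the saturated lattice $\ker(V_{(i)})$. Both arguments are valid; yours costs an extra application of the gcd-of-minors formula (Corollary~\ref{cor:minoricoprimi}) and a saturation-index identity, while the paper's exploits the SNF data you had already computed. Your added detail in (4) --- that $V_{(i)}$ remains a $CF$-matrix so $\mathcal{G}(Q_{(i)})\sim V_{(i)}$ and later columns keep their gcds --- is a worthwhile elaboration of the paper's terse ``(4) follows from (3)''.
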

 \begin{proof}
 Of course $(1)$ follows from $(2)$ and $(4)$ follows from $(3)$. In order to prove $(2)$, apply Theorem \ref{thm:Gale2} with $I=\{i\}$; then we see that there is an isomorphism
 $$\Z^{n+r-1}/\mathcal{L}_r(Q^i)\cong \mathcal{L}_c(V)/\mathcal{L}_c(\mathbf{v}_i)\cong \Z^n/\mathcal{L}_c(\mathbf{v}_i)\cong \Z/ d_i\Z\oplus \Z^{n-1}.$$
 Now we prove $(3)$; notice that, by $(2)$, the last row of $\alpha_iQ^i$ is divisible by $d_i$, so that $Q_{(i)}$ has integer entries. Set $\gamma_i=\diag(1,\ldots,1,d_i)\in \mathbf{M}_r(\Z)$ and $\delta_i=\diag(1,\ldots,1,d_i,1,\ldots,1)\in \mathbf{M}_{n+r}(\Z)$, with $d_i$ on the  $(i,i)$--place. Then $Q_{(i)}=\gamma_i^{-1}\alpha_iQ\delta_i$ and $V_{(i)}=V\delta_i^{-1}$, so that $Q_{(i)}\cdot {V_{(i)}}^T=0$. Moreover
 $$\mathcal{L}_c(Q_{(i)})\supseteq \mathcal{L}_c(\gamma_i^{-1}\alpha_iQ^i)=\Z^n \,$$
 since the $\SNF$ of $\gamma_i^{-1}\alpha_iQ^i$ is the identity matrix. Therefore $\mathcal{L}_r(Q_{(i)})$ has no cotorsion in $\Z^{n+r}$, so that $Q_{(i)}=\mathcal{G}(V_{(i)})$ by Proposition \ref{prop:Gale1} (2).
 \end{proof}

 An alternative reduction procedure has been described in \cite{Ahmadinezhad} to which the interested reader is referred.

 \begin{example}
Let $Q$ be he $2\times 4$ $W$-matrix
$$Q:=\begin{pmatrix} 1 & 2&0&0\\
0&0&3&5
 \end{pmatrix}$$
 Then the $HNF$ of $V=\mathcal{G}(Q)$ is
 $$V:=\begin{pmatrix} 2 & -1&0&0\\
 0&0&5&-3
  \end{pmatrix}$$

With the notations of Theorem \ref{thm:riduzione} we have $d_1=2$ and
$$Q^1= \begin{pmatrix}  2&0&0\\
0&3&5 \end{pmatrix}, \quad \tilde{Q}^1=\begin{pmatrix}  1&0&0\\
0&2&0 \end{pmatrix},\quad
\alpha_1=\begin{pmatrix}
1&1\\ -1&0
\end{pmatrix},\quad
\beta_1=\begin{pmatrix} 0&-1&0\\
-3&-6&-5\\
2&4&3 \end{pmatrix} $$
so that the $1$-reduction is given by
$$Q_{(1)}=\begin{pmatrix} 2&2&3&5\\
-1&-1&0&0 \end{pmatrix}, \quad \mathcal{G}(Q_{(1)})=\begin{pmatrix} 1&-1&0&0\\ 0&0 &5 &-3\end{pmatrix}=V_{(1)}.$$
By performing successively the $3$ and $4$-reductions we obtain
$$Q_{(1,3)}=\begin{pmatrix} 2&2&15&5\\
-1&-1&-6&-2 \end{pmatrix}, \quad \mathcal{G}(Q_{(1,3)})=\begin{pmatrix} 1&-1&0&0\\ 0&0 &1 &-3\end{pmatrix}=V_{(1,3)};$$
$$Q^{W\text{\rm{-red}}}=Q_{(1,3,4)}=\begin{pmatrix} 2&2&15&15\\
-1&-1&-7&-7 \end{pmatrix}\sim \begin{pmatrix} 1&1&0&0\\
0&0&1&1  \end{pmatrix};$$ $$\mathcal{G}(Q_{(1,3,4)})=\begin{pmatrix} 1&-1&0&0\\ 0&0 &1 &-1\end{pmatrix}=V_{(1,3,4)}=V^{F\text{-red}}.$$
 \end{example}

 \begin{remark}\label{rem:WPS} In the case $r=1$, Theorem \ref{thm:riduzione} gives the well known criterion for reducing weights in weighted projective spaces, see \cite[Definition 3]{RT-WPS} and references therein. In fact the matrices $\alpha_i$ are trivial in this case and the reduction process described in part (4) of Theorem \ref{thm:riduzione} amounts to multiply on the right  the weight matrix $Q=(q_0,\ldots, q_n)$ by the diagonal matrix $\frac 1 {\prod d_j}\diag(d_0,\ldots, d_n)$. \\ By \cite[Lemma 1(c)]{RT-WPS} $d_i=\gcd(q_0,\ldots,q_{i-1},q_{i+1},\ldots,q_n)$.\\ By \cite[Prop. 3(2)]{RT-WPS} $\frac {\prod d_j}{d_i}=\lcm(d_0,\ldots,d_{i-1},d_{i+1},\ldots,d_n)=:a_i$.
 \end{remark}

\subsubsection{$W$--positiveness and row echelon form}

\begin{theorem}\label{thm:REF}
Let $A=(a_{i,j})$ be a $d\times m$ $W$--positive matrix. Then there exist  $\alpha\in \GL_d(\Z)$, a permutation matrix $\beta \in \GL_m(\Z)$ and a positive
matrix $\underline{A}$ in row echelon form  such that $\alpha A \beta =\underline{A}$.
\end{theorem}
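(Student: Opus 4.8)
The plan is to reduce $A$ to a positive matrix of full row rank and then build the row echelon form by peeling off one pivot column at a time, inducting on the rank. First I would exploit $W$--positiveness directly: by definition $\mathcal{L}_r(A)$ has a positive basis, so there is $\alpha_0\in\GL_d(\Z)$ with $\alpha_0 A=\left(\begin{smallmatrix}P\\ \mathbf{0}\end{smallmatrix}\right)$, where $P$ is a positive $k\times m$ matrix of full row rank $k=\rk(A)$ and the last $d-k$ rows vanish. Those zero rows will sit at the bottom of the final matrix and may be set aside, so it suffices to treat $P$. I would then prove, by induction on $k$, the statement: \emph{a positive $k\times m$ matrix $B$ of rank $k$ can be carried by some $\gamma\in\GL_k(\Z)$ and some column permutation $\delta$ to a positive matrix in row echelon form.}

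For the inductive step, let $C$ be the cone generated by the columns of $B$; it is pointed and full dimensional, since it lies in $\R^k_{\geq 0}$ and has linear span $\R^k$. Choose an extreme ray of $C$ and let $\mathbf{g}\in\N^k$ be its primitive generator (a positive multiple of some column). The key point, isolated below, is to find $\gamma_1\in\GL_k(\Z)$ with $\gamma_1\mathbf{g}=\mathbf{e}_1$ and $\gamma_1 B\geq 0$. Granting this, the column of $\gamma_1 B$ proportional to $\mathbf{g}$ becomes $p\,\mathbf{e}_1$ with $p\geq 1$; permuting it to the front gives
$$\gamma_1 B\,\delta_1=\begin{pmatrix} p & \ast\\ \mathbf{0}_{k-1,1} & B'\end{pmatrix},$$
with $\ast\geq 0$ and $B'$ a positive $(k-1)\times(m-1)$ matrix of rank $k-1$. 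Applying the inductive hypothesis to $B'$ and lifting the resulting $\GL_{k-1}(\Z)$--matrix and column permutation by $\diag(1,\bullet)$ (so that the first row and the first column are fixed), the lower right block becomes a positive row echelon matrix while the pivot column $p\,\mathbf{e}_1$ and the positive first row are preserved; the pivot in column $1$ then lies strictly to the left of all pivots of $B'$, so the whole matrix is positive and in row echelon form.

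The crux is the following geometric lemma: \emph{for a pointed rational cone $C\subseteq\R^k$ with a prescribed extreme ray $\R_{\geq 0}\mathbf{g}$, $\mathbf{g}$ primitive, there is $\gamma\in\GL_k(\Z)$ with $\gamma\mathbf{g}=\mathbf{e}_1$ and $\gamma C\subseteq\R^k_{\geq 0}$.} I would prove this by induction on $k$. Pick first $\gamma_1\in\GL_k(\Z)$ with $\gamma_1\mathbf{g}=\mathbf{e}_1$, possible since $\mathbf{g}$ is primitive. The projection $\pi:\Z^k\to\Z^k/\Z\mathbf{e}_1\cong\Z^{k-1}$ forgetting the first coordinate sends $\gamma_1 C$ to a cone $\bar C$ which is pointed, precisely because extremality gives $\R\mathbf{g}\cap C=\R_{\geq 0}\mathbf{g}$. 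By induction $\bar C$ is carried into $\R^{k-1}_{\geq 0}$ by some $\bar\gamma\in\GL_{k-1}(\Z)$, and applying $\diag(1,\bar\gamma)$ makes all but the first coordinate of every column non-negative while fixing $\mathbf{e}_1$. A shear $\left(\begin{smallmatrix}1 & \mathbf{t}^T\\ \mathbf{0} & \mathbf{I}_{k-1}\end{smallmatrix}\right)$ with $\mathbf{t}\in\N^{k-1}$ large then fixes $\mathbf{e}_1$ and adds $\langle\mathbf{t},\,\cdot\,\rangle$ to the first coordinate; since the other coordinates are non-negative, a single large $\mathbf{t}$ forces the first coordinate of each of the finitely many columns to be $\geq 0$ as well (the columns whose remaining coordinates all vanish lie on $\R_{\geq 0}\mathbf{g}$ and already have non-negative first coordinate). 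The composite is the required $\gamma$.

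The main obstacle is exactly this geometric lemma, i.e.\ creating the pivot column $p\,\mathbf{e}_1$ without leaving the positive orthant: a naive Hermite/Euclid elimination on a single column would in general produce negative entries elsewhere. What makes it work is that $W$--positiveness forces the column cone to be pointed and unimodularly embeddable in the orthant, so one can single out a genuine extreme ray, rotate it onto the axis, and let the shear absorb the sign of the first coordinate. The remaining checks I expect to be routine in the write-up are that the last $d-k$ rows really can be cleared to the bottom, that $\rk(B')=k-1$, and that the inductive lift preserves both positivity and the pivot-increasing condition.
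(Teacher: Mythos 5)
Your proof takes a genuinely different route from the paper's, and (with one repair noted below) it works. The paper's argument is entirely matrix--theoretic: after replacing $A$ by a positive matrix, it sorts the columns so that the ratios $a_{d-1,j}/a_{d,j}$ are decreasing and applies a $2\times2$ B\'ezout matrix to the last two rows, checking by hand that this ordering keeps every entry non-negative while creating a zero in position $(d,1)$; a double induction (on $d$ to clear the first column, then on $m$) assembles the echelon form. You instead read everything through the convex geometry of the column cone: positivity makes it pointed, full row rank makes it full-dimensional, and the whole inductive step rests on the lemma that a pointed rational cone with a marked extreme ray can be carried by a unimodular matrix into $\R^k_{\geq 0}$ with that ray sent to $\R_{\geq 0}\mathbf{e}_1$. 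That lemma is true, and your peeling-off-a-pivot induction around it is sound, including the preliminary splitting $\alpha_0A=\left(\begin{smallmatrix}P\\ \mathbf{0}\end{smallmatrix}\right)$, which is legitimate because $\mathcal{L}_r(A)$ is free, so the surjection $\Z^d\to\mathcal{L}_r(A)$ splits. The one place where your justification is too weak as written is the pointedness of the projected cone $\bar C$: the property $\R\mathbf{g}\cap C=\R_{\geq 0}\mathbf{g}$ alone does not suffice (take $C=\R^2_{\geq 0}$ and $\mathbf{g}=(1,1)$: the property holds, $\mathbf{g}$ is not extreme, and the projection along $\mathbf{g}$ is all of $\R$). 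You must use that $\R_{\geq 0}\mathbf{g}$ is a \emph{face}, so that $\mathbf{x},\mathbf{y}\in C$ with $\mathbf{x}+\mathbf{y}\in\R_{\geq 0}\mathbf{g}$ forces $\mathbf{x},\mathbf{y}\in\R_{\geq 0}\mathbf{g}$ and hence $\pi(\mathbf{x})=\pi(\mathbf{y})=0$. Since you do hypothesize an extreme ray, this is a repairable imprecision rather than a gap. As for what each approach buys: the paper's proof is self-contained and directly algorithmic --- every step is an explicit integer matrix, in keeping with the computational aims of the paper --- while yours is conceptually shorter, explains \emph{why} the ratio-sorting trick works (it is implicitly selecting an extreme ray of a two-dimensional projected cone), and isolates a reusable statement about unimodular straightening of pointed cones, at the cost of importing the face and duality formalism for rational polyhedral cones.
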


\begin{proof} Up to the left multiplication by a matrix in $\GL_d(\Z)$ we can think of $A$ to be a positive matrix.\\
As a first step we show that:
\begin{itemize}
\item[$(i)$] if $A$ is a $d\times m$ positive matrix, with $d\geq 2$, then there exist $\alpha'\in \GL_d(\Z)$ and a permutation matrix $\beta' \in \GL_m(\Z)$ such that $\alpha' A\beta'$ is positive with a zero entry in the $(d,1)$--place.
\end{itemize}
If the $d$--th row of $A$ has some zero entry then we are done up to a permutation on columns.
 We can then assume that $A$ has a strictly positive $d$--th row. Let $\beta'\in\GL_m(\Z)$ be a permutation matrix such that $A\beta'=(a_{i,j})$ satisfies the condition
 \begin{equation}\label{ordine}
     \frac {a_{d-1,j}}{a_{d,j}}\geq \frac {a_{d-1,j+1}}{a_{d,j+1}}\ .
 \end{equation}
 Consider the last two rows of $A$, giving the submatrix
 $$\widetilde{A}:=\left ( \begin{array}{llll} a_{d-1,1}&a_{d-1,2} &\ldots &a_{d-1,m}\\
  a_{d,1}&a_{d,2} &\ldots &a_{d,m}
  \end{array}\right )$$
  Set $D=\gcd(a_{d-1,1},a_{d,1})$. The B\'ezout identity gives a matrix $$\widetilde{\alpha}:=\begin{pmatrix} x & y\\ -\frac{a_{d,1}} D & \frac{a_{d-1,1}} D\end{pmatrix} \in \SL_2(\Z)$$
  such that
  $$\widetilde{\alpha}\cdot\widetilde{A}=\left ( \begin{array}{llll} D&a'_{d-1,2} &\ldots &a'_{d-1,m}\\
  0&a'_{d,2} &\ldots &a'_{d,m}
  \end{array}\right )$$
  where, for $2\leq j\leq m$\ ,
  \begin{eqnarray*} a'_{d-1,j} &=& xa_{d-1,j}+ya_{d,j}\\
  a'_{d,j} &=& -\frac{a_{d,1}a_{d-1,j}} D+ \frac{a_{d-1,1}a_{d,j}} D
  \end{eqnarray*}
  Notice that (\ref{ordine}) gives $a'_{d,j}= \frac {a_{d,j}a_{d,1}} D\left (\frac{a_{d-1,1}}{a_{d,1}}-\frac {a_{d-1,j}}{a_{d,j}}\right )\geq 0$ and $a'_{d,j}=0$  if and only if $\frac{a_{d-1,j}}{a_{d,j}}=\frac{a_{d-1,1}}{a_{d,1}}$, since also $A\beta'$ has a strictly positive $d$--th row; in this case
  $$a'_{d-1,j}=a_{d,j}\left (x\frac{a_{d-1,j}}{a_{d,j}}+y\right )=a_{d,j}\left (x\frac{a_{d-1,1}}{a_{d,1}}+y\right )=\frac {a_{d,j}}{a_{d,1}}D > 0\ .$$
  Therefore $a'_{d-1,j}>0$ when $a'_{d,j}=0$. This means that, up to add to the $(d-1)$--st row a suitable multiple of the $d$--th row, which is up to the left multiplication of a suitable matrix $\widetilde{\alpha}'\in\GL_d(\Z)$, we can conclude the existence of a matrix $\alpha'=\widetilde{\alpha}'\cdot\begin{pmatrix} \mathbf{I}_{d-2} & \mathbf{0}\\ \mathbf{0} & \widetilde{\alpha} \end{pmatrix}\in\GL_d(\Z)$ such that $\alpha' A\beta'$ is a positive matrix having $0$ in the $(d,1)$--place, proving $(i)$.\\
Now we prove
\begin{itemize}
  \item[$(ii)$] there exists $\alpha\in \GL_d(\Z)$ and a permutation matrix $\beta \in \GL_m(\Z)$ such that $\alpha A\beta $ is positive and the first column is zero except possibly for its first entry.
 \end{itemize}
 We prove $(ii)$ by induction on $d$. The case $d=1$ is obvious.\\
 If $d>1$ we can apply $(i)$ in order to obtain
 the existence of a matrix $\alpha'\in\GL_d(\Z)$, and a permutation matrix $\beta'\in\GL_m(\Z)$ such that $A'=\alpha'\cdot A\cdot\beta'$ is a positive matrix having $0$ at place $(d,1)$. Due to relations (\ref{ordine}) and the structure of the matrix $\alpha'$, the entries $a'_{d,j}$ of the last row of $A'$ satisfy the following condition
$$
a'_{d,j}\quad\text{are}\quad\begin{array}{cc}
                  = 0 & \quad\text{for}\ 1\leq j\leq j_0 \\
                  >0 & \quad\text{for}\ j>j_0
                \end{array}
\quad\text{for some $j_0\geq 1$.}
$$
Consider now the $(d-1)\times j_0$ submatrix $\widehat{A}$ of $A'$ consisting of the first $d-1$ rows of $A'$ truncated at $j_0$. By induction there exist $\widehat{\alpha}\in \GL_{d-1}(\Z)$ and a permutation matrix $\widehat{\beta}\in\GL_{j_0}(\Z)$ such that $\widehat{\alpha}\cdot\widehat{A}\cdot\widehat{\beta}$ is positive and the first column is zero except possibly for its first entry.
By putting $\alpha''=\begin{pmatrix} \widehat{\alpha} & \mathbf{0}\\ \mathbf{0} & 1 \end{pmatrix}, \beta''=\begin{pmatrix} \widehat{\beta} & \mathbf{0}\\ \mathbf{0}& \mathbf{I}_{m-j_0}\end{pmatrix}$
we obtain that the first column of $A'':=\alpha''\cdot A'\cdot \beta''$ is zero except possibly for its first entry. Moreover the $d$-th row of $A''$ and the columns from 1 to $j_0$ of $A''$ are positive. The columns from $j_0+1$ to $m$ of $A''$ admit a strictly positive $(d-1)$--st entry. Then by summing a suitable multiple of the $(d-1)$-st row of $A''$ to the previous rows we get a positive matrix, meaning that there exists $\alpha'''\in \GL_{d}(\Z)$ such that $A''':=\alpha'''\cdot A''$ is positive. Then, by setting $\alpha=\alpha'''\alpha''\alpha'$ and $\beta=\beta'\beta''$, the matrix $A'''=\alpha\cdot A\cdot \beta$ satisfies condition $(ii)$.

\noindent

Now we prove the theorem by induction on $m$. If $m=1$ then $(ii)$ suffices to conclude the proof. By induction let us now assume that the theorem holds for a positive matrix with $m-1$ columns.  Let now $A$ be a $d\times m$ positive matrix. By $(ii)$ we may assume that the first column of $A$ is zero except possibly for its first entry $a_{1,1}$. We have now two possibilities.
\begin{itemize}
  \item $a_{1,1}=0$\ : let $A'$ be the submatrix of $A$ obtained by deleting the first column of $A$. By induction there exist $\alpha\in\GL_d(\Z)$ and a permutation matrix $\beta'\in\GL_{m-1}(\Z)$ such that $\alpha A'\beta'$ is a positive matrix in row echelon form. Then we are done by setting $\beta=\begin{pmatrix} 1 & \mathbf{0}\\ \mathbf{0} & \beta' \end{pmatrix}$.
  \item $a_{1,1}> 0$\ : let $A''$ be the submatrix of $A$ obtained by deleting the first row and the first column of $A$. By induction there exist $\alpha''\in\GL_{d-1}(\Z)$ and a permutation matrix $\beta''\in\GL_{m-1}(\Z)$ such that $\alpha'' A''\beta''$ is a positive  matrix in row echelon form. Then we end up the proof by setting
$$ \alpha=\left(
            \begin{array}{cc}
              1 & \mathbf{0} \\
              \mathbf{0}& \alpha'' \\
            \end{array}
          \right)\quad\text{and}\quad\left(
                                        \begin{array}{cc}
                                          1 & \mathbf{0} \\
                                          \mathbf{0} & \beta'' \\
                                        \end{array}
                                      \right)\ .
$$
\end{itemize}\end{proof}

\begin{remark}
The above proof exhibits a procedure giving a row echelon form of a given $W$-positive matrix $A$, provided that $A$  is already a positive matrix. When this is not the case, if $A$ is a $W$-matrix, then one can apply the procedure described in \S\  \ref{ssez:positiva} in order to find an equivalent positive matrix.

\end{remark}

\section{Proofs of geometrical results}\label{sez:proofs}
This section is devoted to proving results stated in \S\,\ref{sez:motivazioni}; let $X, \Sigma, V, Q$  be as defined therein.

\begin{remarks} \label{rem:no-torsione}\hfill
\begin{itemize}
 \item[1.] the fan matrix $V$ associated to $\Sigma$ is a \emph{reduced $F$--matrix} in the sense of definitions \ref{def:Fmatrix} and \ref{def:F-red}.
 \item[2.] The weight matrix  $Q$ is a \emph{ $W$-matrix} in the sense of Definition \ref{def:Wmatrix}.
\item[3.]  In general the sequence (\ref{HomZ-div-sequence}) is not exact on the right. Anyway, if the divisor class group $\Cl(X)$ is a free abelian group, then (\ref{div-sequence}) is a \emph{splitting} exact sequence implying the right exactness in (\ref{HomZ-div-sequence}). In this case $\ker(div^{\vee})$ is a co-torsion free subgroup meaning that the $\HNF$ of $Q^T$ is given by $\left(
                                                                 \begin{array}{c}
                                                                   {\bf I}_r \\
                                                                   \mathbf{0}_{n,r} \\
                                                                 \end{array}
                                                               \right)
  $.

  \end{itemize}
\end{remarks}

\begin{proposition}\label{prop:weight}
Let us fix $\Z$-bases of $M$ and $\mathcal{W}_T(X)$  in the sequence $(\ref{div-sequence})$. Then:
\begin{enumerate}
    \item a weight matrix $Q$ is a Gale dual of the fan matrix $V$ whose transpose $V^T$ is the representative matrix of $div$, i.e. $Q=\G(V)$,
    \item a weight matrix $Q$ is a $W$-matrix in the sense of Definition \ref{def:W-red},
  \item $Q$ is a $W$--reduced matrix if $V$ is a $F$--reduced matrix,
  \item a Gale dual $\G(Q)$ is a $CF$--matrix.
\end{enumerate}
\end{proposition}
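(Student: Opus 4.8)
The plan is to prove part (1) first, identifying the weight matrix $Q$ with the Gale dual $\G(V)$; once this is done, parts (2), (3) and (4) follow formally from the $F$/$W$--matrix dictionary of \S\ref{sez:GaleDuality} together with Remark \ref{rem:no-torsione}. So the real content sits in part (1), which is a bookkeeping argument about transposes and the exactness of the dualized divisor sequence.

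For part (1), I would work in the fixed $\Z$--bases of $M$ and $\mathcal{W}_T(X)$, so that $div$ is represented by $V^T$; since $X$ is complete, $\Si(1)$ spans $N_\R$ and hence $V$ has rank $n$, so $\G(V)$ is defined. Dualizing (\ref{div-sequence}) gives (\ref{HomZ-div-sequence}), in which $div^\vee$ is represented by the transpose of $V^T$, i.e.\ by $V$ itself, now read as a map $\Z^{n+r}\to N=\Z^n$. Left--exactness of $\Hom(-,\Z)$ makes (\ref{HomZ-div-sequence}) exact at its middle term, so that $\im(d^\vee)=\ker(div^\vee)=\ker(V)$. As $Q^T$ is a representative matrix of $d^\vee$, its columns are the images under $d^\vee$ of a basis of the free group $\Hom(\Cl(X),\Z)$ and therefore generate $\im(d^\vee)$; hence $\mathcal{L}_r(Q)=\mathcal{L}_c(Q^T)=\ker(V)$, which is exactly the defining property of $\G(V)$. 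Since both matrices are determined only up to left multiplication by $\GL_r(\Z)$, this yields $Q=\G(V)$.

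With part (1) in hand, parts (2) and (4) are immediate. By Remark \ref{rem:no-torsione}.1 the matrix $V$ is an $F$--matrix, so Proposition \ref{prop:W-F-Gale}(2) shows that $Q=\G(V)$ is a $W$--matrix, giving (2); applying Proposition \ref{prop:W-F-Gale}(1) to the $W$--matrix $Q$ then makes $\G(Q)$ a $CF$--matrix, giving (4) (equivalently, (4) follows from Proposition \ref{prop:quisopra} applied to the $F$--matrix $V$, since $\G(Q)=\G(\G(V))$). For part (3) I would unwind Definition \ref{def:W-red}: $Q$ is $W$--reduced precisely when $\G(Q)=\G(\G(V))$ is $F$--reduced, which makes sense because $\G(\G(V))$ is a $CF$--matrix, hence an $F$--matrix. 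By Proposition \ref{prop:Gale1}(3) there is $\alpha\in\mathbf{M}_n(\Z)\cap\GL_n(\Q)$ with $V=\alpha\cdot\G(\G(V))$, so each column $\v_i$ of $V$ is $\alpha$ times the corresponding column $\w_i$ of $\G(\G(V))$. If an integer $d$ divided every entry of $\w_i$, it would divide every entry of $\v_i=\alpha\w_i$; since $V$ is $F$--reduced this forces $d=1$, so $\G(\G(V))$ is $F$--reduced and $Q$ is $W$--reduced. (I would also record that $F$--reducedness is insensitive to the $\GL_n(\Z)$--ambiguity of $\G$, since unimodular maps preserve the $\gcd$ of the entries of a column.)

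The only genuinely delicate step is part (1). The care required there is entirely in matching transposes — verifying that $div^\vee$ is represented by $V$ and not by some other avatar of $V^T$ — and in confirming that the columns of $Q^T$ generate the \emph{whole} kernel $\ker(V)$ rather than a proper sublattice; the latter is handed to us for free by the exactness of (\ref{HomZ-div-sequence}) at the middle term, which identifies $\im(d^\vee)$ with $\ker(V)$ on the nose. After part (1), the remaining parts are purely formal consequences of the results already proved in \S\ref{sez:GaleDuality}.
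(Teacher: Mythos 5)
Your proposal is correct and follows essentially the same route as the paper, whose proof is just a one-line appeal to the definition of Gale duality for (1)--(3) and to Proposition \ref{prop:quisopra} for (4); you have simply filled in the details (exactness of the dualized sequence for (1), Proposition \ref{prop:W-F-Gale} and Remark \ref{rem:no-torsione} for (2), the $F$--reducedness of $\G(\G(V))$ via Proposition \ref{prop:Gale1}(3) for (3)). All the steps check out.
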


\begin{proof} (1), (2) and (3) follow immediately by the definition of Gale duality, given in \ref{ssez:Gale}, and the Definition \ref{def:W-red}. (4) is a direct consequence of the Proposition \ref{prop:quisopra}.
\end{proof}

The following proposition provides a generalization of \cite[Proposition 5]{RT-WPS}.

\begin{proposition}\label{prop:QdaVeviceversa} Given a
fan matrix $V$ of $X$ then $Q=\G(V)$ is a reduced weight matrix of $X$ which can be obtained by the last $r$ rows of a matrix $U_V\in GL_{n+r}(\Z)$ such that $U_V\cdot V^T$ is a $\HNF$ matrix. Conversely, if $Q$ is a reduced weight matrix of $X$ and the divisor class group $\Cl(X)$ is a free abelian group then $V=\G(Q)$ is a $CF$--matrix which is a  fan matrix of $X$, given by the last $n$ rows of a matrix $U_Q\in GL_{n+r}(\Z)$ such that $U_Q\cdot Q^T$ is a $\HNF$ matrix.
\end{proposition}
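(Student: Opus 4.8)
The plan is to read off both matrices as blocks of the unimodular switching matrix produced by the Hermite algorithm, and to identify these blocks with Gale duals via Proposition \ref{prop:HNFuno}(2).

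For the first assertion I would start from the fact (Remark \ref{rem:no-torsione}.1) that a fan matrix $V$ of $X$ is a reduced $F$--matrix, so that $\rk(V)=n$. The Hermite normal form $U_V\cdot V^T$ of the $(n+r)\times n$ matrix $V^T$ then has $n$ nonzero rows on top and $r$ zero rows at the bottom, since $\rk(V^T)=n$; by Proposition \ref{prop:HNFuno}(2) the last $r$ rows of $U_V\in\GL_{n+r}(\Z)$, i.e. those matching the zero rows of the Hermite form, constitute a $\Z$--basis of $\ker(V)$. Because $\mathcal{L}_r(\mathcal{G}(V))=\ker(V)$ by definition, these $r$ rows are a representative of $Q=\mathcal{G}(V)$. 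That this $Q$ is actually a \emph{reduced} weight matrix is then exactly Proposition \ref{prop:weight}: $\mathcal{G}(V)$ is a weight matrix, and it is $W$--reduced because $V$ is $F$--reduced.

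For the converse I would run the identical argument with the roles of $V$ and $Q$ exchanged. Since $Q$ is a weight matrix it is a $W$--matrix (Remark \ref{rem:no-torsione}.2), so $\rk(Q)=r$; the Hermite form $U_Q\cdot Q^T$ of the $(n+r)\times r$ matrix $Q^T$ has $r$ nonzero rows and $n$ zero rows, and Proposition \ref{prop:HNFuno}(2) identifies the last $n$ rows of $U_Q$ with a $\Z$--basis of $\ker(Q)=\mathcal{L}_r(\mathcal{G}(Q))$. Thus $V:=\mathcal{G}(Q)$ is given by these last $n$ rows, and it is a $CF$--matrix by Proposition \ref{prop:W-F-Gale}(1).

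The delicate point --- and the only place where the hypothesis that $\Cl(X)$ be free abelian enters --- is to check that this $CF$--matrix $V=\mathcal{G}(Q)$ is genuinely a fan matrix of $X$, not merely of some auxiliary variety. Let $V_0$ be the reduced fan matrix of $X$, so that $Q=\mathcal{G}(V_0)$ and hence $V=\mathcal{G}(\mathcal{G}(V_0))$. Freeness of $\Cl(X)$ means $\Tors(\Cl(X))=0$, which by Proposition \ref{prop:PWS} forces $\mathcal{L}_c(V_0)=\Z^n$; then Proposition \ref{prop:Gale1}(2) shows that $\mathcal{L}_r(V_0)$ has no cotorsion in $\Z^{n+r}$, and Proposition \ref{prop:Gale1}(4) yields $V=\mathcal{G}(\mathcal{G}(V_0))\sim V_0$. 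Since fan matrices are determined up to the left $\GL_n(\Z)$--action (a change of basis of $N$) and up to column permutations, the relation $V\sim V_0$ guarantees that $V$ is again a fan matrix of $X$. I expect this last identification to be the main obstacle: without torsion--freeness the double Gale dual $\mathcal{G}(\mathcal{G}(V_0))$ only recovers the saturation of $\mathcal{L}_c(V_0)$, so $V$ would be the fan matrix of the PWS covering $X$ rather than of $X$ itself --- precisely the phenomenon underlying the notion of a fake PWS.
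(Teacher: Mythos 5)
Your proposal is correct and follows essentially the same route as the paper: both directions rest on Proposition \ref{prop:HNFuno}(2) to read off a basis of $\ker(V)$ (resp. $\ker(Q)$) from the bottom rows of the switching matrix, with reducedness and the $CF$ property supplied by Proposition \ref{prop:weight}, and with the identification $\mathcal{G}(\mathcal{G}(V_0))\sim V_0$ under the torsion--freeness hypothesis handled via Proposition \ref{prop:Gale1}(4), exactly as in the paper's proof. Your closing remark correctly pinpoints why freeness of $\Cl(X)$ is indispensable in the converse.
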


\begin{proof} The first part of the statement follows by applying point (2) in Proposition \ref{prop:HNFuno} to the morphism $div^{\vee}$ in the exact sequence (\ref{HomZ-div-sequence}), whose representative matrix is $V$. Since $V$ is $F$--reduced then $Q=\G(V)$ is $W$--reduced by definition.

\noindent For the converse, if $\Cl(X)$ does not admit any torsion subgroup, then Remark \ref{rem:no-torsione}.3 applies. Again Proposition \ref{prop:HNFuno}(2) applied to the morphism $d$ in (13), whose representative matrix is $Q$, ends up the proof. In particular $V=\G(Q)$  is a $CF$--matrix (see the previous Proposition \ref{prop:weight} (4)) and it turns out to be a fan matrix of $X$: in fact, by Proposition \ref{prop:Gale1} (4), $V=\G(Q)=\G(\G(V'))=V'$ where $V'$ is the fan matrix of $X$ whose transposed matrix is the representative matrix of the morphism $div$. Moreover $Q=\G(V')$ is $W$--reduced if $V'$ is $F$--reduced, by definition. Then also $V$ is $F$--reduced.
\end{proof}

\subsection{Proof of Theorem \ref{thm:pi11=Tors}}\label{ssez:dimostrazione di pi11=Tors}
The exact sequence (\ref{div-sequence}) gives that
\begin{equation*}
    \Cl(X)\cong \left. \mathcal{W}_T(X)\right/\im (div)\cong \Z^{n+r}/\mathcal{L}_r(V)
\end{equation*}
so that $$\Tors(\Cl(X))\cong\Tors(\Z^{n+r}/\mathcal{L}_r(V) )\cong \Tors(\Z^{n}/\mathcal{L}_c(V))\cong  \Z^n/\mathcal{L}_r(T_n).$$
 The fact that $\Si$ is simplicial ensures that $N_{\Si(1)}$ is still a full sublattice of $N$,  and the rows of $T_n$ give a basis of $N_{\Sigma(1)}$. Hence  $N/N_{\Sigma(1)}\cong \Z^n/ \mathcal{L}_r(T_n)$.

\subsection{Proof of Proposition \ref{prop:PWS}}\label{ssez:dimostrazione di PWS} The equivalences  $(1)\Leftrightarrow (2)\Leftrightarrow (3)$ follow from Theorem \ref{thm:pi11=Tors} and its proof.\
The equivalence $(3)\Leftrightarrow (4)$ is a consequence of Corollary \ref{cor:minoricoprimi} below.

\begin{lemma}\label{prop:mcdminori} Let $A$ be a $n\times (n+r)$ matrix of rank $n$ with integral coefficients. Let $d_A$ be the $\gcd$ of all minors of order $n$ of $A$.  Let $\beta\in\GL_{n+r}(\Z)$ and put $A'=A\beta$. Then $d_{A'}=d_A$.
\end{lemma}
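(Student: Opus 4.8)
The plan is to reduce the statement to the classical Cauchy--Binet formula for maximal minors, which expresses every order-$n$ minor of $A'=A\beta$ as an integer linear combination of the order-$n$ minors of $A$. First I would fix notation: for an $n$-element subset $J\subseteq\{1,\ldots,n+r\}$, write $(A')_J$ for the $n\times n$ submatrix of $A'$ on the columns indexed by $J$, write $A_I$ for the $n\times n$ submatrix of $A$ on the columns indexed by an $n$-element subset $I$, and write $\beta^I_J$ for the $n\times n$ submatrix of $\beta$ with rows indexed by $I$ and columns indexed by $J$. Since the $j$-th column of $A'=A\beta$ is $A$ applied to the $j$-th column of $\beta$, selecting the columns in $J$ gives $(A')_J=A\cdot\beta_J$, where $\beta_J$ denotes the $(n+r)\times n$ block of $\beta$ on the columns in $J$.

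Then Cauchy--Binet applied to the product of the $n\times(n+r)$ matrix $A$ with the $(n+r)\times n$ matrix $\beta_J$ yields
$$\det\bigl((A')_J\bigr)=\det\bigl(A\,\beta_J\bigr)=\sum_{|I|=n}\det(A_I)\,\det(\beta^I_J),$$
the sum running over all $n$-element subsets $I\subseteq\{1,\ldots,n+r\}$. The coefficients $\det(\beta^I_J)$ are integers because $\beta$ has integer entries, so each order-$n$ minor of $A'$ is an integer combination of order-$n$ minors of $A$. Consequently $d_A$ divides every order-$n$ minor of $A'$, and hence $d_A\mid d_{A'}$.

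Finally, I would obtain the reverse divisibility by symmetry: as $\beta\in\GL_{n+r}(\Z)$ we also have $\beta^{-1}\in\GL_{n+r}(\Z)$, and $A=A'\beta^{-1}$. Rerunning the identical Cauchy--Binet argument on $A=A'\beta^{-1}$ gives $d_{A'}\mid d_A$, whence $d_A=d_{A'}$. Note that $\rk(A)=n$ forces $d_A\neq 0$ (and likewise $d_{A'}\neq 0$, since $\beta$ preserves rank), so these are genuine positive integers and the two-sided divisibility is an honest equality. I expect no real obstacle here; the only point deserving care is the Cauchy--Binet bookkeeping, namely verifying the identification $(A')_J=A\beta_J$ and confirming that the transition coefficients $\det(\beta^I_J)$ are honest integers so that the divisibility $d_A\mid\det((A')_J)$ follows immediately. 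An alternative route would factor $\beta$ into elementary integer matrices and track the gcd through column permutations, sign changes, and integer column additions, but this is more tedious than the single Cauchy--Binet identity above.
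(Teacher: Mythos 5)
Your proof is correct, but it takes a genuinely different route from the paper's. You invoke the Cauchy--Binet formula to express each maximal minor of $A'=A\beta$ as an integer linear combination $\sum_{|I|=n}\det(A_I)\det(\beta^I_J)$ of the maximal minors of $A$, which gives $d_A\mid d_{A'}$ in one stroke, and then you get the reverse divisibility for free from $A=A'\beta^{-1}$; together with $d_A,d_{A'}>0$ (guaranteed by $\rk(A)=n$) this yields equality. The paper instead follows exactly the ``alternative route'' you mention at the end: it reduces to a single step of Gauss $\Z$-reduction on columns (implicitly using that $\GL_{n+r}(\Z)$ is generated by elementary matrices), observes that column swaps and sign changes only permute the minors up to sign, and that adding $b$ times one column to another replaces a minor $a_1$ by $\pm a_1\pm b a'_i$ where $a'_i$ is (up to sign) another minor, so the gcd of the full list is unchanged at each step. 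Your approach buys a shorter, cleaner argument that avoids both the case analysis on which columns a given submatrix contains and the unstated generation fact for $\GL_{n+r}(\Z)$; the paper's approach is more elementary in that it needs nothing beyond multilinearity of the determinant. Both are complete and valid.
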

\begin{proof} Let $a_1,\ldots,a_k$ be the sequence of minors of $A$ of order $n$, and $a'_1,\ldots,a'_k$ be the corresponding sequence of minors of $A'$. We can assume that $A'$ is obtained  by applying  a step of Gauss $\Z$-reduction on the columns of $A$.
It is clear that interchanging two columns or changing the sign of a column simply modifies some signs in the sequence of minors, so that $d_{A'}=d_A$ in these cases.
Assume that $A'$ is obtained from $A$ by adding to a column (say $\mathbf{c_1}$) a multiple of another column (say $\mathbf{c_2}$), so that $\mathbf{c'_1}=\mathbf{c_1}+ b\mathbf{c_2}$ for some $b\in\Z$. Let $B$ be a square $n\times n$ submatrix of $A$, and $B'$ be the corresponding submatrix of $A'$. Suppose that $a_1=\det(B)$. Then: if  either $\mathbf{c_1}$ is not a column of $B$ or $\mathbf{c_2}$ is  a column of $B$ then $\det(B)=\det(B')$. If $\mathbf{c_1}$ is  a column of $B$ and $\mathbf{c_2}$ is  not a column of $B$ then $\det(B')=\det(B)+b\det(B'')$, where $B''$ is obtained from $B$ by replacing $\mathbf{c_1}$ by $\mathbf{c_2}$. Then $\det(B'')=\pm a_i=\pm a'_i $ for some $i\not=1$ and $a'_1=\pm a_1\pm ba'_i$ so that $\gcd(a_1,\ldots,a_k)=\gcd(a'_1,\ldots,a'_k)$.
\end{proof}

\begin{corollary}\label{cor:minoricoprimi} Let $A, d_A$ be as in Lemma \ref{prop:mcdminori}. Then
$$d_A=\left |\Tors\left (\Z^{n+r}/\mathcal{L}_r(A)\right)\right |=\left |\Z^{n}/\mathcal{L}_c(A)\right |.$$
In particular $\mathcal{L}_c(A)$ has no cotorsion in $\Z^{n}$ if and only if the minors of $A$ of order $n$ are coprime.
\end{corollary}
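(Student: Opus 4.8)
The plan is to reduce everything to the Smith normal form of $A$ and then to verify that each of the three quantities in the statement is invariant under the unimodular left and right multiplications that implement this reduction. By Theorem \ref{teo:SNF} there exist $\alpha\in\GL_n(\Z)$ and $\beta\in\GL_{n+r}(\Z)$ such that $S:=\alpha A\beta$ is in $\SNF$; since $\rk(A)=n$, the matrix $S$ has exactly $n$ nonzero diagonal entries $c_1,\ldots,c_n$ with $c_i\mid c_{i+1}$, and all other entries (in particular the whole last $r$ columns) vanish.

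First I would compute the three quantities directly for $S$. The only nonzero $n\times n$ minor of $S$ is the one formed by the first $n$ columns, equal to $c_1\cdots c_n$ (any other choice of $n$ columns picks up one of the zero columns), so $d_S=c_1\cdots c_n$. The column lattice is $\mathcal{L}_c(S)=\bigoplus_{i=1}^n c_i\Z\e_i\subseteq\Z^n$, whence $\Z^n/\mathcal{L}_c(S)\cong\bigoplus_{i=1}^n\Z/c_i\Z$ has order $\prod_i c_i$. The row lattice is the same combination $\bigoplus_{i=1}^n c_i\Z\e_i$ but now sitting inside $\Z^{n+r}$, so $\Z^{n+r}/\mathcal{L}_r(S)\cong\bigoplus_{i=1}^n\Z/c_i\Z\oplus\Z^r$, whose torsion subgroup again has order $\prod_i c_i$. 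Thus all three numbers coincide for $S$.

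Next I would check that each quantity is unchanged under $A\mapsto\alpha A$ and $A\mapsto A\beta$. For $d$: right multiplication by $\beta$ preserves the gcd of maximal minors by Lemma \ref{prop:mcdminori}, and left multiplication by $\alpha$ multiplies every $n\times n$ minor by $\det(\alpha)=\pm1$, so $d_{\alpha A}=d_A$; hence $d_A=d_S$. For the column quotient: unimodular column operations preserve the column span, so $\mathcal{L}_c(A\beta)=\mathcal{L}_c(A)$, while $\mathcal{L}_c(\alpha A)=\alpha\,\mathcal{L}_c(A)$ gives $\Z^n/\mathcal{L}_c(\alpha A)\cong\Z^n/\mathcal{L}_c(A)$ via the automorphism $\alpha$. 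For the row quotient: $\mathcal{L}_r(\alpha A)=\mathcal{L}_r(A)$ and $\mathcal{L}_r(A\beta)=\mathcal{L}_r(A)\beta$ give $\Z^{n+r}/\mathcal{L}_r(A\beta)\cong\Z^{n+r}/\mathcal{L}_r(A)$ via $\beta$. Combining these invariances with the computation for $S$ yields $d_A=|\Z^n/\mathcal{L}_c(A)|=|\Tors(\Z^{n+r}/\mathcal{L}_r(A))|=\prod_i c_i$. (The middle equality is also precisely Proposition \ref{prop:Gale1}(2), once one observes that $\Z^n/\mathcal{L}_c(A)$ is already finite, hence equal to its own torsion.)

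Finally, the ``in particular'' is immediate: since $\mathcal{L}_c(A)$ has full rank $n$ in $\Z^n$, the quotient $\Z^n/\mathcal{L}_c(A)$ is finite, so $\mathcal{L}_c(A)$ has no cotorsion exactly when this quotient is trivial, i.e.\ when $|\Z^n/\mathcal{L}_c(A)|=d_A=1$, which is the coprimality of the order-$n$ minors. I do not expect a genuine obstacle in this argument; the only points requiring care are cleanly separating the torsion from the free rank-$r$ summand in the row quotient, and supplying the left-multiplication invariance of $d_A$ (which Lemma \ref{prop:mcdminori} does not itself assert), both handled above.
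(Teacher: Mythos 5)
Your proof is correct, but it routes through the Smith normal form where the paper routes through the Hermite normal form. The paper's proof sets $H=\HNF(A^T)=U\cdot A^T$, so that $H^T=A\cdot U^T$ is obtained from $A$ by a \emph{right} unimodular multiplication only; Lemma \ref{prop:mcdminori} then gives $d_A=d_{H^T}$ directly, and since $H^T=\left(T_n\ \vert\ \mathbf{0}\right)$ the unique nonzero maximal minor is $\det(T_n)=[\Z^n:\mathcal{L}_c(A)]$ (the rows of $T_n$ being a basis of $\mathcal{L}_c(A)$ by Proposition \ref{prop:HNFuno}), which finishes in two lines; the identification with $\left|\Tors\left(\Z^{n+r}/\mathcal{L}_r(A)\right)\right|$ is left to Proposition \ref{prop:Gale1}(2). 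Your SNF version diagonalizes completely, which costs you an extra verification the paper never needs --- the invariance of $d_A$ under \emph{left} multiplication by $\GL_n(\Z)$, which Lemma \ref{prop:mcdminori} does not assert; you correctly noticed this and supplied it (each maximal minor gets multiplied by $\det\alpha=\pm 1$). In exchange you get all three quantities computed uniformly as $\prod_i c_i$ in a single normal form, including the torsion of the row quotient, so your argument is self-contained where the paper's printed proof silently delegates that equality to Proposition \ref{prop:Gale1}(2). Both approaches are sound; the paper's is more economical, yours is more symmetric and transparent.
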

\begin{proof} Let $H=\HNF(A^T)=U\cdot A^T=\begin{pmatrix} T_n\\ \mathbf{0}_{r,n}\end{pmatrix}$. Then $H^T=A\cdot U^T$ and, by Lemma \ref{prop:mcdminori}, $d_A=d_{H^T}$. But $d_{H^T} =\det(T_n)= \left |\Z^{n}/\mathcal{L}_c(A)\right | $.
\end{proof}

\begin{proof}[Proof of Theorem \ref{thm:generazione}] (1)  $X$ is a PWS, meaning that $\Cl(X)\cong\Z^r$, by Proposition \ref{prop:PWS} and Theorem \ref{thm:pi11=Tors}. Recalling Remark \ref{rem:no-torsione}.3, we can then write
 \begin{equation*}
    \left(
       \begin{array}{c}
         \mathbf{I}_r \\
         \mathbf{0}_{n,r} \\
       \end{array}
     \right)=U_Q\cdot Q^T \ \Rightarrow\ Q\cdot U_Q^T =
                                                         \begin{pmatrix}
   \mathbf{I}_r  &\mathbf{0}_{r,n}
   \end{pmatrix}
 \end{equation*}
Then (\ref{generazione}) follows by recalling that $Q$ is a representative matrix of the morphism $d:\mathcal{W}_T(X)\to \Cl(X)$.

(2) Recall that, for any $k\in\N$, the Weil divisor $L=\sum_{j=1}^{n+r}a_jD_j$ is a Cartier divisor if it is locally principal, which is
\begin{equation}\label{cartier}
    \forall\,I\subset\{1,\ldots,n+r\}:\left\langle V^I\right\rangle\in\Si(n)\quad\exists\,\mathbf{m}_I\in M : \forall\,j\not\in I\ \mathbf{m}_I\cdot\v_j=a_j\,.
\end{equation}
For $I\subset\{1,\ldots,n+r\}$  define
$$\mathcal{P}^I=\{L=\sum_{j=1}^{n+r}a_jD_j\in \mathcal{W}_T(X)\ |\ \exists\,\mathbf{m}\in M : \forall\,j\not\in I\ \mathbf{m}\cdot\v_j=a_j\}.$$
Then $\mathcal{P}^I$ contains $\mathrm{Im}(div)$ and a $\Z$-basis of $\mathcal{P}^I$ is given by
$$\{D_j, j\in I\}\cup\{\sum_{k=1}^{n+r}v_{ik}D_k, i=1,\ldots ,n\},$$
where $\{v_{ik}\}$ is the $i$-th entry of $\mathbf{v}_k$. Let $\mathcal{B}_I=d(\mathcal{P}^I)$; then a basis of $\mathcal{B}_I$ in $\Cl(X)$ is  $$\{d(D_j), j\in I\}.$$ Through the fixed identification of $\Cl(X)$ with $\Z^r$, each $d(D_j)$ corresponds to the $j$-th column of the matrix $Q$. Therefore
$$\Pic(X)=\bigcap_{I\in\mathcal{I}_{\Si}} \mathcal{B}_I= \bigcap_{I\in\mathcal{I}_{\Si}} \mathcal{L}_c(Q_I).$$
(3) Observe that
$$Q\cdot C^T= Q\cdot U_Q^T\cdot \begin{pmatrix}B^T &
 \mathbf{0}_{r,n}\\  \mathbf{0}_{n,r}& \mathbf{I}_{n}\end{pmatrix}=
 \begin{pmatrix}
 \mathbf{I}_{r} & \mathbf{0}_{r,n}\end{pmatrix}\cdot  \begin{pmatrix}B^T & \mathbf{0}_{r,n}\\  \mathbf{0}_{n,r}& \mathbf{I}_{n}\end{pmatrix}=\begin{pmatrix} B^T & \mathbf{0}_{r,n}\end{pmatrix},$$
 so that $\mathcal{L}_r(C)\subseteq \mathcal{C}_T(X)$; moreover $$[\mathcal{W}_T(X): \mathcal{L}_r(C)]=\det(B)=[\Cl(X):\Pic(X)]= [\mathcal{W}_T(X): \mathcal{C}_T(X)];$$
 therefore $\mathcal{L}_r(C)= \mathcal{C}_T(X)$.\\
 (4) is a direct consequence of (2).
   \end{proof}

   \section{Acknowledgements} We would like to thank: Cinzia Casagrande for having pointed out to us references \cite{Ahmadinezhad} and \cite{BCS} of which we were not aware; Antonella Grassi for stimulating conversations  held in Turin during her last visit; the anonymous referee for his helpful suggestions and improvement of the paper.

\section*{References}

\end{document}